\documentclass[12pt,a4paper]{article}

\usepackage{graphicx}
\usepackage{subfig}
\usepackage{amsmath,amsthm,amssymb}
\usepackage{esint}
\usepackage{mathrsfs}
\usepackage{appendix}

\usepackage{hyperref}

\usepackage[left=2.2cm,right=2.2cm,top=3cm,bottom=3.5cm]{geometry}

\usepackage{color}
\usepackage{array}
\usepackage{url}

\usepackage{float}

\newtheorem{theorem}{Theorem}[section]
\newtheorem*{theorem*}{Theorem}
\newtheorem{lemma}[theorem]{Lemma}
\newtheorem{corollary}[theorem]{Corollary}
\newtheorem{proposition}[theorem]{Proposition}

\theoremstyle{definition}
\newtheorem{definition}[theorem]{Definition}
\newtheorem{conjecture}[theorem]{Conjecture}
\newtheorem{example}[theorem]{Example}

\theoremstyle{remark}
\newtheorem{remark}[theorem]{Remark}
\usepackage{authblk}

\usepackage{mathtools}
\usepackage{amssymb}
\usepackage{appendix}

\numberwithin{equation}{section}

\usepackage[numbers,sort&compress]{natbib}


\DeclareMathOperator{\supp}{supp}

\def \del {\partial}

\allowdisplaybreaks









\setcounter{tocdepth}{2}

\usepackage{scalerel}
\usepackage{stackengine,wasysym}

\newcommand\reallywidetilde[1]{\ThisStyle{%
  \setbox0=\hbox{$\SavedStyle#1$}%
  \stackengine{-.1\LMpt}{$\SavedStyle#1$}{%
    \stretchto{\scaleto{\SavedStyle\mkern.2mu\AC}{.5150\wd0}}{.6\ht0}%
  }{O}{c}{F}{T}{S}%
}}

\begin{document}
	
\title{H\"older regularity of the pressure for weak solutions of the 3D Euler equations in bounded domains}
	
\author[]{Claude Bardos\footnote{Laboratoire J.-L. Lions, BP187, 75252 Paris Cedex 05, France; Wolfgang Pauli Institute - Inst. CNRS Pauli IRL 2842 c/o Faculty of Mathematics, University of Vienna, Oskar-Morgenstern-Platz 1, A-1090 Wien, Austria. Email: \textsf{claude.bardos@gmail.com}} \,, Daniel W. Boutros\footnote{Department of Applied Mathematics and Theoretical Physics, University of Cambridge, Cambridge CB3 0WA UK. Email: \textsf{dwb42@cam.ac.uk}} \space and Edriss S. Titi\footnote{Department of Mathematics, Texas A\&M University, College Station, TX 77843-3368, USA; Department of Applied Mathematics and Theoretical Physics, University of Cambridge, Cambridge CB3 0WA UK; also Department of Computer Science and Applied Mathematics, Weizmann Institute of Science, Rehovot 76100, Israel. Emails: \textsf{titi@math.tamu.edu} \; \textsf{Edriss.Titi@maths.cam.ac.uk}}}

\date{October 16, 2024} 
	
\maketitle

\begin{abstract}
We consider the three-dimensional incompressible Euler equations on a bounded domain $\Omega$ with $C^4$ boundary. We prove that if the velocity field $u \in C^{0,\alpha} (\Omega)$ with $\alpha > 0$ (where we are omitting the time dependence), it follows that the corresponding pressure $p$ of a weak solution to the Euler equations belongs to the Hölder space $C^{0, \alpha} (\Omega)$. We also prove that away from the boundary $p$ has $C^{0,2\alpha}$ regularity. \\
In order to prove these results we use a local parametrisation of the boundary and a very weak formulation of the boundary condition for the pressure of the weak solution, as was introduced in [C. Bardos and E.S. Titi, Philos. Trans. Royal Soc. A, 380 (2022), 20210073], which is different than the commonly used boundary condition for classical solutions of the Euler equations. Moreover, we provide an explicit example illustrating the necessity of this new very weak formulation of the boundary condition for the pressure. Furthermore, we also provide a rigorous derivation of this new formulation of the boundary condition for weak solutions of the Euler equations. \\
This result is of importance for the proof of the first half of the Onsager Conjecture, the sufficient conditions for energy conservation of weak solutions to the three-dimensional incompressible Euler equations in bounded domains. In particular, the results in this paper remove the need for separate regularity assumptions on the pressure in the proof of the Onsager conjecture. 
\end{abstract}

\noindent \textbf{Keywords:} Incompressible Euler equations, Onsager's conjecture, pressure regularity, boundary regularity, weak formulation

\vspace{0.1cm} \noindent \textbf{Mathematics Subject Classification:} 35Q31 (primary), 35Q35, 76B03, 35J05, 35J08, 35J25, 35D30 (secondary)
	
\bigskip

\tableofcontents

\section{Introduction}
\subsection{Background and introduction of the problem}

In this work, we study the existence and regularity of the pressure for weak solutions of the three-dimensional Euler equations of an ideal incompressible fluid in a $C^4$ bounded domain $\Omega \subset \mathbb{R}^3$. These equations are given by
\begin{equation} \label{eulerequations}
\partial_t u + \nabla \cdot (u \otimes u) + \nabla p = 0, \quad \nabla \cdot u = 0,
\end{equation}
where $u : \Omega \times (0,T) \rightarrow \mathbb{R}^3$ is the velocity field and $p : \Omega \times (0,T) \rightarrow \mathbb{R}$ is the pressure, which are the unknowns. In addition, we assume the following impermeability boundary condition
\begin{equation} \label{velocitybc}
(u \cdot n) \lvert_{\partial \Omega} = 0,
\end{equation}
where $n$ is the outward normal vector to the boundary.

Since the work of Lars Onsager in \cite{onsager}, which emphasised the role of weak solutions of the Euler equations in the theory of turbulence, such solutions have received a lot of attention in the mathematical literature. To be precise, Onsager conjectured that there is a relation between the H\"older (or Besov) regularity of the velocity vector field of a weak solution of the Euler equations on the one hand and the (potential) loss of energy via anomalous dissipation on the other hand. 

This relation is referred to as Onsager's conjecture, which can be stated as follows (a survey of the literature will be given later in the introduction).
\begin{conjecture}[Onsager's conjecture]
Let $u \in L^\infty ((0,T); L^2 (\Omega))$ be a weak solution of the incompressible Euler equations. Then:
\begin{enumerate}
    \item If $u \in L^3 ((0,T); C^{0,\alpha} (\Omega))$ with $\alpha > \frac{1}{3}$, then the kinetic energy of the solution $u$ is conserved, i.e. the spatial $L^2 (\Omega)$ norm is constant in time.
    \item There exist solutions $u \in L^3 ((0,T); C^{0,\alpha} (\Omega))$ with $\alpha < \frac{1}{3}$, such that the kinetic energy is decreasing.
\end{enumerate}
\end{conjecture}

Until relatively recently, most of the work on this conjecture has focused on the problem in the absence of physical boundaries. However, it has long been known that boundary effects play an important role in the understanding of hydrodynamic turbulence (see for example \cite{bardos2021,frisch} and references therein). 

One result that remarkably illustrates the role of boundary effects is the Kato criterion, which was established in \cite{kato} (see also \cite{bardoseuler,bardosturbulence}). The criterion states that for solutions of the Navier-Stokes equations with the no-slip boundary condition, the vanishing of the total energy dissipation integrated over a thin layer (the so-called Prandtl-Von Karman boundary layer) as the viscosity $\nu$ goes to zero, is equivalent to the validity of the inviscid limit of the Navier-Stokes solutions to the corresponding solution of the Euler equations (under the assumption that a strong Euler solution exists). Note that the thickness of the Prandtl-Von Karman boundary layer is of order $\nu$. Additional Kato-type criteria were proved in \cite{wang,constantinremarks,chenkato}. These results suggest that in the case of bounded domains there is an equivalence between the presence of anomalous dissipation and the loss of regularity, as previously noted in \cite{titi2019}. 

Because of the aforementioned crucial role of physical boundaries in the study of turbulent flows, it is therefore worthwhile to try to understand the interaction of weak solutions with the boundary, and in particular the pressure. The goal of this paper is to prove that in a $C^4$ bounded domain $\Omega \subset \mathbb{R}^3$ for a velocity field $u$ of a weak solution of the Euler equations that belongs to $C^{0,\alpha} (\Omega)$ for $\alpha \in (0,1)$, the corresponding pressure of the weak solution satisfies $p \in C^{0,\alpha} (\Omega)$ with the same exponent $\alpha$. Such a regularity and existence result is needed for the completion of the proof of the first part of the Onsager conjecture in the case of bounded domains \cite{titi2018} (this will be explained in more detail later).

In standard treatments of mathematical fluid mechanics, such as \cite{robinsonbook,constantinbook,temambook}, the pressure is removed from the equations by using the Leray-Helmholtz decomposition. The pressure can then be recovered from the velocity by solving the elliptic equation
\begin{equation} \label{pressureeq}
-\Delta p = (\nabla \otimes \nabla) : (u \otimes u), \quad \text{in } \Omega.
\end{equation}
In our setting however, one also needs to derive an associated boundary condition for the pressure for the elliptic equation \eqref{pressureeq} in order to have a complete boundary-value problem. A major part of this paper is the derivation and justification of a new boundary condition for the pressure, which is different than the commonly used boundary condition in the setting of classical solutions, and which is crucial to have a well-defined elliptic problem in the low-regularity setting (as was explained previously, the study of weak solutions is fundamentally related to the study of turbulence). 

In the remainder of this introduction, we will provide a formal derivation of this new boundary condition and an overview of the literature. After that we will state our main results and several corollaries. 
\subsection{Introduction of the new boundary condition for the pressure} \label{introdboundcondition}
We will start by providing a formal derivation of the new boundary condition that was already mentioned. If equation \eqref{eulerequations} is considered in the presence of boundaries, formally one can take the normal component of the Euler equations in order to find
\begin{equation*}
\partial_t (u \cdot n) + \big[ \nabla \cdot (u \otimes u) \big] \cdot n + \partial_n p = 0.
\end{equation*}
Note that in this equation and in what follows we will use the extension of the outward normal vector to the interior of the domain $\Omega$. For such an extension to be performed in a unique fashion, one has to perform the computations sufficiently close to the boundary, which will always be the case in this paper. Following \cite{bardos2021}, for a $C^2$ bounded domain the advective term can be written formally as follows (where we are using the Einstein summation convention)
\begin{align*}
\big[ \nabla \cdot (u \otimes u) \big] \cdot n &= (u_i \partial_i u_j) n_j = - \big( u \otimes u \big) : \nabla n + u \cdot \nabla (u \cdot n). 
\end{align*}
We now calculate the second term in the above expression. To fix ideas, we do the calculation and highlight the issue in three dimensions (but it works the same way for any dimension larger than or equal to 2), see also \cite{bardos2021}. It should be stressed that these computations are done in the context of smooth, say classical, solutions. We will formally derive the boundary condition that will be rigorously justified in this paper. In what follows $\tau_1$ and $\tau_2$ will denote the orthogonal tangent vectors to the boundary at a given point $x \in \partial \Omega$, we compute that
\begin{align*}
&u \cdot \nabla (u \cdot n) = (u \cdot \tau_1) \partial_{\tau_1} (u \cdot n) + (u \cdot \tau_2) \partial_{\tau_2} (u \cdot n) + (u \cdot n) \partial_n (u \cdot n) \\
&= \frac{1}{2} \partial_n (u \cdot n)^2 + \partial_{\tau_1} \big( (u \cdot \tau_1) (u \cdot n) \big) + \partial_{\tau_2} \big( (u \cdot \tau_2) (u \cdot n) \big) - (u \cdot n) \partial_{\tau_1} (u \cdot \tau_1) - (u \cdot n) \partial_{\tau_2} (u \cdot \tau_2)  \\
&= \partial_n (u \cdot n)^2 + \partial_{\tau_1} \big( (u \cdot \tau_1) (u \cdot n) \big) + \partial_{\tau_2} \big( (u \cdot \tau_2) (u \cdot n) \big),
\end{align*}
where for the last equality we have used the incompressibility of the velocity field, i.e. that $\partial_{\tau_1} (u \cdot \tau_1) + \partial_{\tau_2} (u \cdot \tau_2) = -  \partial_n (u\cdot n)$. Therefore we can write the normal component of the Euler equations at the boundary as follows
\begin{equation*}
\partial_n p = \big( u \otimes u \big) : \nabla n - \partial_n (u \cdot n)^2 - \partial_{\tau_1} \big( (u \cdot \tau_1) (u \cdot n) \big) - \partial_{\tau_2} \big( (u \cdot \tau_2) (u \cdot n) \big) - \partial_t (u \cdot n).
\end{equation*}
Since $\partial_t (u \cdot n) \lvert_{\partial \Omega} = 0$ and $\partial_{\tau_1} \big( (u \cdot \tau_1) (u \cdot n) \big) \lvert_{\partial \Omega} = \partial_{\tau_2} \big( (u \cdot \tau_2) (u \cdot n) \big) \lvert_{\partial \Omega} = 0$, we conclude that the boundary condition associated with equation \eqref{pressureeq} is
\begin{equation} \label{boundarycondition}
\partial_n \big( p + (u \cdot n)^2 \big) = \big( u \otimes u \big) : \nabla n \quad \text{on } \partial \Omega.
\end{equation}

We will study the elliptic boundary-value problem given by equation \eqref{pressureeq} and Neumann boundary condition \eqref{boundarycondition}. Let us now introduce some terminology.
\begin{definition} \label{veryweakboundconddef}
Consider a velocity field $u \in C^{0,\alpha} (\Omega)$ and a pressure $p \in C^{0,\alpha} (\Omega)$ such that for the region within distance $\delta$ of the boundary $\partial \Omega$ (for some fixed $\delta > 0$), $p + (u \cdot n)^2$ has $C^{1,\alpha}$ regularity in the normal coordinate variable and $H^{-2}$ regularity in the tangential coordinate variables. We say that $(u,p)$ satisfies the \emph{very weak boundary condition} if 
\begin{equation} \label{veryweakboundcondeq}
\partial_n \big( p + (u \cdot n)^2 \big) = \big( u \otimes u \big) : \nabla n, \quad \text{on } \partial \Omega,
\end{equation}
holds in $H^{-2} (\partial \Omega)$. A pair $(u,p) \in C^{0,\alpha} (\Omega)$ (which is a weak solution of the Euler equations) is said to satisfy the usual (common) weak boundary condition if
\begin{equation} \label{usualboundarycond}
\partial_n p = \big( u \otimes u \big) : \nabla n, \quad \text{on } \partial \Omega.
\end{equation}
\end{definition}
\begin{remark}
We remark that in the case $(u,p) \in C^{0,\alpha} (\Omega)$, the regularity does not suffice to apply a trace theorem in order to give meaning to either $\partial_n \big( p + (u \cdot n)^2 \big) \lvert_{\partial \Omega}$ or $\partial_n p \, \lvert_{\partial \Omega}$, i.e. either the very weak or usual boundary conditions. Therefore we require additional Hölder regularity in the normal coordinate variable near the boundary (at the expense of negative Sobolev regularity in the tangential coordinate variables), so that $\partial_n \big( p + (u \cdot n)^2 \big)$ is well-defined as a function near the boundary. In this work, the required regularity will be obtained directly from the equation.
\end{remark}
The first thing to observe is that the two notions of boundary condition are equivalent if $\alpha > \frac{1}{2}$, as we will show in the next lemma.
\begin{lemma} \label{equivalencelemma}
Let $u \in C^{0,\alpha} (\Omega)$ with $\alpha > \frac{1}{2}$ be a velocity field such that $(u \cdot n) \lvert_{\partial \Omega} = 0$, then
\begin{equation}
\partial_n (u \cdot n)^2 = 0,
\end{equation}
and consequently boundary conditions \eqref{veryweakboundcondeq} and \eqref{usualboundarycond} are equivalent.
\end{lemma}
\begin{proof}
One can compute for $x \in \partial \Omega$ and $0 < s \ll 1$ (recall that in this paper we will use the convention that the normal vector points outward from the domain)
\begin{align*}
\lvert \partial_n (u \cdot n)^2 (x) \rvert &= \bigg\lvert \lim_{s \rightarrow 0} \frac{(u \cdot n)^2 (x - s n) - (u \cdot n)^2 (x)}{-s} \bigg\rvert = \bigg\lvert \lim_{s \rightarrow 0} \frac{(u \cdot n)^2 (x - s n) }{-s} \bigg\rvert \\
&= \bigg\lvert \lim_{s \rightarrow 0} \frac{\big( (u \cdot n) (x - s n) - (u \cdot n) (x) \big)^2}{-s} \bigg\rvert \leq \lVert u \rVert_{C^{0,\alpha} (\Omega)}^2 \lim_{s \rightarrow 0} \frac{s^{2 \alpha}}{s} = 0,
\end{align*}
where we have used the boundary condition \eqref{velocitybc} for $u$.
\end{proof}

In this paper, we are predominantly interested in the low regularity setting, i.e., for $\alpha \in (0, \frac{1}{2}]$ (although the regularity result we will prove holds for all $\alpha \in (0,1)$). For $\alpha \in (0, \frac{1}{2}]$ the property $\partial_n (u \cdot n)^2 = 0$ does not generally hold, as we will show later in section \ref{examplesection}. In fact, in section \ref{examplesection} we will construct an explicit example of a Hölder continuous incompressible velocity field $u \in C^{0,\alpha} (\Omega)$ satisfying the boundary condition $(u \cdot n) \lvert_{\partial \Omega} = 0$ for which $\partial_n (u \cdot n)^2 \lvert_{\partial \Omega} \notin \mathcal{D}' (\partial \Omega)$. 

The regularity result for the pressure that we will prove is that if $u \in C^{0,\alpha} (\Omega)$, then $p \in C^{0,\alpha} (\Omega)$ for any $\alpha \in (0,1)$ such that the pressure obeys the very weak boundary condition \eqref{veryweakboundcondeq}. Note that we will omit the time dependence of the pressure and the velocity field throughout this paper, as it does not play a role of significance here.

The reason we prove this regularity result in the setting of H\"older spaces is because these spaces play an essential role in the theory of turbulence, as they can be related to the scaling properties of the structure functions. 
\subsection{Overview of the literature}
Before we state the main results of this paper, we provide a brief overview of the literature. As Onsager's conjecture has been the focus of many works recently, by necessity this review will be an incomplete overview of all the relevant results. In particular, we will mostly focus on results which pertain to the case of bounded domains.

Onsager's conjecture was first proved in the torus $\mathbb{T}^3$ or on the full space $\mathbb{R}^3$ in a series of works \cite{eyink,constantin,duchon,cheskidovconstantin,isettproof,buckmasteradmissible}. As noted in \cite{drivas2018}, it is natural to consider Onsager's conjecture in the presence of physical boundaries, as all experimental evidence for anomalous dissipation from laboratory experiments is for flows with physical boundaries. The conservation of energy part of Onsager's conjecture was proven on bounded domains with $C^2$ boundary in \cite{titi2018}, after results for the half plane in \cite{robinson}. Then in \cite{titi2019} the first half of the conjecture was proven under only an interior H\"older regularity assumption above the Onsager exponent on the velocity field, but with a continuity assumption on the normal component of the energy flux near the boundary, as well as a regularity assumption on the pressure.

It was shown in \cite{drivas2018} using a different proof that in the interior of the domain a Besov regularity rather than Hölder regularity assumption suffices. A technical improvement of this result was given in \cite{nguyen}. Moreover, in \cite{drivas2019,constantinremarks,bardosturbulence,bardoseuler} sufficient conditions were provided for convergence in bounded domains of Leray-Hopf weak solutions of the Navier-Stokes equations to weak (potentially dissipative) solutions of the Euler equations under structure function scaling assumptions. Further work on the mathematical analysis of wall-bounded turbulence can be found in \cite{quan,eyinkonsager} (and see references therein). 

In order to construct dissipative solutions of the Euler equations (with periodic boundary conditions), the method of convex integration was introduced and employed to incompressible fluid mechanics in \cite{lellisadmissibility,lellisinclusion}. Since then, the techniques and implementation of convex integration have been steadily improved to construct H\"older continuous solutions of the Euler equations in a sequence of papers \cite{lelliscontinuous,buckmaster,buckmasteradmissible,isettproof,daneri} (and see references therein). Reviews of these techniques can be found in \cite{buckmasterreview,lellis}.

Although the majority of results established using convex integration are for the case of periodic boundary conditions, there are also several results in the presence of physical boundaries. In \cite{bardosnonuniqueness} the nonuniqueness of admissible weak solutions on an annulus with rotational data was established. In \cite{vasseurboundary} the nonuniqueness of weak solutions for plug flow initial data in the channel was proved, and it was demonstrated that the deviation from the steady solution coincides with upper bounds on the layer separation. Finally, the surveys \cite{bardosturbulence,bardoseuler,eyinkreview} provide an overview of results on the role of boundaries in incompressible turbulence. 

For the proof in \cite{titi2018} of the conservation part of the Onsager conjecture with boundaries, the propagation of $C^{0, \alpha}$ regularity from the velocity to the pressure was a necessary additional assumption, see Proposition 1.2 in \cite{titi2018}. The purpose of this paper is to give a full proof of this statement in the three-dimensional case.

This type of pressure regularity problem for the Euler equations, to the knowledge of the authors, was first considered in \cite{silvestre,constantinlocal}, in the case without physical boundaries. It was proven in these papers that if $u \in C^{0, \alpha} (\mathbb{R}^3)$, then $p \in C^{0, 2\alpha} (\mathbb{R}^3)$ for $\alpha \in (0,1/2) \cup (1/2,1)$. In particular, if $\alpha > \frac{1}{2}$, this means that $p \in C^{1,2\alpha-1} (\mathbb{R}^3)$. These results were then extended to Besov spaces in \cite{colombo} (see also \cite{colomboregularity}). The (usual/common) weak boundary condition \eqref{usualboundarycond} for the pressure was first derived in \cite{temameuler}, for strong solutions of the Euler equations. 

In the two-dimensional setting with boundaries, the pressure regularity problem was addressed in \cite{bardos2021}. In particular, it was shown that the velocity field and the pressure have the same H\"older regularity for a bounded domain $\Omega$ in two dimensions. Moreover, the very weak formulation of the Neumann boundary condition for the pressure given in equation \eqref{boundarycondition} was introduced in \cite{bardos2021}. This was an essential part of the proof, as it allows to construct a trace formula which establishes that the normal derivative of $p + \phi (u\cdot n)^2$ is continuous in the $H^{-2} (\partial \Omega)$ norm near the boundary, where we have introduced a smooth cutoff function $\phi$ which is $1$ near the boundary and vanishes in the interior of the domain. This trace formula is then applied in the elliptic estimates for equation \eqref{pressureeq} in order to establish the $C^{0,\alpha} (\Omega)$ regularity of the pressure.

One goal of this paper is to extend the approach in \cite{bardos2021} to three dimensions. In order to go from two to three dimensions, a change in approach of the proof was necessary. In \cite{bardos2021} the proof relies on a global parametrisation of the boundary in two dimensions, which facilitates the definition of a global localisation in a small vicinity of the boundary. This global localisation in turn allows for the decomposition of the velocity field near the boundary and away from the boundary. In this contribution we do not rely on the global parametrisation of the boundary and instead we modify this localisation, namely we introduce a partition of unity of the region near the boundary itself. The reason for doing so is that in two dimensions, the boundary can be parametrised globally, but in three dimensions this is not possible. 

That means that the near-boundary analysis has to be done in a local coordinate system and then extended globally. This in turn required the use of new elliptic Schauder-type estimates, which we establish in Appendices \ref{schauderappendix1} and \ref{schauderappendix2}. We expect that our proof is quite robust, i.e. it can be extended to higher dimensions without much effort and for other hydrodynamical systems for which the pressure satisfies a similar elliptic boundary-value problem. 

The other aim of this paper is to demonstrate that \eqref{veryweakboundcondeq} is the only viable boundary condition for the pressure for weak solutions of the Euler equations at this low level of regularity of the velocity, namely $u \in C^{0,\alpha} (\Omega)$ for $\alpha \in (0,1)$, in particular when $\alpha \in (0, \frac{1}{2}]$. Moreover, we also obtain the interior double Hölder exponent regularity of the pressure, i.e. $p \in C^{0,2 \alpha}_{\mathrm{loc}} (\Omega)$ for $\alpha \in (0, \frac{1}{2})$ (and also $\alpha \in (\frac{1}{2},1)$).

While an earlier draft of this work was completed, the paper \cite{derosa} came to our attention. In that paper the authors also prove a regularity result for the pressure, but with a different boundary condition. Subsequently, the double Hölder exponent regularity of the pressure (satisfying that boundary condition) was obtained in \cite{derosa2024}. In particular, in \cite{derosa,derosa2024} the authors use the usual weak boundary condition $\partial_n p = (u \otimes u) : \nabla n$ (cf. \eqref{usualboundarycond}) as opposed to the very weak boundary condition \eqref{boundarycondition}, as in the abstract of \cite{derosa} it is noted that: ``Differently from Bardos and Titi (2022), we do not introduce a new boundary condition for the pressure, but instead work with the natural one [i.e. $\partial_n p = (u \otimes u) : \nabla n$].''. These two formulations of the boundary condition are equivalent say if $u \cdot \nabla (u \cdot n) \lvert_{\partial \Omega} = 0$. 

As we stressed above, this is true for classical solutions as well as for the case when $u\in C^{0,\alpha} (\Omega)$ with $\alpha > 1/2$ (as we proved in Lemma \ref{equivalencelemma}), but in the case when $u \in C^{0, \alpha} (\Omega)$ for $0 < \alpha \leq \frac{1}{2}$ we will show in section \ref{examplesection} that $u \cdot \nabla (u \cdot n) \lvert_{\partial \Omega}$ in some cases does not make sense, not even as an element of $\mathcal{D}' (\partial \Omega)$. In particular, one cannot take $u \cdot \nabla (u \cdot n) \lvert_{\partial \Omega}$ to be zero. This point has been overlooked previously in the literature, and it is a highlight of this paper. For this reason, we prove the pressure regularity result with the weaker formulation of the boundary condition for the pressure \eqref{boundarycondition}, which holds for all H\"older continuous velocity fields (and hence all Hölder continuous weak solutions of the Euler equations). In particular, we rigorously derive the very weak boundary condition \eqref{veryweakboundcondeq} directly from the weak formulation of the Euler equations in Theorem \ref{veryweakboundcondthm}. Moreover, our regularity proof is more explicit because it relies on localisation arguments. 

\subsection{Main results of the paper and overview}
We will first give a rough descriptive version of the result that we will prove (the precise version is stated in section \ref{parametrisationsection} as Theorem \ref{pressureregularity}). Let us introduce $\widetilde{\phi} : [0, \infty) \rightarrow [0,1]$ to be a nonincreasing smooth $C^\infty$ function defined as follows (for some fixed $\delta > 0$)
\begin{equation} \label{phifunction}
\widetilde{\phi} (s) \coloneqq \begin{cases}
1, \quad \text{if } s \in [0,\delta], \\
0, \quad \text{if } s \geq 2 \delta.
\end{cases}
\end{equation}
Then we define
\begin{equation}
\phi (x) \coloneqq \widetilde{\phi} (d(x, \partial \Omega)),
\end{equation}
where $d(x, \partial \Omega)$ is the distance of the point $x$ to the boundary.
Throughout this paper, we will assume that $\delta > 0$ is chosen suitably small such that the outward normal vector to $\partial \Omega$ can be smoothly extended adequately (in a sense to be described later) in the set $\{ x \in \Omega \lvert \; d (x, \partial \Omega) \leq 2 \delta \}$. Now we will state the informal version of the main result of this paper.
\begin{theorem} \label{regularitytheorem}
Let $u \in C^{0, \alpha} (\Omega)$, for $\alpha \in (0,1)$, be a divergence-free velocity field in an open set $\Omega \subset \mathbb{R}^3$ with a $C^4$ boundary. Moreover we assume that $u$ satisfies the boundary condition $(u \cdot n ) \lvert_{\partial \Omega} = 0$. By introducing a new weak formulation of the boundary condition \eqref{boundarycondition}, we therefore consider the following elliptic boundary-value problem for the pressure (subject to a Neumann boundary condition)
\begin{align}
\begin{cases}
-\Delta \big( p + \phi (x) (u \cdot n)^2 \big) = (\nabla \otimes \nabla) : (u \otimes u) - \Delta ( \phi (x) (u \cdot n)^2) \quad \text{in } \Omega, \\
\partial_n \big( p + (u \cdot n)^2 \big) = \big( u \otimes u \big) : \nabla n \quad \text{on } \partial \Omega.
\end{cases} \label{neumannpressureproblem}
\end{align}
Then there exists a unique solution $p \in C^{0,\alpha} (\Omega)$ to this boundary-value problem, and it satisfies the following estimate
\begin{equation} \label{regularityestimate}
\lVert p \rVert_{C^{0, \alpha} (\Omega)} \leq C \lVert u \otimes u \rVert_{C^{0, \alpha} (\Omega)}.
\end{equation}
Moreover, in particular it holds that $\partial_n \big(p + (u \cdot n)^2 \big) \lvert_{\partial \Omega} \in H^{-2} (\partial \Omega)$.
\end{theorem}
It should be emphasised that the vector field $u$ in Theorem \ref{regularitytheorem} is not required to be a weak solution of the Euler equations. First we make several remarks on the result. 
\begin{remark}
In fact, we not only prove that $\partial_n \big(p + (u \cdot n)^2 \big) \lvert_{\partial \Omega}$ lies in $H^{-2} (\partial \Omega)$. Let $\delta > 0$ be suitably small and $V_\delta$ be the region of $\Omega$ within distance $\delta$ of the boundary $\partial \Omega$ (see equation \eqref{Vdeltadef} for a definition). We will establish in sections \ref{tracesection} and \ref{limitsection} that $\partial_n \big(p + \phi (x) (u \cdot n)^2 \big)$ is Hölder continuous with exponent $\alpha$ in the normal coordinate variable, and has $H^{-2}$ regularity in the tangential coordinate variables in the region $V_\delta$ (see Lemma \ref{tracelemma} and the proof of Theorem \ref{pressureregularity} in section \ref{limitsection}). To the best of our knowledge, such a result and interpretation of the boundary condition is new in the context of solutions of elliptic equations such as \eqref{neumannpressureproblem}.
\end{remark}
\begin{remark}
As was mentioned before, the result of Theorem \ref{regularitytheorem} was needed for the completion of the proof of the first part of the Onsager conjecture with physical boundaries in \cite[~Proposition 1.2]{titi2018}. In particular, the result removes the necessity for separate existence and regularity assumptions on the pressure when formulating sufficient criteria for energy conservation in the presence of physical boundaries (we will state a precise result below in Theorem \ref{onsagerconjecture}). Separate assumptions on the existence and regularity of the pressure in the context of Onsager-type results were also made in \cite[~Theorem 4.1]{titi2019} and \cite[~Theorem 1 and Remark 3]{drivas2018}. Moreover, the results of this paper clarify the precise boundary-value problem satisfied by the pressure of weak solutions of the 3D Euler equations.
\end{remark}
\begin{remark}
We emphasise that the result of Theorem \ref{regularitytheorem} holds for any Hölder exponent $0 < \alpha < 1$, and does not require an assumption of the type $\alpha > \frac{1}{3}$, as in the case of the Onsager conjecture. In particular, the pressure regularity result also holds for dissipative weak solutions of the Euler equations in bounded domains with Hölder regularity below the Onsager exponent (see \cite{bardoseuler,bardosturbulence} for results on this notion of solution). 
\end{remark}
\begin{remark}
To the best of our knowledge, an estimate of the type \eqref{regularityestimate} is not covered by the existing Schauder theory for elliptic equations. Therefore, in the proof of Theorem \ref{regularitytheorem} we need to extend the Hölder regularity theory for elliptic equations and use an overlapping compact localisation near and away from the boundary (together with a finite covering of the boundary). In particular, we prove new Schauder-type estimates in Appendices \ref{schauderappendix1} and \ref{schauderappendix2}. The idea of using the overlapping localisation has been inspired by \cite{kukavicainviscid}.
\end{remark}
Once we have proved the existence and $C^{0,\alpha} (\Omega)$ Hölder regularity of the pressure in $\Omega$, we then show that away from the boundary in fact it belongs to the space $C^{0,2 \alpha}_{\mathrm{loc}} (\Omega)$ (analogously to the results in \cite{silvestre}). We define the set (cf. equation \eqref{Vdeltadef})
\begin{equation*}
V_{\eta} \coloneqq \{ x \in \Omega \lvert \; d(x, \partial \Omega) < \eta \}.
\end{equation*}
\begin{theorem} \label{doubleregularitythm}
Let $0 < \alpha < \frac{1}{2}$, let $\partial \Omega \in C^4$, and let $u \in C^{0,\alpha} (\Omega)$ be a divergence-free velocity field which is tangential to the boundary. Also, let $p \in C^{0,\alpha} (\Omega)$ be the pressure from Theorem \ref{regularitytheorem}. Then $p \in C^{0,2 \alpha} (\Omega \backslash V_{\eta} )$ for any $\eta > 0$ and it satisfies the estimate
\begin{equation}
\lVert p \rVert_{C^{0,2 \alpha} (\Omega \backslash V_{\eta} )} \leq C \lVert u \rVert_{C^{0,\alpha} (\Omega)}^2,
\end{equation}
where the constant $C$ depends on $\eta$.
\end{theorem}
As has been stressed repeatedly up to this point, if the velocity field $u$ has low Hölder regularity (i.e. $\alpha < \frac{1}{2}$), it is essential to use the very weak boundary condition \eqref{veryweakboundcondeq}. In particular, we establish this by means of an explicit example. We have the following result (which should be contrasted with Lemma \ref{equivalencelemma}).
\begin{theorem} \label{exampletheorem}
Let $\alpha \in \left(0,\frac{1}{2}\right)$. There exists a divergence-free velocity field $u \in C^{0,\alpha} (\Omega)$ such that $(u \cdot n) \lvert_{\partial \Omega} = 0$ and
\begin{equation}
\partial_n (u \cdot n)^2 \big\lvert_{\partial \Omega} \notin \mathcal{D}' (\partial \Omega).
\end{equation}
In particular, this implies that the associated pressure $p$ solving problem \eqref{neumannpressureproblem} (of which the existence and Hölder regularity was established in Theorem \ref{regularitytheorem}) has the property
\begin{equation}
\partial_n p \space \big\lvert_{\partial \Omega} \notin \mathcal{D}' (\partial \Omega).
\end{equation}
In the endpoint case $\alpha = \frac{1}{2}$, there exists a divergence-free velocity field $u$ obeying the same boundary condition $(u \cdot n ) \lvert_{\partial \Omega} = 0$ such that
\begin{equation}
\partial_n (u \cdot n)^2 \big\lvert_{\partial \Omega} \neq 0.
\end{equation}
\end{theorem}
In addition, we also derive the very weak boundary condition directly from the weak formulation of the Euler equations. We obtain the following result.
\begin{proposition} \label{weaksolutionproposition}
Let $(u,p)$ be any weak solution of the Euler equations, such that $u \in L^\infty ((0,T); C^{0,\alpha} (\Omega))$. Then the pressure $p \in L^\infty ((0,T); C^{0,\alpha} (\Omega))$ satisfies the very weak boundary condition \eqref{veryweakboundcondeq}.
\end{proposition}
Finally, by using the results in this paper, we can prove the following sharper result regarding the Onsager conjecture with physical boundaries.
\begin{theorem} \label{onsagerconjecture}
Let $u \in L^\infty ((0,T); L^2 (\Omega))$ be a weak solution of the Euler equations with the following properties:
\begin{itemize}
    \item For every open subset $\widetilde{\Omega} \subset \subset \Omega$, there exists an exponent $\alpha (\widetilde{\Omega}) > \frac{1}{3}$ such that $u \in L^3 ((0,T); C^{0,\alpha (\widetilde{\Omega})} (\widetilde{\Omega}))$.
    \item There exists a $\widetilde{\lambda} > 0$ such that $u \in L^3 ((0,T); C^{0,\widetilde{\lambda}} (\Omega))$.
\end{itemize}
Then the solution $u$ conserves energy, i.e. for almost every $t_1, t_2 \in (0,T)$
\begin{equation}
\lVert u(\cdot, t_1) \rVert_{L^2} = \lVert u (\cdot, t_2) \rVert_{L^2}.
\end{equation}
\end{theorem}
\begin{proof}
For some suitably small $\gamma_0 > 0$, we define $\widetilde{\sigma} : V_{\gamma_0} \rightarrow \partial \Omega$ to be the projection onto the boundary, i.e. the map $x \in V_{\gamma_0} \mapsto \widetilde{\sigma} (x)$ such that $d(x, \partial \Omega) = \lvert x - \widetilde{\sigma} (x) \rvert$. By Theorem 4.1 in \cite{titi2019}, we know that the weak solution $u$ conserves energy under the first assumption of the theorem together with the following two additional assumptions:
\begin{enumerate}
    \item There exist $M, \gamma_0 > 0$ such that
    \begin{equation} \label{conservationassumption1}
    p \in L^{3/2} ((0,T); H^{-M} (V_{\gamma_0})).
    \end{equation}
    \item The following limit has to hold (for $\gamma_1 \in (0, \gamma_0)$)
    \begin{equation} \label{conservationassumption2}
    \lim_{\gamma_1 \rightarrow 0} \int_0^T \frac{1}{\gamma_1} \int_{\big\{ x \in \Omega \big\lvert \frac{\gamma_1}{4} < d(x, \partial \Omega) < \frac{\gamma_1}{2} \big\}} \bigg\lvert \bigg( \frac{\lvert u \rvert^2}{2} + p \bigg) u(t,x) \cdot n (\widetilde{\sigma}(x)) \bigg\rvert dx dt = 0.
    \end{equation}
\end{enumerate}
By applying Theorem \ref{regularitytheorem} and using the second assumption of the theorem we immediately find that $p \in L^{3/2} ((0,T); C^{0,\widetilde{\lambda}} (\Omega))$. This means that assumption \eqref{conservationassumption1} is satisfied. This also implies that $\bigg( \frac{\lvert u \rvert^2}{2} + p \bigg) u(t,x) \in L^1 ((0,T); C^{0,\widetilde{\lambda}} (\Omega))$ . We can then obtain the following estimate (using the boundary condition $(u \cdot n) \lvert_{\partial \Omega} = 0$ and the $C^{0,\widetilde{\lambda}} (\Omega)$ Hölder continuity of $u$ and $p$)
\begin{align*}
&\int_0^T \frac{1}{\gamma_1} \int_{\big\{ x \in \Omega \big\lvert \frac{\gamma_1}{4} < d(x, \partial \Omega) < \frac{\gamma_1}{2} \big\}} \bigg\lvert \bigg( \frac{\lvert u \rvert^2}{2} + p \bigg) u(x,t) \cdot n (\widetilde{\sigma}(x)) \bigg\rvert dx dt \\
&\leq \int_0^T \frac{1}{\gamma_1} \int_{\big\{ x \in \Omega \big\lvert d(x, \partial \Omega) < \frac{\gamma_1}{2} \big\}} \bigg\lvert \bigg[ \bigg( \frac{\lvert u \rvert^2}{2} + p \bigg) u(x,t) - \bigg( \frac{\lvert u \rvert^2}{2} + p \bigg) u(\widetilde{\sigma} (x),t) \bigg] \cdot n (\widetilde{\sigma}(x)) \bigg\rvert dx dt \\
&\lesssim \int_0^T \frac{1}{\gamma_1} \int_{\big\{ x \in \Omega \big\lvert d(x, \partial \Omega) < \frac{\gamma_1}{2} \big\}} \lvert d(x, \partial \Omega) \rvert^{\widetilde{\lambda}} dx dt \lesssim \int_0^T \frac{1}{\gamma_1} \int_{\big\{ x \in \Omega \big\lvert d(x, \partial \Omega) < \frac{\gamma_1}{2} \big\}} \gamma_1^{\widetilde{\lambda}} dx dt \xrightarrow[]{\gamma_1 \rightarrow 0} 0,
\end{align*}
where in the last step we have used $\lvert V_{\gamma_1} \rvert \lesssim \gamma_1$ (see \cite[~p. 199]{titi2018}). As condition \eqref{conservationassumption2} has now also been verified, we conclude that the solution $u$ conserves energy.
\end{proof}
\begin{remark}
We observe that in order to ensure energy conservation one only needs interior Hölder regularity with exponent above $\frac{1}{3}$, but not uniformly with the same exponent up to the boundary, together with Hölder regularity with any positive exponent uniformly up to the boundary. As was said before, part of the purpose of this work has been to remove the need for separate hypotheses on the pressure in results for energy conservation. Indeed, Theorem \ref{onsagerconjecture} does not require separate regularity assumptions on the pressure, unlike the results in the previous works \cite{titi2018,titi2019,drivas2018}. We note that in the 2D case the regularity hypotheses on the pressure have already been removed in \cite{bardos2021}.
\end{remark}

Now we will proceed to give an outline for the rest of this paper. In section \ref{parametrisationsection} we introduce a parametrisation of the boundary region. In particular we define the local coordinate system and state the differential operators in these coordinates. In section \ref{weakformulationsection} we rigorously derive the weak formulation of boundary-value problem \eqref{neumannpressureproblem} for weak solutions of the Euler equations. Moreover, we prove Proposition \ref{weaksolutionproposition} and show that the very weak boundary condition \eqref{veryweakboundcondeq} can be rigorously derived for all Hölder continuous weak solutions of the Euler equations.

Theorem \ref{regularitytheorem} will be proved in sections \ref{mollificationsection}-\ref{limitsection}. The proof then proceeds in the following steps:
\begin{itemize}
    \item We first mollify the velocity field, which is done in section \ref{mollificationsection}. This is not as straightforward as in in the case of the torus or the whole space, as the mollified velocity $u^\epsilon$ has to satisfy the boundary condition \eqref{velocitybc}. One needs to split the velocity field into interior and boundary parts. The parametrisation of the boundary region is then used to extend the velocity field over the boundary. 
    \item Then it is possible to use standard Schauder theory, as can be found in \cite{krylov,ladyzhenskaya,gilbarg,hanbook,giaquinta}. This will give us a candidate for a near the boundary truncated and mollified pressure $P^\epsilon$, for which we are going to take the limit $\epsilon \rightarrow 0$ at the end of the proof. 
    \item We then prove that the $C^{0,\alpha} (\Omega)$ norm of $P^\epsilon$ is bounded by the $C^{0,\alpha} (\Omega)$ norm of $u^\epsilon \otimes u^\epsilon$ uniformly in $\epsilon$. In section \ref{interiorsection} we derive the interior estimates, while in section \ref{decompositionsection} we obtain the boundary estimates. In order to deal with the boundary condition we establish a trace lemma in section \ref{tracesection}. In section \ref{limitsection} we then combine the estimates from sections \ref{interiorsection}-\ref{decompositionsection} and take the limit $\epsilon \rightarrow 0$ to establish the regularity estimate from Theorem \ref{regularitytheorem}.
\end{itemize}
Subsequently, in section \ref{doubleregularitysection} we will prove the interior double Hölder exponent regularity of the pressure and prove Theorem \ref{doubleregularitythm}. Finally, in section \ref{examplesection} we provide an explicit example that illustrates why the very weak boundary condition \eqref{boundarycondition} is necessary and prove Theorem \ref{exampletheorem}. To be more precise, we will construct an example of a velocity field in $C^{0,\alpha} (\Omega)$ for $0< \alpha < \frac{1}{2}$ with $(u \cdot n)\lvert_{\partial \Omega} = 0$ for which $\partial_n (u \cdot n)^2 \lvert_{\partial \Omega} \notin \mathcal{D}' (\partial \Omega)$. Therefore one cannot consider the terms $\partial_n p$ or $\partial_n (u \cdot n)^2$ individually at the boundary (as they are ill-defined). 

In particular, $\partial_n (u \cdot n)^2$ is not well-defined at the boundary and it is definitely not equal to zero in the case when the velocity field $u\in C^{0,\alpha} (\Omega)$ for $0 < \alpha \leq \frac{1}{2}$. As a result, it is necessary to consider (as was done in \eqref{boundarycondition}) the sum $\partial_n (p + (u \cdot n)^2)$ together at the boundary to obtain a well-defined boundary condition. The boundary condition \eqref{usualboundarycond} does not hold for weak solutions of the Euler equations, since one is implicitly assuming that $\partial_n (u \cdot n)^2 \big\lvert_{\partial \Omega} = 0$, while $\partial_n (u \cdot n)^2 \big\lvert_{\partial \Omega}$ may not even make sense as a distribution on $\partial \Omega$ when $\alpha \in (0, \frac{1}{2})$.

In Appendices \ref{schauderappendix1} and \ref{schauderappendix2} we establish some Schauder-type estimates that will be used throughout the paper. To conclude, in Appendix \ref{normalderivativeappendix} we will continue with the example given in section \ref{examplesection} and show that $\partial_n (u \cdot n)^2$ is not defined as a distribution for a dense set of points away from the boundary. 

\section{Local parametrisation of the boundary} \label{parametrisationsection}
We introduce a coordinate system for the region near the boundary $\partial \Omega$. We will assume throughout that the domain $\Omega$ is simply connected with $C^4$ boundary. 

Now we introduce the sets (for a given open set $U \subset \partial \Omega$)
\begingroup
\allowdisplaybreaks
\begin{align}
V_\delta &\coloneqq \{ x \in \Omega \lvert \; d(x, \partial \Omega) < \delta \}, \label{Vdeltadef} \\
V_{\delta , U} &\coloneqq  \{ x \in \Omega \lvert \; d(x, U) < \delta \}.
\end{align}
\endgroup
The fact that $\partial \Omega$ is $C^4$ means that around any point $x_0 \in \partial \Omega$ there exist a three-dimensional Cartesian coordinate system $(x_1, x_2, x_3)$ and a $C^4$ function $a : \mathbb{R}^2 \rightarrow \mathbb{R}$ such that the surface $\partial \Omega$ is locally parametrised as $(x_1, x_2, a (x_1, x_2) )$ on an open subset $U_{x_0} \subset \partial \Omega$. Then by the compactness of the boundary $\partial \Omega$, we know that there exist finitely many such subsets $U_1, \ldots, U_m$ which cover the boundary (about the corresponding points $x_1, \ldots, x_m$). 

Locally on $U_{x_0}$, the outward normal vector to $\partial \Omega$ is given by
\begin{equation*}
n = (n_1, n_2, n_3) = \frac{1}{\sqrt{1 + \big\lvert \frac{\partial a}{\partial x_1} \big\rvert^2 + \big\lvert \frac{\partial a}{\partial x_2} \big\rvert^2 }} \bigg( \frac{\partial a}{\partial x_1} ,  \frac{\partial a}{\partial x_2}, -1 \bigg).
\end{equation*}
Without loss of generality we can assume that $x_0 = 0$. Then for $\delta > 0$ small enough we then introduce the coordinate system (see \cite{bardos2021} and \cite[~Theorem 2.12]{wloka}, as well as \cite[~Appendix C.5]{hormander3})
\begingroup
\allowdisplaybreaks
\begin{align}
x_1 &= \sigma_1 - s n_1 (\sigma_1, \sigma_2), \label{transform1} \\
x_2 &= \sigma_2 - s n_2 (\sigma_1, \sigma_2), \label{transform2} \\
x_3 &= a (\sigma_1, \sigma_2) - s n_3 (\sigma_1, \sigma_2), \label{transform3}
\end{align}
\endgroup
for $(\sigma_1, \sigma_2, s) \in [0, \delta]^3$ and where $a(0,0) = 0$.
This transformation is $C^3$, as the normal vector $n$ is $C^3$. Alternatively, equations \eqref{transform1}-\eqref{transform3} can be written as follows 
\begin{equation}
x (\sigma_1, \sigma_2, s) = y (\sigma_1, \sigma_2) - s \cdot n (\sigma_1, \sigma_2),
\end{equation}
where $y$ moves on the local patch $U_{x_0}$ of the surface $\partial \Omega$ and is given by $y (\sigma_1, \sigma_2) = (y_1, y_2, y_3) = (\sigma_1, \sigma_2, a (\sigma_1, \sigma_2))$. We introduce the following notation for the coordinate transformation
\begin{equation}
\phi_{x_0} (\sigma_1, \sigma_2 , s) = x (\sigma_1, \sigma_2, s) = (x_1, x_2, x_3).
\end{equation}
Taking the derivative of $x$ in the normal coordinate variable $s$, we find that
\begin{equation*}
\partial_s x = - n (\sigma_1, \sigma_2).
\end{equation*}
Now we calculate the partial derivatives of $x$ with respect to the tangential variables $(\sigma_1, \sigma_2)$. We first observe that
\begin{align*}
\partial_{\sigma_1} y &= \bigg( 1 , 0, \frac{\partial a}{\partial \sigma_1} \bigg), \quad \partial_{\sigma_2} y = \bigg(0, 1, \frac{\partial a}{\partial \sigma_2} \bigg).
\end{align*}
It is easy to see that these vectors are orthogonal to $n (\sigma_1, \sigma_2)$. 
We also note that $\partial_{\sigma_1} n$ and $\partial_{\sigma_2} n$ are orthogonal to $n$ by definition (as $n$ has unit length). The ``tangent vectors'' at any point $x(\sigma_1, \sigma_2, s)$ are then given by
\begin{align*}
\tau_1 (\sigma_1, \sigma_2, s) &= \partial_{\sigma_1} x = \partial_{\sigma_1} y - s \partial_{\sigma_1} n, \\
\tau_2 (\sigma_1, \sigma_2, s) &= \partial_{\sigma_2} x = \partial_{\sigma_2} y - s \partial_{\sigma_2} n,
\end{align*}
which are orthogonal to $n(\sigma_1,\sigma_2)$ (as the component parts of the tangent vectors are). The vectors $\tau_1, \tau_2$ and $n$ form a basis for $\mathbb{R}^3$ for every point in $V_{\delta, U_{x_0}}$. However, we observe that in general this coordinate system is not orthogonal. 

Now, we turn to computing the gradient, divergence and Laplacian in this new coordinate system. The Jacobian matrix of the coordinate transformation is given by
\begin{equation} \label{jacobian}
J \coloneqq \frac{\partial x}{\partial (\sigma_1, \sigma_2, s)} = \begin{pmatrix}
1 - s \partial_{\sigma_1} n_1 & - s \partial_{\sigma_2} n_1 & - n_1 \\
- s \partial_{\sigma_1} n_2 & 1 - s \partial_{\sigma_2} n_2 & - n_2 \\
\partial_{\sigma_1} a - s \partial_{\sigma_1} n_3 & \partial_{\sigma_2} a - s \partial_{\sigma_2} n_3 & - n_3
\end{pmatrix} = \big(
\partial_{\sigma_1} y - s \partial_{\sigma_1} n, \partial_{\sigma_2} y - s \partial_{\sigma_2} n, - n \big).
\end{equation}
As shown in the proof of Theorem 2.12 in \cite{wloka}, as the Jacobian has nonzero determinant at $x_0$, the coordinate transformation is locally invertible and is a $C^3$ diffeomorphism.

We now introduce the following notation (for the sake of brevity)
\begin{align}
a_{ij} &\coloneqq \big(J^{-1} (J^{-1})^T \big)_{ij} \quad \text{for } i,j = 1, 2, 3, \label{aijdefinition} \\
b &\coloneqq \sqrt{\det ( J^T J)}.
\end{align}
It is easy to see that $(a_{ij})_{i,j=1}^3$ is a symmetric matrix, in fact it is the metric tensor that is associated with the coordinate system \eqref{transform1}-\eqref{transform3}. 

The gradient, the divergence and the Laplacian in the given coordinate system are given by (see equations 9.60, 9.69 and 9.70 in \cite{grinfeld})
\begingroup
\allowdisplaybreaks
\begin{align}
&\nabla f = \frac{\partial f}{\partial \sigma_1} \big( a_{11} \tau_1 + a_{21} \tau_2 + a_{31} n \big) + \frac{\partial f}{\partial \sigma_2} \big( a_{12} \tau_1 + a_{22} \tau_2 + a_{32} n \big) + \frac{\partial f}{\partial s} \big( a_{13} \tau_1 + a_{23} \tau_2 + a_{33} n \big) \\
&\nabla \cdot v = \frac{1}{b} \bigg[ \frac{\partial}{\partial \sigma_1} ( b v_1) + \frac{\partial}{\partial \sigma_2} ( b v_2) + \frac{\partial}{\partial s} ( b v_3) \bigg], \label{divnewcoordinates} \\
&\Delta f = \frac{1}{b} \bigg[ \frac{\partial}{\partial \sigma_1} \bigg( b \bigg[  a_{11} \frac{\partial f}{\partial \sigma_1} +  a_{12} \frac{\partial f}{\partial \sigma_2} + a_{13} \frac{\partial f}{\partial s} \bigg] \bigg) + \frac{1}{b} \bigg[ \frac{\partial}{\partial \sigma_2} \bigg( b \bigg[ a_{21} \frac{\partial f}{\partial \sigma_1} + a_{22} \frac{\partial f}{\partial \sigma_2} + a_{23} \frac{\partial f}{\partial s} \bigg] \bigg) \nonumber \\
&+ \frac{1}{b} \bigg[ \frac{\partial}{\partial s} \bigg( b \bigg[ a_{31} \frac{\partial f}{\partial \sigma_1} + a_{32} \frac{\partial f}{\partial \sigma_2} + a_{33} \frac{\partial f}{\partial s} \bigg] \bigg),
\end{align}
\endgroup
where $(v_1, v_2, v_3)$ are the components of the vector $v$ in the coordinate system $(\sigma_1, \sigma_2, s)$ (as described by equations \eqref{transform1}-\eqref{transform3}.  

We recall that we showed earlier that the normal vector $n$ has unit length and is orthogonal to the other tangent vectors at every point in $V_{\delta, U_{x_0}}$. This means in particular that (as $(a_{ij})$ is the metric tensor)
\begin{equation} \label{metricsimplification}
a_{33} = 1, \quad a_{31} = a_{13} = a_{21} = a_{12} = 0.
\end{equation}
We now introduce the operator $\Delta_\tau$ to be 
\begin{align}  \label{tangentiallaplacian}
\Delta_\tau f &\coloneqq \frac{1}{b} \bigg[ \frac{\partial}{\partial \sigma_1} \bigg( b \bigg[  a_{11} \frac{\partial f}{\partial \sigma_1} + a_{12} \frac{\partial f}{\partial \sigma_2} \bigg] \bigg) + \frac{1}{b} \bigg[ \frac{\partial}{\partial \sigma_2} \bigg( b \bigg[  a_{21} \frac{\partial f}{\partial \sigma_1} +  a_{22} \frac{\partial f}{\partial \sigma_2} \bigg] \bigg).
\end{align}
This allows us to rewrite the expression for the Laplacian as follows
\begin{align} \label{laplaciancoord}
\Delta f &= \Delta_\tau f + \frac{1}{b} \frac{\partial b}{\partial s} \frac{\partial f}{\partial s}   + \frac{\partial^2 f}{\partial s^2}.
\end{align}
Similarly, we can rewrite the expression for the gradient as follows
\begin{equation} \label{gradcoord}
\nabla f = \bigg( a_{11} \frac{\partial f}{\partial \sigma_1} + a_{12} \frac{\partial f}{\partial \sigma_2} \bigg) \tau_1 + \bigg( a_{21} \frac{\partial f}{\partial \sigma_1} + a_{22} \frac{\partial f}{\partial \sigma_2} \bigg) \tau_2 + \frac{\partial f}{\partial s} n \eqqcolon \nabla_\tau f + \frac{\partial f}{\partial s} n.
\end{equation}

\begin{remark}
We note that we could have derived the expression for the gradient, the divergence and the Laplacian directly. We observe that there is the following relation between the tangent vectors
\begin{align*}
\frac{\partial}{\partial x_1} &=  (J^{-1})_{11} \frac{\partial}{\partial \sigma_1} + (J^{-1})_{21} \frac{\partial}{\partial \sigma_2} + (J^{-1})_{31} \frac{\partial}{\partial s}, \\
\frac{\partial}{\partial x_2} &=  (J^{-1})_{12} \frac{\partial}{\partial \sigma_1} + (J^{-1})_{22} \frac{\partial}{\partial \sigma_2} + (J^{-1})_{32} \frac{\partial}{\partial s}, \\
\frac{\partial}{\partial x_3} &=  (J^{-1})_{13} \frac{\partial}{\partial \sigma_1} + (J^{-1})_{23} \frac{\partial}{\partial \sigma_2} + (J^{-1})_{33} \frac{\partial}{\partial s},
\end{align*}
and similarly we have (by using equation \eqref{jacobian})
\begingroup
\allowdisplaybreaks
\begin{align*}
\frac{\partial}{\partial \sigma_1} &= \bigg(1 - s \frac{\partial n_1}{\partial \sigma_1} (\sigma_1, \sigma_2) \bigg) \frac{\partial}{\partial x_1} - s \frac{\partial n_2}{\partial \sigma_1} (\sigma_1, \sigma_2) \frac{\partial}{\partial x_2} + \bigg(\frac{\partial a}{\partial \sigma_1} - s \frac{\partial n_3}{\partial \sigma_1} \bigg) \frac{\partial}{\partial x_3} , \\
\frac{\partial}{\partial \sigma_2} &= - s \frac{\partial n_1}{\partial \sigma_2} (\sigma_1, \sigma_2) \frac{\partial}{\partial x_1} + \bigg(1 - s \frac{\partial n_2}{\partial \sigma_2} (\sigma_1, \sigma_2) \bigg) \frac{\partial}{\partial x_2} + \bigg(\frac{\partial a}{\partial \sigma_2} - s \frac{\partial n_3}{\partial \sigma_2} \bigg) \frac{\partial}{\partial x_3}, \\
\frac{\partial}{\partial s} &= - n_1 \frac{\partial}{\partial x_1} - n_2 \frac{\partial}{\partial x_2} - n_3 \frac{\partial}{\partial x_3} = - n \cdot \nabla.
\end{align*}
\endgroup
These relations stipulate how the vector components transform between the different bases, this allows one to rewrite the differential operators in the coordinates from equations \eqref{transform1}-\eqref{transform3} by rewriting the standard expressions for these operators in Cartesian coordinates. 
\end{remark}
\begin{remark} 
We note that it is straightforward to extend coordinate system \eqref{transform1}-\eqref{transform3} to higher dimensions. As in the three-dimensional case, one can rely on the compactness of the boundary to obtain a finite number of surface patches to cover the boundary. The formulae for the gradient, divergence and Laplacian operators have higher-dimensional generalisations of the same form. 
\end{remark}
\begin{remark} \label{boundaryregularityremark}
In order to have a working proof we require that $\partial \Omega \in C^4$. Consequently, this implies that the normal vector $n$ is a $C^3$ function of $(\sigma_1, \sigma_2)$. The Jacobian matrix $J$ involves tangential derivatives of $n$, which makes that $J$ has $C^2$ regularity. \\
There are two reasons we need $J$ to have a high level of regularity. The first reason is that the proof of Theorem \ref{regularitytheorem} relies on taking smooth approximations of the velocity field $u$ (which we will construct in section \ref{mollificationsection}). For these smooth approximations there exists an associated smooth approximate pressure by standard Schauder theory. However, as the normal vector is part of the boundary condition \eqref{usualboundarycond} (which is equivalent to \eqref{veryweakboundcondeq} in this case), for the Schauder theory to apply we need $n \in C^{2,\alpha} (\Omega)$ for $\alpha \in (0,1)$.  \\ 
The second reason is that the proofs of several results in this paper involve (boundary) regularity estimates in the coordinate system \eqref{transform1}-\eqref{transform3}. Since the divergence and Laplacian in this coordinate system involve first-order derivatives of $J$, we need $J$ to be at least $C^1$. We expect that by using the variational formulation of the equation with the same choice of coordinates will allow us to weaken the boundary regularity requirement to $C^2$ instead of $C^4$. Then one has to perform the Schauder-type estimates from section \ref{decompositionsection} in the weak formulation, see for example \cite{giaquinta}.
\end{remark}
Throughout the paper, we will consider a modified pressure defined by (cf. Definition \ref{modifiedpressuredef})
\begin{equation}
P \coloneqq p + \phi (x) (u \cdot n)^2,
\end{equation}
where $\phi$ is a smooth cutoff function that was defined in \eqref{phifunction}. Now we will give the following precise statement of Theorem \ref{regularitytheorem} from the introduction.
\begin{theorem} \label{pressureregularity}
Let $\Omega$ be an open set in $\mathbb{R}^3$ with $C^4$ boundary and assume that $u \in C^{0, \alpha} (\Omega)$ for $\alpha \in (0,1)$ is a velocity field which is divergence-free and satisfies $(u \cdot n) \lvert_{\partial \Omega} = 0$. Then there is a unique function $P \in C^{0, \alpha} (\Omega)$ with the following properties:
\begin{enumerate}
    \item The function $P$ satisfies the following equation in $\Omega$
    \begin{equation}
    - \Delta P = (\nabla \otimes \nabla ) : (u \otimes u) - \Delta (\phi (x) ( u \cdot n )^2),
    \end{equation}
    which is satisfied in the sense of distributions. In particular, it means that for any test function $\psi \in \mathcal{D} (\Omega)$ (a weak formulation for the case $\psi \in \mathcal{D} (\overline{\Omega})$ will be given in Definition \ref{weakformulationdef})
    \begin{equation}
    - \int_{\Omega} P \Delta \psi dx = \int_{\Omega} u_i u_j \partial_i \partial_j \psi dx - \int_{\Omega} \phi (x) (u \cdot n)^2 \Delta \psi dx.
    \end{equation}
    Moreover, $P$ satisfies the boundary condition
    \begin{equation}
    \partial_n P = \big( u \otimes u \big) : \nabla n \quad \text{on } \partial \Omega.
    \end{equation}
    This equation holds in $H^{-2} (\partial \Omega)$.
    Moreover, the average of $P$ satisfies
    \begin{equation}
    \int_\Omega P dx = \int_\Omega \phi (x)  (u \cdot n)^2 dx,
    \end{equation}
    where $(u \cdot n)^2$ is defined locally on each patch $V_{\delta,U_i}$ and extended globally by using the partition of unity. 
    \item It satisfies the following estimate
    \begin{equation}
    \lVert P \rVert_{C^{0, \alpha} (\Omega)} \leq C \lVert u \otimes u \rVert_{C^{0, \alpha} (\Omega)},
    \end{equation}
    the positive constant $C$ depends only on $\Omega$ and $\alpha$.
    \item In any region $V_{\delta,U_i}$ for $i=1, \ldots, m$, the map $s \mapsto \partial_{s} P (\cdot, s)$ lies in the space $C([0, \delta); H^{-2} (U_i )  )$ where $s \in [0, \delta)$ is the normal coordinate and $U_i$ is the local patch of the boundary around some point $x_i \in \partial \Omega$. By using a partition of unity over the patches $U_1, \ldots, U_m$, the function $\partial_{s} P(\cdot, s)$ can be extended to a function in $C([0, \delta), H^{-2} (\partial \Omega))$, i.e. a function defined globally near the boundary.
\end{enumerate}
\end{theorem}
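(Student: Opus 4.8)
The plan is to realise $P$ as the limit of approximate modified pressures $P^\epsilon$ built from a mollified velocity field $u^\epsilon$, and to obtain the estimate in item~1 by a Schauder-type argument run separately in the interior and near the boundary. First, in Section~\ref{mollificationsection} I would mollify $u$. Unlike on the torus or on $\R^3$, the mollified field must remain divergence-free and must keep the tangency condition $(u^\epsilon\cdot n)\lvert_{\partial\Omega}=0$. To arrange this I would split $u$ by the cutoff $\phi$ of \eqref{phifunction} into an interior piece and a boundary piece: the interior piece is mollified by ordinary convolution, while the boundary piece is transported to the coordinates $(\sigma_1,\sigma_2,s)$ of \eqref{transform1}--\eqref{transform3}, extended across $s=0$, mollified there and pulled back, so that tangency and the incompressibility constraint are preserved. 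One then has $u^\epsilon\to u$ with $\lVert u^\epsilon\otimes u^\epsilon\rVert_{C^{0,\alpha}}\le C\lVert u\otimes u\rVert_{C^{0,\alpha}}$ uniformly in $\epsilon$ and $u^\epsilon\otimes u^\epsilon\to u\otimes u$ in $C^{0,\alpha'}$ for every $\alpha'<\alpha$. For smooth $u^\epsilon$ the factor $(u^\epsilon\cdot n)$ in $(u^\epsilon\cdot n)^2$ vanishes on $\partial\Omega$, so $\partial_n(u^\epsilon\cdot n)^2\lvert_{\partial\Omega}=0$ and the very weak boundary condition \eqref{boundarycondition} reduces to the classical one; hence classical Schauder theory (Appendices~\ref{schauderappendix1}--\ref{schauderappendix2} and the references in the excerpt) produces a smooth $P^\epsilon=p^\epsilon+\phi(u^\epsilon\cdot n)^2$, with $p^\epsilon$ solving \eqref{pressureeq}--\eqref{boundarycondition} for $u^\epsilon$, so that $-\Delta P^\epsilon=(\nabla\otimes\nabla):(u^\epsilon\otimes u^\epsilon)-\Delta(\phi(u^\epsilon\cdot n)^2)$ with Neumann datum $(u^\epsilon\otimes u^\epsilon):\nabla n$ on $\partial\Omega$ and mean $\int_\Omega\phi(u^\epsilon\cdot n)^2$.

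The core of the argument is the $\epsilon$-uniform bound $\lVert P^\epsilon\rVert_{C^{0,\alpha}}\le C\lVert u^\epsilon\otimes u^\epsilon\rVert_{C^{0,\alpha}}$. Away from the boundary (Section~\ref{interiorsection}) I would localise with a cutoff supported inside $\Omega$ and estimate the resulting compactly supported Poisson problem, whose right-hand side is of $(\nabla\otimes\nabla):(\cdot)$ form with a $C^{0,\alpha}$ tensor, by the whole-space Schauder estimates of the appendices, absorbing the usual lower-order $L^\infty$ term at the end. Near the boundary (Section~\ref{decompositionsection}) I would work patch by patch in $V_{\delta,U_i}$, use the flattened form \eqref{laplaciancoord} of the Laplacian together with the simplifications \eqref{metricsimplification} coming from $\lvert n\rvert=1$ and $n\perp\tau_i$, and prove Schauder estimates for the corresponding constant-Neumann boundary value problem. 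Since the coordinate system is not orthogonal (the coefficients $a_{12}$, $a_{11}$, $a_{22}$ of $\Delta_\tau$ are genuinely variable), this is an elliptic operator with Hölder coefficients and oblique data, handled by the versions of the Schauder estimates collected in the appendices.

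The step that both constitutes the main obstacle and forces the very weak formulation is the boundary condition. The Neumann datum $(u\otimes u):\nabla n$ is only $C^{0,\alpha}$, and, worse, the individual term $\partial_n(u\cdot n)^2$ is in general not even a distribution on $\partial\Omega$ (precisely the phenomenon exhibited in Section~\ref{examplesection} and Appendix~\ref{normalderivativeappendix}), so $\partial_n p$ and $\partial_n(u\cdot n)^2$ cannot be separated. To circumvent this I would, in the trace lemma of Section~\ref{tracesection}, differentiate the equation in the normal direction and integrate: from \eqref{laplaciancoord},
\[
\partial_s P^\epsilon(\cdot,s)=\partial_s P^\epsilon(\cdot,0)+\int_0^s\Big(-\Delta_\tau P^\epsilon-\tfrac1b\,\partial_s b\,\partial_s P^\epsilon+\text{(right-hand side terms)}\Big)(\cdot,\sigma)\,d\sigma ,
\]
and test against $H^2(U_i)$ functions. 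The uniform $C^{0,\alpha}$ bound makes $s\mapsto P^\epsilon(\cdot,s)$ (uniformly in $\epsilon$) continuous into $L^2(U_i)$, hence $\Delta_\tau P^\epsilon(\cdot,s)$ continuous into $H^{-2}(U_i)$; the terms $(\nabla\otimes\nabla):(u^\epsilon\otimes u^\epsilon)$ and $\Delta(\phi(u^\epsilon\cdot n)^2)$ are second distributional derivatives of $C^{0,\alpha}$ functions and, after two integrations by parts onto the test function, are likewise controlled in $C([0,\delta);H^{-2}(U_i))$ uniformly in $\epsilon$. This yields equicontinuity of $s\mapsto\partial_s P^\epsilon(\cdot,s)$ into $H^{-2}(U_i)$, and in particular a well-defined trace at $s=0$ equal to $(u^\epsilon\otimes u^\epsilon):\nabla n$; a partition of unity subordinate to $U_1,\dots,U_m$ promotes this to $C([0,\delta);H^{-2}(\partial\Omega))$.

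Finally, in Section~\ref{limitsection} I would pass to the limit $\epsilon\to0$: by the uniform $C^{0,\alpha}$ bound there is a subsequence with $P^\epsilon\to P$ in $C^{0,\alpha'}$ for $\alpha'<\alpha$ and weakly-$*$ in $C^{0,\alpha}$, so $P$ inherits the estimate of item~1, the distributional equation and the mean identity of item~3 (all right-hand sides converge in $\mathcal D'(\Omega)$), and, through the trace lemma, the continuity statement of item~2 and the boundary condition $\partial_n P=(u\otimes u):\nabla n$ in $H^{-2}(\partial\Omega)$. Uniqueness follows since the difference $w$ of two solutions is harmonic in $\Omega$, hence smooth there, with vanishing Neumann trace in $H^{-2}(\partial\Omega)$ and vanishing mean, so a standard energy argument for the Neumann problem—using the interior smoothness of $w$ to justify the $H^{-2}$--$H^2$ pairing on $\partial\Omega$—forces $w$ to be constant and then $0$. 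I expect the near-boundary Schauder estimate in the non-orthogonal coordinates and the $H^{-2}$ trace bookkeeping to be the only genuinely delicate points; the interior estimate and the passage to the limit are routine.
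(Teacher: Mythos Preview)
Your outline matches the paper's strategy closely: mollify, produce $P^\epsilon$ by classical Schauder theory, split into interior and boundary pieces, establish the $H^{-2}$ trace formula in the $(\sigma_1,\sigma_2,s)$ coordinates, and pass to the limit. Two points deserve sharpening. First, for the mollification the paper does not extend and mollify $u$ directly (which makes it hard to keep $\nabla\cdot u^\epsilon=0$); instead it writes $u=\nabla\times\psi$ with $\psi\lvert_{\partial\Omega}=0$, takes an odd extension of the localised $\psi$ across $s=0$, mollifies $\psi$, and sets $u^\epsilon=\nabla\times\psi^\epsilon$, which automatically yields incompressibility and $(u^\epsilon\cdot n)\lvert_{\partial\Omega}=0$. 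Second, ``absorbing the lower-order $L^\infty$ term'' is the content of Proposition~\ref{pressurebound}: one argues by contradiction, normalises $\mathcal G^\epsilon=P^\epsilon/\lVert P^\epsilon\rVert_{L^\infty}$, and must show that the limit $\mathcal G$ inherits $\partial_n\mathcal G=0$ in $H^{-2}(\partial\Omega)$; this is exactly where the trace lemma is indispensable, and the paper integrates from $s$ to $\delta$ (using that the cutoff kills everything near $s=\delta$) to obtain $\partial_s P^\epsilon_{b,j}=\Lambda^\epsilon+\int_s^\delta\Theta^\epsilon\,ds'$ with $\Lambda^\epsilon,\Theta^\epsilon$ controlled directly by $P^\epsilon$ and $u^\epsilon\otimes u^\epsilon$ (no $\partial_s P^\epsilon$ on the right), rather than from $0$ to $s$ as you wrote. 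With these adjustments your proposal is the paper's proof.
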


\section{Derivation of the weak formulation for the pressure} \label{weakformulationsection}
\subsection{Introduction}
In this section, we will derive a weak formulation of the boundary-value problem for the associated pressure of a weak solution to the Euler equations given in equation \eqref{neumannpressureproblem}. We will then prove Proposition \ref{weaksolutionproposition}, which states that the very weak boundary condition from Definition \ref{veryweakboundconddef} can be derived from the weak formulation (for the velocity field) for any Hölder continuous weak solution of the Euler equations. Finally, we then show that the weak formulation for the boundary-value problem for the pressure can be obtained directly from the weak formulation of the Euler equations assuming only Hölder regularity of the velocity field of the weak solution.

In particular, we consider the following boundary-value problem of the Euler equations
\begin{equation} \label{eulerboundaryvalueproblem}
\begin{cases}
\partial_t u + u \cdot \nabla u + \nabla p = 0, \quad \nabla \cdot u = 0, \quad \text{in } \Omega, \\
 (u \cdot n) = 0, \quad \text{on } \partial \Omega.
\end{cases}
\end{equation}
Next we need to introduce some new terminology.
\begin{definition} \label{modifiedpressuredef}
Let $\Omega$ be a bounded open set with $C^2$ boundary and let $u, p \in C^{0,\alpha} (\Omega)$ be Hölder continuous for some exponent $\alpha \in (0,1)$. Moreover, let $\phi$ be a given cutoff function (as introduced in \eqref{phifunction}). Then we define the \textit{modified pressure} to be
\begin{equation}
P \coloneqq p + \phi (x) (u \cdot n)^2.
\end{equation}
\end{definition}

Now we turn to the derivation of the weak formulation for the pressure. Consider a smooth solution $(u,p)$ of the Euler equations. We add and subtract the term $\nabla \big[ (u \cdot n)^2 \phi (x) \big]$ from the equations. After integrating against $\nabla \Phi$ for a test function $\Phi \in \mathcal{D} (\overline{\Omega})$ we find
\begin{align} \label{subtractedtermeuler}
\int_\Omega \bigg[ \partial_t u \cdot \nabla \Phi + \big[ \nabla \cdot \big(u \otimes u - (u \cdot n)^2 \phi I \big) \big] \cdot \nabla \Phi + \nabla \big( p + (u \cdot n^2) \phi) \cdot \nabla \Phi \bigg] dx = 0.
\end{align}
Due to the incompressibility condition (in the sense of distributions) one finds
\begin{equation*}
\int_\Omega \partial_t u \cdot \nabla \Phi dx = 0.
\end{equation*}
Using the definition of the modified pressure, we can rewrite equation \eqref{subtractedtermeuler} as
\begin{align} \label{H1formulation}
- \int_\Omega \nabla P \cdot \nabla \Phi dx = \int_\Omega \big[ \nabla \cdot \big(u \otimes u - (u \cdot n)^2 \phi (x) I \big) \big] \cdot \nabla \Phi dx.
\end{align}
Integrating by parts (using the definition of the distributional derivative) then gives
\begin{align*} 
&\int_\Omega P \Delta \Phi dx - \int_{\partial \Omega} P \partial_n \Phi dx = \int_\Omega \bigg[ - u \otimes u : (\nabla \otimes \nabla) \Phi + (u \cdot n)^2 \phi (x) \Delta \Phi \bigg] dx \\
&+ \int_{\partial \Omega} \bigg[ (u \cdot n) u \cdot \nabla \Phi  - (u \cdot n)^2 \partial_n \Phi \bigg] dx = \int_\Omega \bigg[ - u \otimes u : (\nabla \otimes \nabla) \Phi + (u \cdot n)^2 \phi \Delta \Phi \bigg] dx.
\end{align*}
Now we are able to define a weak solution for the pressure that is part of the boundary-value problem \eqref{eulerboundaryvalueproblem}.
\begin{definition} \label{weakformulationdef}
Let $u \in C^{0,\alpha} (\Omega)$ for $\alpha \in (0,1)$. We call a pressure function $P \in C^{0,\alpha} (\Omega)$ a weak solution of the boundary-value problem given by \eqref{pressureeq} and \eqref{veryweakboundcondeq}, if for all $\Phi \in \mathcal{D} (\overline{\Omega})$ we have
\begin{align*} 
\int_\Omega P \Delta \Phi dx - \int_{\partial \Omega} P \partial_n \Phi dx  &= \int_\Omega \bigg[ - u \otimes u : (\nabla \otimes \nabla) \Phi + (u \cdot n)^2 \phi (x) \Delta \Phi \bigg] dx.
\end{align*}
In addition, we require that $\partial_n P \in C^{0,\alpha} ([0,\delta) ; H^{-2} (\partial \Omega) )$ (where the regularity refers to the coordinates introduced in section \ref{parametrisationsection}).
\end{definition}
\begin{remark}
One should observe that the additional requirement $\partial_n P \in C^{0,\alpha} ([0,\delta) ; H^{-2} (\partial \Omega) )$ is not needed to make sense of the weak formulation in Definition \ref{weakformulationdef}, but it is necessary to recover the boundary condition as we will show in Theorem \ref{veryweakboundcondthm} (see also Proposition \ref{weaksolutionproposition}). In section \ref{limitsection} we will show (in the proof of Theorem \ref{pressureregularity}) the uniqueness of all weak solutions $P \in C^{0,\alpha} (\Omega)$ (without the additional regularity requirement for $\partial_n P$). Because in Theorem \ref{regularitytheorem} we demonstrate the existence of a pressure $P$ with this additional regularity for $\partial_n P$, including this additional requirement as part of Definition \ref{weakformulationdef} does therefore not provide a significant restriction.
\end{remark}
It is easy to see that for sufficiently smooth solutions one can recover the original boundary-value problem. If one assumes that $u \in H^{3/2+\lambda} (\Omega)$ for $\lambda > 0$ one can apply the Lax-Milgram theorem to formulation \eqref{H1formulation}, which is possible because in three dimensions the Sobolev embedding theorem implies that  $\nabla \cdot \big(u \otimes u - (u \cdot n)^2 \phi I \big) \in L^2 (\Omega)$. Therefore one obtains a unique pressure $P \in H^1 (\Omega)$, under the restriction that $\int_\Omega P dx = 0$. Then by assuming more regularity, for example $u \in H^2 (\Omega)$, one finds by standard elliptic regularity results that $P \in H^2 (\Omega)$ (see for example \cite[~Theorem 8.28]{salsa}). Subsequently, by undoing the integration by parts, we find in this case that
\begin{align*}
- \int_\Omega \Delta P \Phi dx + \int_{\partial \Omega} \Phi \partial_n P dx &= \int_\Omega \big[ (\nabla \otimes \nabla) : (u \otimes u) - \Delta ( (u \cdot n)^2 \phi (x)) \big] \Phi dx \\
&- \int_{\partial \Omega} \bigg[ \big[ \nabla \cdot (u \otimes u) \big] \cdot n - \partial_n (u \cdot n)^2\bigg] \Phi dx.
\end{align*}
By considering test functions $\Phi$ in $H^1_0 (\Omega)$, one deduces that the following equation is satisfied in $\Omega$
\begin{equation*}
-\Delta P = (\nabla \otimes \nabla) : (u \otimes u) - \Delta ((u \cdot n)^2 \phi),
\end{equation*}
where the equality holds in $L^2 (\Omega)$. We notice by using the trace theorem that $\partial_n P \in H^{1/2} (\partial \Omega)$, and similarly $- \big[ \nabla \cdot (u \otimes u) \big] \cdot n + \partial_n (u \cdot n)^2 \in H^{1/2} (\partial \Omega)$. The boundary condition can then be recovered in a strong form by choosing the test function $\big[ \partial_n P + \nabla \cdot (u \otimes u) \cdot n - \partial_n (u \cdot n)^2 \big] \phi (x)$ (which is an admissible test function as it lies in $H^1 (\Omega)$), which then gives
\begin{equation}
\partial_n P = \big[ \nabla \cdot (u \otimes u) \big] \cdot n - \partial_n (u \cdot n)^2 \quad \text{in } H^{1/2} (\partial \Omega).
\end{equation}
We recall that this holds under the assumption that $u , P \in H^2 (\Omega)$.
\subsection{Recovering the very weak boundary condition}
Next we will show that we can recover the very weak boundary condition (as introduced in Definition \ref{veryweakboundconddef}) directly from the weak formulation as given in Definition \ref{weakformulationdef}.
\begin{theorem} \label{veryweakboundcondthm}
Let $P$ be a weak solution in the sense of Definition \ref{weakformulationdef}, then $P$ satisfies the following boundary condition
\begin{equation}
\partial_n P = \big( u \otimes u \big) : \nabla n,
\end{equation}
where the equality holds in $H^{-2} (\partial \Omega)$.
\end{theorem}
\begin{proof}
To fix ideas and for the sake of simplicity, we first prove the result in the case of the half-space, i.e. $\Omega = \mathbb{R}^2 \times \mathbb{R}_+$ and $\partial \Omega = \mathbb{R}^2$. Then the result will be established for a general bounded domain (first locally and then globally). On the half-space, it holds that $\nabla n = 0$ and hence we must show that $\partial_n P = 0$ (in the sense of distributions). We take an arbitrary tangential test function $\Phi_\tau \in \mathcal{D} (\mathbb{R}^2)$ and choose $\Phi_n \in \mathcal{D} ([0, \infty) )$ with the following property
\begin{equation}
\Phi_n (z) \coloneqq \begin{cases}
1 \quad \text{if } 0 \leq z \leq \frac{1}{2}, \\
0 \quad \text{if } z \geq 1. \\
\end{cases}
\end{equation}
Then we define $\Phi_h$ as follows
\begin{equation} \label{Phihdefinition}
\Phi_h (z) \coloneqq \Phi_n \bigg( \frac{z}{h} \bigg).
\end{equation}
As before, we take $h$ small enough such that $\phi (x) \equiv 1$ on the support of $\Phi_h$. We denote the horizontal (tangential) gradient and Laplacian by $\nabla_\tau$ and $\Delta_\tau$, respectively. Inserting the test function $\Phi_h \cdot \Phi_\tau$ in the weak formulation of Definition \ref{weakformulationdef}, we find that
\begin{align}
&\int_\Omega P \Phi_h \Delta_\tau \Phi_\tau dx + \int_\Omega P \Phi_\tau \partial_{zz} \Phi_h dx - \int_{\partial \Omega} P \Phi_\tau \partial_n \Phi_h dx  = - \int_\Omega \Phi_h u_\tau \otimes u_\tau : (\nabla_\tau \otimes \nabla_\tau) \Phi_\tau  dx \nonumber \\
&- 2 \int_\Omega (u \cdot n) \partial_z \Phi_h u_\tau \cdot \nabla_\tau \Phi_\tau  dx + \int_\Omega (u \cdot n)^2 \Phi_h \Delta_\tau \Phi_\tau dx. \label{halfspacequation}
\end{align}
Then by using the continuity of $P$ and $u$, as well as the compact support of $\Phi_\tau$, we find that by sending $h \rightarrow 0$
\begin{align*}
&\int_\Omega P \Phi_h \Delta_\tau \Phi_\tau dx \xrightarrow[]{h \rightarrow 0} 0, \quad \int_\Omega \Phi_h u_\tau \otimes u_\tau : (\nabla_\tau \otimes \nabla_\tau) \Phi_\tau  dx \xrightarrow[]{h \rightarrow 0} 0, \\
&\int_\Omega (u \cdot n)^2 \Phi_h \Delta_\tau \Phi_\tau dx \xrightarrow[]{h \rightarrow 0} 0.
\end{align*}
Now using the fact that $\partial_z P \in C^{0,\alpha} ( [0, \delta]; H^{-2} (\mathbb{R}^2))$ (as is required as part of Definition \ref{weakformulationdef}), we find
\begin{equation*}
\int_\Omega P \Phi_\tau \partial_{zz} \Phi_h dx - \int_{\partial \Omega} P \Phi_\tau \partial_n \Phi_h dx = - \int_0^h \partial_z \Phi_h \langle \partial_z P (\cdot, z), \Phi_\tau \rangle_{H^{-2} (\mathbb{R}^2) \times H^2 (\mathbb{R}^2)} dz,
\end{equation*}
where we used the fact that $\supp (\Phi_h) \subset [0,h]$. We will show that the right-hand side converges to $\langle \partial_z P \lvert_{\partial \Omega} , \Phi_\tau \rangle_{H^{-2} (\mathbb{R}^2) \times H^2 (\mathbb{R}^2)}$ as $h \rightarrow 0$. It follows that
\begin{align*}
&\bigg\lvert - \int_0^h \partial_z \Phi_h \langle \partial_z P (\cdot, z), \Phi_\tau \rangle_{H^{-2} (\mathbb{R}^2) \times H^2 (\mathbb{R}^2)} dz - \langle \partial_z P \lvert_{\partial \Omega} , \Phi_\tau \rangle_{H^{-2} (\mathbb{R}^2) \times H^2 (\mathbb{R}^2)} \bigg\rvert \\
&\leq \bigg\lvert \frac{1}{h} \int_0^h \Phi_n' \bigg( \frac{z}{h} \bigg) \bigg[ \langle \partial_z P (\cdot, z), \Phi_\tau \rangle_{H^{-2} (\mathbb{R}^2) \times H^2 (\mathbb{R}^2)} - \langle \partial_z P \lvert_{\partial \Omega} , \Phi_\tau \rangle_{H^{-2} (\mathbb{R}^2) \times H^2 (\mathbb{R}^2)} \bigg] dz \bigg\rvert \\
&\leq \frac{C}{h} \int_0^h \big\lvert \langle \partial_z P (\cdot, z), \Phi_\tau \rangle_{H^{-2} (\mathbb{R}^2) \times H^2 (\mathbb{R}^2)} - \langle \partial_z P \lvert_{\partial \Omega} , \Phi_\tau \rangle_{H^{-2} (\mathbb{R}^2) \times H^2 (\mathbb{R}^2)} \big\rvert dz \xrightarrow[]{h \rightarrow 0} 0,
\end{align*}
where we have used that $\langle \partial_z P (\cdot, z) , \Phi_\tau \rangle_{H^{-2} (\mathbb{R}^2) \times H^2 (\mathbb{R}^2)}$ is a (Hölder) continuous function in $z$, see Definition \ref{weakformulationdef}. By a similar argument one can show that
\begin{equation*}
- \int_\Omega (u \cdot n) \partial_z \Phi_h u_\tau \cdot \nabla_\tau \Phi_\tau  dx \xrightarrow[]{h \rightarrow 0} \int_{\partial \Omega} (u \cdot n) u_\tau \cdot \nabla_\tau \Phi_\tau  dx = 0.
\end{equation*}
Therefore by combining all these results, by taking the limit $h \rightarrow 0$ of equation \eqref{halfspacequation} we find that 
\begin{equation}
\langle \partial_z P \lvert_{\partial \Omega} , \Phi_\tau \rangle_{H^{-2} (\mathbb{R}^2) \times H^2 (\mathbb{R}^2)} = 0.
\end{equation}
As this equation holds for all $\Phi_\tau \in \mathcal{D} (\mathbb{R}^2)$, we conclude in this case that $\partial_z P = 0$ in $H^{-2} (\mathbb{R}^2)$, which is what we had to show. 

Now we turn to the case of a general bounded domain $\Omega$. We first localise to a region $V_{\delta,U_j}$ (on which we have $\phi (x) \equiv 1$), by taking test functions $\Phi$ supported in this region. The modified pressure $P$ satisfies
\begin{equation} \label{localisedpressureeq}
\int_{V_{\delta,U_j}} P \Delta \Phi dx - \int_{U_j} P \partial_n \Phi dx  = \int_{V_{\delta,U_j}} \bigg[ - u \otimes u : (\nabla \otimes \nabla) \Phi + (u \cdot n)^2  \Delta \Phi \bigg] dx.
\end{equation}
Next we perform the change of variables from equations \eqref{transform1}-\eqref{transform3} (and we recall that $b = \sqrt{\det (J^T J)} = \lvert \det (J) \rvert$ and the $a_{ij}$ were defined in equation \eqref{aijdefinition}). We compute that (by using equation \eqref{gradcoord})
\begin{align*}
(\nabla \otimes \nabla) \Phi &= \nabla \bigg[ \bigg( a_{11} \frac{\partial \Phi}{\partial \sigma_1} + a_{12} \frac{\partial \Phi}{\partial \sigma_2} \bigg) \tau_1 + \bigg( a_{21} \frac{\partial \Phi}{\partial \sigma_1} + a_{22} \frac{\partial \Phi}{\partial \sigma_2} \bigg) \tau_2 + \frac{\partial \Phi}{\partial s} n \bigg] \\
&= I_{\Phi,1} + I_{\Phi,2} + \frac{\partial}{\partial s} \bigg( a_{11} \frac{\partial \Phi}{\partial \sigma_1} + a_{12} \frac{\partial \Phi}{\partial \sigma_2} \bigg) n \otimes \tau_1 + \frac{\partial}{\partial s} \bigg( a_{21} \frac{\partial \Phi}{\partial \sigma_1} + a_{22} \frac{\partial \Phi}{\partial \sigma_2} \bigg) n \otimes \tau_2 \\
&+ \bigg( a_{11} \frac{\partial^2 \Phi}{\partial \sigma_1 \partial s} + a_{12} \frac{\partial^2 \Phi}{\partial \sigma_2 \partial s} \bigg) \tau_1 \otimes n + \bigg( a_{21} \frac{\partial^2 \Phi}{\partial \sigma_1 \partial s} + a_{22} \frac{\partial^2 \Phi}{\partial \sigma_2 \partial s} \bigg) \tau_2 \otimes n + \frac{\partial^2 \Phi}{\partial s^2} n \otimes n \\
&+ \frac{\partial \Phi}{\partial s} \nabla n,
\end{align*}
where we have defined
\begin{align*}
I_{\Phi,1} &\coloneqq \bigg[ a_{11} \frac{\partial}{\partial \sigma_1} \bigg( a_{11} \frac{\partial \Phi}{\partial \sigma_1} + a_{12} \frac{\partial \Phi}{\partial \sigma_2} \bigg) + a_{12} \frac{\partial}{\partial \sigma_2} \bigg( a_{11} \frac{\partial \Phi}{\partial \sigma_1} + a_{12} \frac{\partial \Phi}{\partial \sigma_2} \bigg) \bigg] \tau_1 \otimes \tau_1 \\
&+ \bigg[ a_{21} \frac{\partial}{\partial \sigma_1} \bigg( a_{11} \frac{\partial \Phi}{\partial \sigma_1} + a_{12} \frac{\partial \Phi}{\partial \sigma_2} \bigg) + a_{22} \frac{\partial}{\partial \sigma_2} \bigg( a_{11} \frac{\partial \Phi}{\partial \sigma_1} + a_{12} \frac{\partial \Phi}{\partial \sigma_2} \bigg) \bigg] \tau_2 \otimes \tau_1 \\
&+ \bigg[ a_{11} \frac{\partial}{\partial \sigma_1} \bigg( a_{21} \frac{\partial \Phi}{\partial \sigma_1} + a_{22} \frac{\partial \Phi}{\partial \sigma_2} \bigg) + a_{12} \frac{\partial}{\partial \sigma_2} \bigg( a_{21} \frac{\partial \Phi}{\partial \sigma_1} + a_{22} \frac{\partial \Phi}{\partial \sigma_2} \bigg) \bigg] \tau_1 \otimes \tau_2 \\
&+ \bigg[ a_{21} \frac{\partial}{\partial \sigma_1} \bigg( a_{21} \frac{\partial \Phi}{\partial \sigma_1} + a_{22} \frac{\partial \Phi}{\partial \sigma_2} \bigg) + a_{22} \frac{\partial}{\partial \sigma_2} \bigg( a_{21} \frac{\partial \Phi}{\partial \sigma_1} + a_{22} \frac{\partial \Phi}{\partial \sigma_2} \bigg) \bigg] \tau_2 \otimes \tau_2, \\
I_{\Phi,2} &\coloneqq \bigg( a_{11} \frac{\partial \Phi}{\partial \sigma_1} + a_{12} \frac{\partial \Phi}{\partial \sigma_2} \bigg) \nabla \tau_1 + \bigg( a_{21} \frac{\partial \Phi}{\partial \sigma_1} + a_{22} \frac{\partial \Phi}{\partial \sigma_2} \bigg) \nabla \tau_2.
\end{align*}
Then by using the definitions from equations \eqref{divnewcoordinates} and \eqref{tangentiallaplacian}-\eqref{gradcoord}, we can rewrite equation \eqref{localisedpressureeq} as follows
\begin{align*}
&\int_{[0,\delta]^3} P \bigg[ \Delta_\tau \Phi + \frac{1}{b} \partial_s b \partial_s \Phi + \partial_s^2 \Phi \bigg] b d \sigma_1 d\sigma_2 ds - \int_{[0,\delta]^2} P \partial_s \Phi b d \sigma_1 d \sigma_2  \\
&= - \int_{[0,\delta]^3} u \otimes u : \bigg[  I_{\Phi,1} + I_{\Phi,2} + \frac{\partial \Phi}{\partial s} \nabla n + \frac{\partial^2 \Phi}{\partial s^2} n \otimes n \bigg] b d \sigma_1 d\sigma_2 ds \\
&- 2 \int_{[0,\delta]^3} u \otimes u : \bigg[  \bigg( a_{11} \frac{\partial^2 \Phi}{\partial \sigma_1 \partial s} + a_{12} \frac{\partial^2 \Phi}{\partial \sigma_2 \partial s} \bigg) \tau_1 \otimes n + \bigg( a_{21} \frac{\partial^2 \Phi}{\partial \sigma_1 \partial s} + a_{22} \frac{\partial^2 \Phi}{\partial \sigma_2 \partial s} \bigg) \tau_2 \otimes n \bigg] b d \sigma_1 d\sigma_2 ds \\
&- \int_{[0,\delta]^3} u \otimes u : \bigg[  \bigg( \frac{\partial a_{11}}{\partial s} \frac{\partial \Phi}{\partial \sigma_1 } + \frac{\partial a_{12}}{\partial s} \frac{\partial \Phi}{\partial \sigma_2} \bigg) \tau_1 \otimes n + \bigg( \frac{\partial a_{21}}{\partial s} \frac{\partial \Phi}{\partial \sigma_1 } + \frac{\partial a_{22}}{\partial s} \frac{\partial \Phi}{\partial \sigma_2 } \bigg) \tau_2 \otimes n \bigg] b d \sigma_1 d\sigma_2 ds \\
&+ \int_{[0,\delta]^3} (u \cdot n)^2 \bigg[ \Delta_\tau \Phi + \frac{1}{b} \partial_s b \partial_s \Phi + \partial_s^2 \Phi \bigg] b d \sigma_1 d\sigma_2 ds.
\end{align*}
As before, we choose a test function of the form $\Phi = \Phi_\tau \Phi_h$ where $\Phi_\tau \in \mathcal{D} ([0,\delta]^2)$ is a tangential test function and where $\Phi_h$ is the test function defined in equation \eqref{Phihdefinition} (but now with the $z$-coordinate replaced with the normal coordinate $s$). Inserting this choice of test function in the previous equation then yields
\begin{align*}
&\int_{[0,\delta]^3} P \bigg[ \Phi_h \Delta_\tau \Phi_\tau + \frac{1}{b} \Phi_\tau \partial_s b \partial_s \Phi_h + \Phi_\tau \partial_s^2 \Phi_h \bigg] b d \sigma_1 d\sigma_2 ds - \int_{[0,\delta]^2} P \Phi_\tau \partial_s \Phi_h b d \sigma_1 d \sigma_2  \\
&= - \int_{[0,\delta]^3} u \otimes u : \bigg[ \Phi_h I_{\Phi_\tau,1} + \Phi_h I_{\Phi_\tau,2} + \Phi_\tau \frac{\partial \Phi_h}{\partial s} \nabla n \bigg] b d \sigma_1 d\sigma_2 ds - 2 \int_{[0,\delta]^3} u \otimes u : \bigg[  \bigg( a_{11} \frac{\partial \Phi_\tau}{\partial \sigma_1} \frac{\partial \Phi_h}{\partial s} \\
&+ a_{12} \frac{\partial \Phi_\tau}{\partial \sigma_2} \frac{\partial \Phi_h}{\partial s} \bigg) \tau_1 \otimes n + \bigg( a_{21} \frac{\partial \Phi_\tau}{\partial \sigma_1} \frac{\partial \Phi_h}{\partial s} + a_{22} \frac{\partial \Phi_\tau}{\partial \sigma_2} \frac{\partial \Phi_h}{\partial s} \bigg) \tau_2 \otimes n \bigg] b d \sigma_1 d\sigma_2 ds \\
&- \int_{[0,\delta]^3} u \otimes u : \bigg[ \Phi_h \bigg( \frac{\partial a_{11}}{\partial s} \frac{\partial \Phi_\tau}{\partial \sigma_1 } + \frac{\partial a_{12}}{\partial s} \frac{\partial \Phi_\tau}{\partial \sigma_2} \bigg) \tau_1 \otimes n + \Phi_h \bigg( \frac{\partial a_{21}}{\partial s} \frac{\partial \Phi_\tau}{\partial \sigma_1 } + \frac{\partial a_{22}}{\partial s} \frac{\partial \Phi_\tau}{\partial \sigma_2 } \bigg) \tau_2 \otimes n \bigg] b d \sigma_1 d\sigma_2 ds \\
&+ \int_{[0,\delta]^3} (u \cdot n)^2 \bigg[ \Phi_h \Delta_\tau \Phi_\tau + \frac{\Phi_\tau}{b} \partial_s b \partial_s \Phi_h \bigg] b d \sigma_1 d\sigma_2 ds.
\end{align*}
Then, by proceeding as in the case of the half-space, we find (by using the continuity of $P$ and $u$)
\begin{align*}
&\int_{[0,\delta]^3} P \Phi_h \Delta_\tau \Phi_\tau b d \sigma_1 d \sigma_2 ds \xrightarrow[]{h \rightarrow 0} 0, \quad \int_{[0,\delta]^3} u \otimes u : \bigg[ \Phi_h I_{\Phi_\tau,1} + \Phi_h I_{\Phi_\tau,2}  \bigg] b d \sigma_1 d\sigma_2 ds \xrightarrow[]{h \rightarrow 0} 0, \\
&\int_{[0,\delta]^3} u \otimes u : \bigg[ \Phi_h \bigg( \frac{\partial a_{11}}{\partial s} \frac{\partial \Phi_\tau}{\partial \sigma_1 } + \frac{\partial a_{12}}{\partial s} \frac{\partial \Phi_\tau}{\partial \sigma_2} \bigg) \tau_1 \otimes n \\
&+ \Phi_h \bigg( \frac{\partial a_{21}}{\partial s} \frac{\partial \Phi_\tau}{\partial \sigma_1 } + \frac{\partial a_{22}}{\partial s} \frac{\partial \Phi_\tau}{\partial \sigma_2 } \bigg) \tau_2 \otimes n \bigg] b d \sigma_1 d\sigma_2 ds \xrightarrow[]{h \rightarrow 0} 0, \quad \int_{[0,\delta]^3} (u \cdot n)^2 \Phi_h \Delta_\tau \Phi_\tau b d \sigma_1 d\sigma_2 ds \xrightarrow[]{h \rightarrow 0} 0.
\end{align*}
Then by integrating by parts, and using the regularity of $\partial_s P$, we obtain
\begin{align*}
&\int_{[0,\delta]^3} \Phi_\tau P \bigg[\frac{1}{b} \partial_s b \partial_s \Phi_h + \partial_s^2 \Phi_h \bigg] b d \sigma_1 d\sigma_2 ds - \int_{[0,\delta]^2} \Phi_\tau P \partial_s \Phi_h b d \sigma_1 d \sigma_2 \\
&= - \int_0^h \partial_s \Phi_h \langle \partial_s P (\cdot, s), \Phi_\tau b \rangle_{H^{-2} ([0,\delta]^2) \times H^2 ([0,\delta]^2)} ds.
\end{align*}
Now we recall that $\langle \partial_s P (\cdot, s), \Phi_\tau b \rangle_{H^{-2} ([0,\delta]^2) \times H^2 ([0,\delta]^2)}$ is Hölder continuous in the variable $s$, and hence as before we get
\begin{align*}
- \int_0^\delta \partial_s \Phi_h \langle \partial_s P (\cdot, s), \Phi_\tau b \rangle_{H^{-2} ([0,\delta]^2) \times H^2 ([0,\delta]^2)} ds &\xrightarrow[]{h \rightarrow 0} \langle \partial_s P \lvert_{\partial \Omega} , \Phi_\tau b \rangle_{H^{-2} ([0,\delta]^2) \times H^2 ([0,\delta]^2)} \\
&= \langle \partial_n P \lvert_{\partial \Omega} , \Phi_\tau \rangle_{H^{-2} (U_j) \times H^2 (U_j)}.
\end{align*}
Similarly, we find
\begin{align*}
- \int_{[0,\delta]^3} \Phi_\tau \frac{\partial \Phi_h}{\partial s} u \otimes u : \nabla n \; b d \sigma_1 d\sigma_2 ds \xrightarrow[]{h \rightarrow 0} \int_{[0,\delta]^2} \Phi_\tau (u \otimes u)\lvert_{\partial \Omega} : \nabla n \; b d \sigma_1 d\sigma_2,
\end{align*}
because we can obtain the following estimate
\begin{align*}
&\bigg\lvert - \int_{[0,\delta]^3} \Phi_\tau \frac{\partial \Phi_h}{\partial s} u \otimes u : \nabla n \; b d \sigma_1 d\sigma_2 ds - \int_{[0,\delta]^2} \Phi_\tau (u \otimes u)\lvert_{\partial \Omega} : \nabla n \; b d \sigma_1 d\sigma_2 \bigg\rvert \\
&= \bigg\lvert - \int_0^\delta \frac{\partial \Phi_h}{\partial s} \bigg[ \int_{[0,\delta]^2} \Phi_\tau  \big( b u \otimes u - b  u \otimes u \lvert_{\partial \Omega} \big) : \nabla n \; d \sigma_1 d\sigma_2 \bigg]  ds \bigg\rvert \xrightarrow[]{h \rightarrow 0} 0,
\end{align*}
by using the Hölder regularity of $u$. Proceeding in the same way, we get (using the no-normal flow boundary condition $(u \cdot n) \lvert_{\partial \Omega} = 0$)
\begin{align*}
&\int_{[0,\delta]^3} u \otimes u : \bigg[  \bigg( a_{11} \frac{\partial \Phi_\tau}{\partial \sigma_1} \frac{\partial \Phi_h}{\partial s} + a_{12} \frac{\partial \Phi_\tau}{\partial \sigma_2} \frac{\partial \Phi_h}{\partial s} \bigg) \tau_1 \otimes n \\
&+ \bigg( a_{21} \frac{\partial \Phi_\tau}{\partial \sigma_1} \frac{\partial \Phi_h}{\partial s} + a_{22} \frac{\partial \Phi_\tau}{\partial \sigma_2} \frac{\partial \Phi_h}{\partial s} \bigg) \tau_2 \otimes n \bigg] b d \sigma_1 d\sigma_2 ds \xrightarrow[]{h \rightarrow 0} 0, \\
&\int_{[0,\delta]^3} (u \cdot n)^2  \Phi_\tau \partial_s b \partial_s \Phi_h d \sigma_1 d\sigma_2 ds \xrightarrow[]{h \rightarrow 0} 0.
\end{align*}
Combining these results, we find (for all $\Phi_\tau \in H^2 (U_j)$)
\begin{equation}
\langle \partial_n P , \Phi_\tau \rangle_{H^{-2} (U_j) \times H^2 (U_j)} = \int_{U_j} \Phi_\tau (u \otimes u)\lvert_{\partial \Omega} : \nabla n \; d x = \langle (u \otimes u)\lvert_{\partial \Omega} : \nabla n , \Phi_\tau \rangle_{H^{-2} (U_j) \times H^2 (U_j)}.
\end{equation}
Then by using a partition of unity over the sets $\{ U_j \}$, we find for all $\Phi_\tau \in H^2 (\partial \Omega)$
\begin{equation}
\langle \partial_n P , \Phi_\tau \rangle_{H^{-2} (\partial \Omega) \times H^2 (\partial \Omega)} = \langle (u \otimes u) : \nabla n , \Phi_\tau \rangle_{H^{-2} (\partial \Omega) \times H^2 (\partial \Omega)}.
\end{equation}
As this equation holds for all $\Phi_\tau \in H^2 (\partial \Omega)$, we deduce that $\partial_n P = (u \otimes u) : \nabla n$ as elements in $H^{-2} (\partial \Omega)$, which concludes the proof.
\end{proof}
\subsection{Rigorous derivation of the weak formulation}
We now demonstrate that the weak formulation for the pressure from Definition \ref{weakformulationdef} can be derived directly for weak Hölder continuous solutions of the Euler equations. We first recall the definition of a weak solution.
\begin{definition} \label{weaksoleuler}
We say that a velocity field $u \in L^\infty ((0,T); L^2 (\Omega))$ and a pressure $p \in L^\infty ((0,T); L^1 (\Omega))$ are a weak solution of the Euler equations if for all test functions $\chi \in \mathcal{D} (\Omega \times (0,T); \mathbb{R}^3)$
\begin{equation}
\int_0^T \int_\Omega \bigg[ u \cdot \partial_t \chi + u \otimes u : \nabla \chi + p \nabla \cdot \chi \bigg] dx dt = 0.
\end{equation}
Moreover, the velocity field is weakly divergence-free. Therefore for all $\zeta \in \mathcal{D} (\Omega \times (0,T); \mathbb{R})$ it must hold that
\begin{equation}
\int_0^T \int_\Omega u \cdot \nabla \zeta dx dt = 0.
\end{equation}
Finally, the boundary condition $\big( u \cdot n \big) \lvert_{\partial \Omega} = 0$ is satisfied in the sense of trace in $H^{-1/2} (\partial \Omega)$, see the remark below for further details.
\end{definition}
\begin{remark}
In the setting of this paper, namely when $u \in C^{0,\alpha} (\Omega)$, the boundary condition $\big( u \cdot n \big) \lvert_{\partial \Omega} = 0$ is satisfied pointwise on $\partial \Omega$. However, for general weak solutions with $u \in L^\infty ((0,T); L^2 (\Omega))$ one can apply a standard trace theorem (see for example \cite[~Lemma 20.2]{tartar} and \cite[~Proposition 1.4]{constantinbook}) to show that $\big( u \cdot n \big) \lvert_{\partial \Omega} \in H^{-1/2} (\partial \Omega)$, and hence the no-normal flow boundary condition is satisfied in that space.
\end{remark}
We will show the following result.
\begin{theorem} \label{weakformulationthm}
Let $(u,p) \in L^\infty ((0,T); C^{0,\alpha} (\Omega)$ be a weak solution of the Euler equations in the sense of Definition \ref{weaksoleuler}. Then the modified pressure $P$ satisfies the weak formulation from Definition \ref{weakformulationdef}.
\end{theorem}
\begin{proof}
In the weak formulation from Definition \ref{weaksoleuler} we choose the test function $\chi = \varphi (1 - \widetilde{\Phi}_h) \nabla \Psi$, where $\Psi \in \mathcal{D} (\overline{\Omega})$ and $\varphi \in \mathcal{D} (0,T)$ and we have defined 
\begin{equation*}
\widetilde{\Phi}_h (x) \coloneqq \Phi_h (d(x,\partial \Omega)),
\end{equation*}
where we recall that $\Phi_h$ was defined in equation \eqref{Phihdefinition}. We observe that (using that for an outward-pointing normal vector, we have $\nabla d (x,\partial \Omega) = - n$, see for example \cite[~eq. (1.10)]{titi2018} and \cite[~eq. (4.2)]{titi2019})
\begin{equation*}
\nabla \widetilde{\Phi}_h (x) = - \frac{1}{h} \Phi_n' \bigg( \frac{d(x,\partial \Omega)}{h} \bigg) n (\widetilde{\sigma} (x)),
\end{equation*}
where we recall that $\widetilde{\sigma} (x) \in \partial \Omega$ is the unique point on the boundary such that $d (x, \partial \Omega) = \lvert x - \widetilde{\sigma} (x) \rvert$.
Inserting the aforementioned test function $\chi = \varphi (1 - \widetilde{\Phi}_h) \nabla \Psi$ in the weak formulation from Definition \ref{weaksoleuler} gives
\begin{align*}
0 &= \int_0^T \varphi \bigg[ \int_\Omega \big( 1 - \widetilde{\Phi}_h) \bigg[ u \otimes u : (\nabla \otimes \nabla) \Psi + P \Delta \Psi - \phi (u \cdot n)^2 \Delta \Psi \bigg] dx \bigg] dt \\
&+ \int_0^T \int_\Omega u \cdot \nabla \Psi (1 - \widetilde{\Phi}_h) \partial_t \varphi dx dt + \int_0^T \varphi \bigg[ \int_\Omega \frac{1}{h} \bigg[ (u \cdot n)  \Phi_n' \bigg( \frac{d(x,\partial \Omega)}{h} \bigg) u \cdot \nabla \Psi \\
&+ P  \Phi_n' \bigg( \frac{d(x,\partial \Omega)}{h} \bigg) \partial_n \Psi - \phi (u \cdot n)^2 \Phi_n' \bigg( \frac{d(x,\partial \Omega)}{h} \bigg) \partial_n \Psi \bigg] dx \bigg] dt,
\end{align*}
where we have added and subtracted $\phi (u \cdot n)^2$ in order to replace the pressure $p$ by the modified pressure $P$. Then by sending $h \rightarrow 0$ we find
\begin{align*}
&\int_0^T \varphi \bigg[ \int_\Omega \big( 1 - \widetilde{\Phi}_h) \bigg[ u \otimes u : (\nabla \otimes \nabla) \Psi + P \Delta \Psi - \phi (u \cdot n)^2 \Delta \Psi \bigg] dx \bigg] dt \\
&\xrightarrow[]{h \rightarrow 0} \int_0^T \varphi \bigg[ \int_\Omega \bigg[ u \otimes u : (\nabla \otimes \nabla) \Psi + P \Delta \Psi - \phi (u \cdot n)^2 \Delta \Psi \bigg] dx \bigg] dt, \\
&\int_0^T \int_\Omega u \cdot \nabla \Psi (1 - \widetilde{\Phi}_h) \partial_t \varphi dx dt \xrightarrow[]{h \rightarrow 0} \int_0^T \int_\Omega u \cdot \nabla \Psi \partial_t \varphi dx dt = 0.
\end{align*}
By applying the Lebesgue differentiation theorem we then obtain
\begin{align*}
&\int_0^T \varphi \bigg[ \int_\Omega \frac{1}{h} \bigg[ (u \cdot n)  \Phi_n' \bigg( \frac{d(x,\partial \Omega)}{h} \bigg) u \cdot \nabla \Psi + P \Phi_n' \bigg( \frac{d(x,\partial \Omega)}{h} \bigg) \partial_n \Psi \\
&- \phi (u \cdot n)^2 \Phi_n' \bigg( \frac{d(x,\partial \Omega)}{h} \bigg) \partial_n \Psi \bigg] dx \bigg] dt \xrightarrow[]{h \rightarrow 0} - \int_0^T \varphi \bigg[ \int_{\partial \Omega} \bigg[ (u \cdot n)  u \cdot \nabla \Psi + P \partial_n \Psi \\
&- \phi (u \cdot n)^2 \partial_n \Psi \bigg] dx \bigg] dt = -\int_0^T \varphi \bigg[ \int_{\partial \Omega} P \partial_n \Psi dx \bigg] dt,
\end{align*}
where we have used the boundary condition for $u$. Therefore we find the following equation
\begin{equation*}
\int_0^T \varphi \bigg[ \int_\Omega \bigg[ u \otimes u : (\nabla \otimes \nabla) \Psi + P \Delta \Psi - \phi (u \cdot n)^2 \Delta \Psi \bigg] dx - \int_{\partial \Omega} P \partial_n \Psi dx \bigg] dt = 0.
\end{equation*}
By choosing $\varphi (t) \coloneqq \frac{1}{\epsilon} \Phi_\epsilon (t)$ (i.e. setting $h = \epsilon$ in \eqref{Phihdefinition}) and then sending $\epsilon \rightarrow 0$, we find that for almost every $t \in (0,T)$ and all $\Psi \in \mathcal{D} (\overline{\Omega})$
\begin{equation*}
\int_\Omega \bigg[ u \otimes u : (\nabla \otimes \nabla) \Psi + P \Delta \Psi - \phi (u \cdot n)^2 \Delta \Psi \bigg] dx - \int_{\partial \Omega} P \partial_n \Psi dx = 0,
\end{equation*}
which is what we had to show.
\end{proof}
By combining the results of Theorems \ref{regularitytheorem}, \ref{veryweakboundcondthm} and \ref{weakformulationthm} we can deduce Proposition \ref{weaksolutionproposition}.

\section{Mollification of the velocity field} \label{mollificationsection}
\begin{lemma} \label{mollificationlemma}
Let $\Omega$ be a bounded domain with $\partial \Omega \in C^2$, and consider a velocity field $u \in C^{0, \alpha} (\Omega)$ which is divergence-free and tangential to the boundary (so $(u \cdot n) \lvert_{\partial \Omega} = 0$), there exists a family of divergence-free velocity fields $u^\epsilon \in C^\infty (\overline{\Omega})$ (with $(u^\epsilon \cdot n) \lvert_{\partial \Omega} = 0$) which converge to $u $ in $C^{0,\beta} (\overline{\Omega})$ for $\beta \in (0,\alpha)$ as $\epsilon \rightarrow 0$. In addition, we have the estimate
\begin{equation}
\lVert u^\epsilon \rVert_{C^{0, \alpha} (\Omega)} \leq C \lVert u \rVert_{C^{0, \alpha} (\Omega)},
\end{equation}
where $C$ is a positive constant which is independent of $\epsilon$ and $u$.
\end{lemma}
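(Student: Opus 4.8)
The plan is to construct $u^\epsilon$ by splitting $u$ into a part supported away from the boundary, which can be mollified by a standard interior convolution, and a part supported near the boundary, which must be mollified in a way that respects the constraints $\nabla\cdot u^\epsilon=0$ and $(u^\epsilon\cdot n)|_{\partial\Omega}=0$. Concretely, fix the cutoff $\phi$ from \eqref{phifunction} and write $u = (1-\phi)u + \phi u$. The interior piece $(1-\phi)u$ vanishes in a neighbourhood of $\partial\Omega$, so for $\epsilon$ small enough the usual mollification $\rho_\epsilon * \big((1-\phi)u\big)$ is well-defined on $\overline\Omega$, is smooth, converges to $(1-\phi)u$ in $C^{0,\beta}(\overline\Omega)$ for every $\beta<\alpha$, and obeys the Hölder bound; it is not divergence-free on its own, but $\nabla\cdot\big(\rho_\epsilon*((1-\phi)u)\big) = \rho_\epsilon*\nabla\cdot\big((1-\phi)u\big) = \rho_\epsilon*\big(\nabla\phi\cdot u\big)$, a smooth function supported in $\{\delta\le\operatorname{dist}(x,\partial\Omega)\le 2\delta\}$ whose integral over $\Omega$ vanishes. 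The divergence errors from the two pieces cancel, and the residual divergence of the naive sum will be corrected at the end by a Bogovskii-type operator.

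For the boundary piece $\phi u$, I would work patch by patch using the local coordinates $(\sigma_1,\sigma_2,s)$ of section \ref{parametrisationsection}. The key structural fact is that in these coordinates the hypothesis $(u\cdot n)|_{\partial\Omega}=0$ says exactly that the $s$-component $u_3$ of $u$ vanishes at $s=0$, and incompressibility reads $\partial_{\sigma_1}(b u_1)+\partial_{\sigma_2}(b u_2)+\partial_s(b u_3)=0$. One natural device is to introduce, on each patch, a divergence-free extension of $u$ across $\{s=0\}$: extend the tangential components $u_1,u_2$ evenly in $s$ and the normal component $u_3$ oddly in $s$ (relative to the weight $b$), which preserves both the tangency condition and, up to the lower-order terms coming from $\partial_s b$, the divergence-free condition; then mollify this extended field with a kernel in the $(\sigma,s)$ variables supported in $\{s>-c\epsilon\}$ so that the mollified field is still defined on $s\ge 0$. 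Patching these together with a partition of unity subordinate to $U_1,\dots,U_m$ and gluing to the interior mollification gives a smooth field $\tilde u^\epsilon$ on $\overline\Omega$ with $\tilde u^\epsilon\cdot n=0$ on $\partial\Omega$, converging to $u$ in $C^{0,\beta}(\overline\Omega)$, with $\|\tilde u^\epsilon\|_{C^{0,\alpha}(\Omega)}\le C\|u\|_{C^{0,\alpha}(\Omega)}$ (Hölder bounds are stable under the change of variables since the transformation is $C^2$), and with $\|\nabla\cdot\tilde u^\epsilon\|$ small: the divergence is a smooth function that tends to $0$ as $\epsilon\to 0$, is supported in $\overline\Omega$, has vanishing mean (because $\int_\Omega \nabla\cdot\tilde u^\epsilon = \int_{\partial\Omega}\tilde u^\epsilon\cdot n = 0$ by construction), and — crucially — has normal component zero on $\partial\Omega$ so that the correction below does not destroy the boundary condition.

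The final step is to correct the divergence. Let $g_\epsilon \coloneqq \nabla\cdot\tilde u^\epsilon$, a smooth mean-zero function on $\Omega$ with $g_\epsilon\to 0$ in $C^{0,\alpha}$ (indeed in $C^k$ for any $k$) as $\epsilon\to 0$; solve $\nabla\cdot w_\epsilon = g_\epsilon$ in $\Omega$ with $w_\epsilon = 0$ on $\partial\Omega$ using the Bogovskii operator, which is bounded $C^{0,\alpha}(\Omega)\to C^{1,\alpha}(\Omega)$ on the Lipschitz (here $C^3$) domain $\Omega$, so $\|w_\epsilon\|_{C^{0,\alpha}}\le C\|g_\epsilon\|_{C^{0,\alpha}}\to 0$. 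Then $u^\epsilon \coloneqq \tilde u^\epsilon - w_\epsilon$ is smooth on $\overline\Omega$, divergence-free, satisfies $u^\epsilon\cdot n = \tilde u^\epsilon\cdot n - w_\epsilon\cdot n = 0$ on $\partial\Omega$ (both terms vanish), converges to $u$ in $C^{0,\beta}(\overline\Omega)$ for $\beta<\alpha$, and satisfies $\|u^\epsilon\|_{C^{0,\alpha}(\Omega)}\le \|\tilde u^\epsilon\|_{C^{0,\alpha}(\Omega)} + \|w_\epsilon\|_{C^{0,\alpha}(\Omega)} \le C\|u\|_{C^{0,\alpha}(\Omega)}$, which is exactly the claim.

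The main obstacle I expect is the near-boundary mollification: ensuring simultaneously that (i) the mollified field stays defined on $\overline\Omega$ (so one must mollify with a one-sided kernel, or first extend and then restrict), (ii) the exact boundary condition $u^\epsilon\cdot n=0$ holds, not merely approximately, and (iii) the divergence error has vanishing normal trace so the Bogovskii correction is harmless — the even/odd reflection in the $(\sigma,s)$ coordinates is the clean way to get (ii) and to keep (iii) under control, but the curvature terms $\partial_s b$ and the non-orthogonality of the coordinate system mean the reflected field is not exactly divergence-free, so one has to track these lower-order errors carefully and absorb them into $g_\epsilon$. The uniform Hölder bound through the $C^2$ change of variables and the patching partition of unity are routine once this is set up; the convergence $u^\epsilon\to u$ in the slightly weaker space $C^{0,\beta}$ is the standard loss in mollifying Hölder functions.
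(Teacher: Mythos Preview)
Your approach is genuinely different from the paper's. The paper does not mollify $u$ directly; instead it passes to a vector stream function. Using that $\Omega$ is simply connected, it writes $u=\nabla\times\psi$ with $\psi\in C^{1,\alpha}(\Omega)$ and $\psi|_{\partial\Omega}=0$ (obtained from $-\Delta\psi=\nabla\times u$ with Dirichlet data), localises $\psi$ by the same interior/boundary cutoffs and partition of unity, extends each boundary piece oddly in $s$ so that the mollified $\widetilde\psi^\epsilon$ still vanishes on $\partial\Omega$, and sets $u^\epsilon\coloneqq\nabla\times\widetilde\psi^\epsilon$. Both constraints then come for free: $\nabla\cdot u^\epsilon=0$ is automatic from the curl, and $(u^\epsilon\cdot n)|_{\partial\Omega}=0$ follows from $\widetilde\psi^\epsilon|_{\partial\Omega}=0$. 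No divergence bookkeeping and no Bogovskii correction are needed; the only analytic input is the $C^{1,\alpha}$ Schauder estimate for $\psi$.

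Your route is workable in principle and has the advantage of not needing $\Omega$ simply connected, but two claims are overstated. First, the assertion that $g_\epsilon=\nabla\cdot\tilde u^\epsilon\to 0$ ``in $C^{0,\alpha}$ (indeed in $C^k$)'' is not right: the residual divergence comes from mollifying $C^{0,\alpha}$ quantities like $\nabla(\rho_j\phi)\cdot u$ in \emph{different} coordinate systems on overlapping patches, so the cancellations that hold before mollification are only recovered in $C^{0,\beta}$ for $\beta<\alpha$, not in any $C^k$. This is still enough for the lemma (you only need $w_\epsilon\to 0$ in $C^{0,\beta}$ together with a uniform $C^{0,\alpha}$ bound on $w_\epsilon$), but the argument has to be phrased that way. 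Second, the smoothness of the Bogovskii solution $w_\epsilon$ up to $\partial\Omega$ is not automatic when $g_\epsilon$ is not compactly supported in $\Omega$; the standard Bogovskii formula maps $C^\infty_c(\Omega)$ to itself, and for smooth data touching the boundary you would need to invoke a different smooth right inverse of the divergence on $C^3$ domains. Both points are fixable, but the stream-function device in the paper sidesteps them entirely.
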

Before we prove this lemma, we recall several results from the literature on 3D vector potentials (i.e. stream functions).
\begin{proposition} \label{streamfunction}
Let $\Omega$ be a simply connected $C^2$ domain and let $u \in C^{0,\alpha} (\Omega)$ be a divergence-free vector field such that $(u \cdot n) \lvert_{\partial \Omega} = 0$, for a given $\alpha \in (0,1)$. Then there exists a unique vector potential (stream function) $\psi \in C^{1,\alpha} (\Omega)$ with the following properties
\begin{equation} \label{streamfunctionproperties}
u = \nabla \times \psi, \quad \nabla \cdot \psi = 0, \quad (\psi \times n) \lvert_{\partial \Omega} = 0.
\end{equation}
Moreover, the vector potential satisfies the estimate
\begin{equation} \label{streamfunctionestimate}
\lVert \psi \rVert_{C^{1,\alpha} (\Omega)} \leq C \lVert u \rVert_{C^{0,\alpha} (\Omega)},
\end{equation}
where $C$ is a positive constant which is independent of $u$.
Finally, the vector potential $\psi$ can be characterised uniquely as a solution of the following elliptic boundary-value problem
\begin{equation}
\begin{cases}
- \Delta \psi = \nabla \times u, \quad \nabla \cdot \psi = 0, \quad \text{in } \Omega, \\
\psi \times n = 0, \quad \text{on } \partial \Omega.
\end{cases}
\end{equation}
\end{proposition}
\begin{proof}
The existence result for vector potentials in the $L^2$ case can be found in \cite[~Lemma 3.5 and Theorem 3.17]{amrouche} and \cite[~Theorem 3.4 and Theorem 3.6]{girault}. In the $L^p$ setting the existence results and regularity estimates were proved in \cite[~Lemma 4.1 and Theorem 4.3]{amrouche2013}. Finally, regularity estimates on the vector potential in the context of Hölder spaces were obtained in \cite[~Theorem 2.1]{bolik} and also \cite[~Proposition 2.1]{bates} (for the higher order estimates). Note that in this discussion we have restricted our attention to normal vector potentials, but analogous results in the tangential case can also be found in the aforementioned references.
\end{proof}
\begin{proof}[Proof of Lemma \ref{mollificationlemma}]
First by Proposition \ref{streamfunction} we know there exists a stream function (vector potential) $\psi$ which satisfies estimate \eqref{streamfunctionestimate} and has the properties given in equation \eqref{streamfunctionproperties}. Next we introduce the following sets
\begin{align}
\widetilde{V}_\delta &\coloneqq \{ x \in \mathbb{R}^3 \lvert \; d(x, \partial \Omega) < \delta \}, \\
\widetilde{V}_{\delta , U} &\coloneqq  \{ x \in V_\delta \lvert \; d(x, U) < \delta \}.
\end{align}
We introduce a function $\phi_1 \in C^2_c (\mathbb{R}^3)$ such that $\text{supp} (\phi_1) \subset \widetilde{V_\delta}$. We then introduce a partition of unity $\rho_1, \ldots, \rho_m$ of the sets $V_{\delta, U_1}, \ldots, V_{\delta, U_m}$ (which cover the region near the boundary, as was done in section \ref{parametrisationsection}). Recall that we take $\delta > 0$ suitably small such that the normal vector $n$ can be extended inside the sets $V_{\delta, U_1}, \ldots, V_{\delta, U_m}$ in a unique fashion.

We now define the following decomposition
\begin{align*}
\psi &= \psi_b + \psi_i \coloneqq \phi_1 \psi + (1- \phi_1) \psi, \\
\psi_b &= \phi_1 \rho_1 \psi + \ldots + \phi_1 \rho_m \psi \eqqcolon \psi_1 + \ldots + \psi_m.
\end{align*}
We introduce a nonnegative radial mollifier $\varphi$ with support in $B_0 (1)$ and the property $\int_{\mathbb{R}^3} \varphi (x) dx = 1$. Moreover, we define
\begin{equation*}
\varphi_\epsilon (x) \coloneqq \frac{1}{\epsilon^3} \varphi \bigg( \frac{x}{\epsilon} \bigg).
\end{equation*}
Observe that $\psi_j \in C^{1, \alpha}_c (V_{\delta, U_j}) $ for $j=1, \ldots, m$. 

First we deal with the interior part of the velocity field, which we observe to have compact support in $\Omega$. We define the function
\begin{equation*}
\psi^\epsilon_i (x) \coloneqq \varphi_\epsilon * \psi_i \in C_c^\infty (\Omega),
\end{equation*}
for $\epsilon$ suitably small. Note that $\psi^\epsilon_i$ converges to $\psi_i$ in the $C^1 (\overline{\Omega})$ norm as $\epsilon \rightarrow 0$ by standard mollification estimates. Moreover, it holds that $\lVert \psi^\epsilon_i \rVert_{C^{1, \alpha} (\Omega )} \leq C \lVert \psi_i \rVert_{C^{1, \alpha} (\Omega)}$. Therefore $u_i^\epsilon \coloneqq \nabla \times \psi^\epsilon_i$ is the interior part of the mollified velocity and it satisfies the required properties (in particular, it is divergence-free).

Now we consider the boundary parts $\psi_1, \ldots, \psi_m$. In particular, we need to define an extension for these functions in order to prove the mollification estimates. We introduce the notation
\begin{equation*}
(\psi_j)_{\tau} \coloneqq \psi_j - (\psi_j \cdot n).
\end{equation*}
We consider an even/odd higher order reflection $\widetilde{\psi}$ of the form (see for example \cite[~Lemma 6.37]{gilbarg} and \cite[~Theorem 4.26]{adams}, as well as \cite{babich})
\begin{align}
(\widetilde{\psi}_j \cdot n) \lvert_{\widetilde{V}_{\delta , U_j}} (\sigma_1, \sigma_2, s) &= \begin{cases}
&(\psi_j (\sigma_1, \sigma_2, s) \cdot n) \quad \text{ if } s \geq 0, \\
&6 (\psi_j ( \sigma_1, \sigma_2, -s) \cdot n) - 32 (\psi_j ( \sigma_1, \sigma_2, - s/2) \cdot n) \\
&+ 27 (\psi_j ( \sigma_1, \sigma_2, - s/3) \cdot n) \quad  \text{ if } s \leq 0. \\
\end{cases} \\
(\widetilde{\psi}_j)_{\tau} \lvert_{\widetilde{V}_{\delta , U_j}} (\sigma_1, \sigma_2, s) &= \begin{cases}
&(\psi_j)_{\tau} (\sigma_1, \sigma_2, s) \text{ if } s \geq 0, \\
&-(\psi_j)_{\tau} ( \sigma_1, \sigma_2, -s) \text{ if } s \leq 0. \\
\end{cases}
\end{align}
Recall that we assume the normal to point outward. One can check that $\widetilde{\psi}_j \in C^{1, \alpha}_c (\widetilde{V}_{\delta , U_j})$ and it can be extended by zero outside $\widetilde{V}_{\delta , U_j}$, moreover we have $(\widetilde{\psi}_j)_{\tau} (\sigma_1, \sigma_2 , 0) = 0$ (by Proposition \ref{streamfunction}). We then observe that $\widetilde{\psi}_j^\epsilon \in C^\infty_c (\widetilde{V}_{\delta , U_j})$. The odd extension of $(\widetilde{\psi}_j)_{\tau}$ ensures that $(\widetilde{\psi}_j^\epsilon )_{\tau} (\sigma_1 , \sigma_2, 0) = 0$.

We now define the function
\begin{equation}
\widetilde{\psi}^\epsilon \coloneqq \psi_i^\epsilon + \sum_{j=1}^m \widetilde{\psi}_j^\epsilon.
\end{equation}
We prove that this function converges to $\psi$ in the $C^1 (\overline{\Omega})$ norm as $\epsilon \rightarrow 0$. It is easy to see that
\begin{align*}
\lVert \psi - \widetilde{\psi }^\epsilon \rVert_{C^1 (\overline{\Omega})} &\leq \lVert (1 - \phi_1) \psi - \psi_i^\epsilon \rVert_{C^1 (\overline{\Omega})} + \bigg\lVert \phi_1 \psi - \sum_{j=1}^m \widetilde{\psi}_j^\epsilon \bigg\rVert_{C^1 (\overline{\Omega})} \\
&\leq \lVert \psi_i - \psi_i^\epsilon \rVert_{C^1 (\Omega )} + \sum_{j=1}^m \lVert \phi_1 \rho_j \psi - \widetilde{\psi}_j^\epsilon \rVert_{C^1 (\overline{V}_{\delta,U_j})} \xrightarrow[]{\epsilon \rightarrow 0} 0.
\end{align*}
In addition, it holds that $\widetilde{\psi}^\epsilon \rightarrow \psi$ in $C^{1,\beta} (\Omega)$ for $\beta \in (0, \alpha)$ and $\lVert \widetilde{\psi}^\epsilon \rVert_{C^{1,\alpha} (\Omega)} \leq C \lVert \psi \rVert_{C^{1,\alpha} (\Omega)}$ for some constant $C$. Now we take $\widetilde{u}^\epsilon \coloneqq \nabla \times \widetilde{\psi}^\epsilon \in C^\infty_c (\mathbb{R}^3)$, which satisfies the divergence-free condition. We also get that $\Tilde{u}^\epsilon \rightarrow u $ in $C^{0,\beta} (\overline{\Omega})$ for $\beta \in (0,\alpha)$ and moreover, it holds that (using Proposition \ref{streamfunction})
\begin{equation*}
\lVert \Tilde{u}^\epsilon \rVert_{C^{0,\alpha} (\Omega)} \leq C \lVert \widetilde{\psi}^\epsilon \rVert_{C^{1,\alpha} (\Omega)} \leq C \lVert \psi \rVert_{C^{1,\alpha} (\Omega)} \leq C \lVert u \rVert_{C^{0,\alpha} (\Omega)}.
\end{equation*}
Since $(\widetilde{\psi}^\epsilon )_{\tau} \lvert_{\partial \Omega} = 0$, simple calculations using the Stokes theorem show that $(\nabla \times \widetilde{\psi}^\epsilon ) \cdot n \lvert_{\partial \Omega}= (u^\epsilon \cdot n) \lvert_{\partial \Omega} = 0$.  
\end{proof}
\begin{remark}
We remark that this result also holds if the boundary $\partial \Omega$ is $C^2$ instead of $C^4$. Lemma \ref{mollificationlemma} therefore resolves a problem raised in \cite[~Sections 1.3 and 2.3]{derosa}, namely whether one can obtain smooth approximations of the vector field for domains with $C^2$ boundary. The proof in Lemma \ref{mollificationlemma} explicitly relies on the vector stream function, instead of the Schauder estimates which are used in \cite{derosa} (which are the reason for the slightly stronger requirements on the regularity of $\partial \Omega$).
\end{remark}
\begin{corollary}
Suppose $u^\epsilon \in C^{2, \alpha} (\Omega)$, then by standard elliptic theory there exists a unique function $p^\epsilon \in C^{2, \alpha} (\Omega)$ such that 
\begin{align}
&-\Delta p^\epsilon = (\nabla \otimes \nabla) : (u^\epsilon \otimes u^\epsilon) \quad \text{ in } \Omega, \\
&\partial_n p^\epsilon = u^\epsilon \otimes u^\epsilon : \nabla n, \quad \text{on } \partial \Omega, \label{mollifiedboundarycondition} \\
&\int_\Omega p^\epsilon dx = 0. 
\end{align}
\end{corollary}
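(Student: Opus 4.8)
The plan is to treat this as what it is: a linear, inhomogeneous Neumann problem for the Laplacian with smooth data, for which solvability, uniqueness (after normalisation) and regularity are classical. The only ingredient that is not completely routine is the compatibility condition that makes the Neumann problem solvable, so I would dispatch that first and then invoke the standard machinery.

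\textbf{Step 1: the solvability (compatibility) condition.} Write $f \coloneqq (\nabla\otimes\nabla):(u^\epsilon\otimes u^\epsilon) = \partial_i\partial_j(u_i^\epsilon u_j^\epsilon)$ and $g \coloneqq (u^\epsilon\otimes u^\epsilon):\nabla n = u_i^\epsilon u_j^\epsilon\,\partial_i n_j$. By the divergence theorem the Neumann problem $-\Delta p^\epsilon = f$ in $\Omega$, $\partial_n p^\epsilon = g$ on $\partial\Omega$ is solvable only if $\int_\Omega f\,dx$ and $\int_{\partial\Omega} g\,dS$ satisfy the usual Gauss--Green compatibility identity (with the orientation of $n$ fixed as in \eqref{mollifiedboundarycondition}). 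To verify it I would use $\nabla\cdot u^\epsilon = 0$ to write $\partial_i(u_i^\epsilon u_j^\epsilon) = u_i^\epsilon\partial_i u_j^\epsilon$, so that $f = \partial_j(u_i^\epsilon\partial_i u_j^\epsilon)$; one integration by parts then gives $\int_\Omega f\,dx = \int_{\partial\Omega} u_i^\epsilon\partial_i u_j^\epsilon\,n_j\,dS$. Using the pointwise identity $u_i^\epsilon\partial_i u_j^\epsilon\,n_j = (u^\epsilon\cdot\nabla)(u^\epsilon\cdot n) - u_i^\epsilon u_j^\epsilon\,\partial_i n_j$ together with the fact, supplied by Lemma \ref{mollificationlemma}, that $u^\epsilon$ is tangential to $\partial\Omega$ — so that $u^\epsilon\cdot n$ vanishes identically on $\partial\Omega$ and hence $(u^\epsilon\cdot\nabla)(u^\epsilon\cdot n)|_{\partial\Omega}=0$, the derivative being taken along the tangential direction $u^\epsilon$ — one obtains $\int_\Omega f\,dx = -\int_{\partial\Omega} u_i^\epsilon u_j^\epsilon\,\partial_i n_j\,dS = -\int_{\partial\Omega} g\,dS$, which is exactly the required compatibility identity.

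\textbf{Step 2: solvability, normalisation and regularity.} With the compatibility identity in hand, the linear functional $\varphi \mapsto \langle f,\varphi\rangle_\Omega + \int_{\partial\Omega} g\varphi\,dS$ is well defined on $H^1(\Omega)/\mathbb{R}$, and Lax--Milgram applied to the bilinear form $(p,\varphi)\mapsto\int_\Omega\nabla p\cdot\nabla\varphi\,dx$ produces a weak solution, unique modulo additive constants; imposing $\int_\Omega p^\epsilon\,dx = 0$ selects a unique representative. Since $u^\epsilon\in C^\infty(\overline{\Omega})$ by Lemma \ref{mollificationlemma}, and $\partial\Omega\in C^3$ so that $n\in C^2$ and the coefficients appearing in \eqref{laplaciancoord} are $C^1$, interior elliptic regularity gives $p^\epsilon\in C^\infty(\Omega)$, in particular $p^\epsilon\in C^{2,\alpha}(\Omega)$, while the up-to-the-boundary Schauder estimates for the Neumann problem — as recalled in Appendices \ref{schauderappendix1}--\ref{schauderappendix2} and in \cite{gilbarg,ladyzhenskaya,hanbook} — make $p^\epsilon$ regular enough near $\partial\Omega$ for the boundary identity \eqref{mollifiedboundarycondition} to hold in the classical sense. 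Uniqueness in this class is then immediate: the difference of two such solutions is harmonic with vanishing normal derivative, hence constant, hence zero by the mean-zero normalisation.

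\textbf{Where the difficulty lies.} Everything except Step 1 is textbook elliptic theory. The only genuine point is the compatibility computation, and within it the observation that $(u^\epsilon\cdot\nabla)(u^\epsilon\cdot n)$ vanishes on $\partial\Omega$: this relies not merely on $u^\epsilon\cdot n$ vanishing pointwise but on its vanishing along the \emph{whole} boundary, so that all its tangential derivatives vanish there — precisely the property built into $u^\epsilon$ by the construction in Lemma \ref{mollificationlemma}. I would be careful that the two integrations by parts are legitimate, which they are since $u^\epsilon$ and $\partial\Omega$ are smooth, as this is the step where the divergence-free condition and the boundary condition are both consumed.
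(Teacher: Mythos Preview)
The paper does not provide a proof of this corollary at all: it is stated as an immediate consequence of ``standard elliptic theory'' and left at that. Your proposal is therefore not competing against any argument in the paper but rather supplying the details the authors chose to omit, and it does so correctly. In particular, your verification of the compatibility condition --- using $\nabla\cdot u^\epsilon=0$ to rewrite $f$, integrating by parts once, and then using the identity $u_i^\epsilon\partial_i u_j^\epsilon\,n_j = (u^\epsilon\cdot\nabla)(u^\epsilon\cdot n) - u_i^\epsilon u_j^\epsilon\,\partial_i n_j$ together with the tangentiality of $u^\epsilon$ from Lemma~\ref{mollificationlemma} --- is exactly the computation one needs, and your subsequent appeal to Lax--Milgram plus Schauder theory is the standard route the paper is implicitly invoking.

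One small caution on the regularity bookkeeping in Step~2: with $\partial\Omega\in C^3$ one has $n\in C^2$ and hence $\nabla n\in C^1$, so the Neumann datum $g=(u^\epsilon\otimes u^\epsilon):\nabla n$ is a priori only $C^1$ on $\partial\Omega$, not $C^{1,\alpha}$, which is what the classical Schauder theory for the Neumann problem would demand to conclude $p^\epsilon\in C^{2,\alpha}(\overline{\Omega})$ up to the boundary. This is a wrinkle the paper glosses over as well (and is in any case irrelevant to the rest of the argument, where only $C^{0,\alpha}$ bounds on $p^\epsilon$ are used), but if you want the statement exactly as written you should either note that $C^3\subset C^{2,\alpha}$ gives $\nabla n\in C^{1,\alpha'}$ for $\alpha'<1$ and adjust the exponent, or simply record that $p^\epsilon$ is classical and as regular as needed for the sequel.
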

The whole point of mollifying the velocity field is that it allows to find a candidate mollified pressure $p^\epsilon$ by using standard Schauder theory. Since the mollified candidate pressure lies in $C^{2, \alpha} (\Omega)$, it allows us to do many estimates more easily after which we can take the limit $\epsilon \rightarrow 0$. As has been noted in Remark \ref{boundaryregularityremark}, this approach requires the boundary to be sufficiently regular (we take $\partial \Omega$ to be $C^4$ in this paper). In particular, in order to be able to apply the standard Schauder theory, the boundary datum $u^\epsilon \otimes u^\epsilon : \nabla n$ needs to be sufficiently regular.
\section{Interior regularity estimate} \label{interiorsection}
We will derive estimates for the pressure separately in the interior of the domain and in the region near the boundary. We first establish the interior estimate. We now introduce several cutoff functions.
In order to separate the behaviour of the pressure near and far away from the boundary, we recall that in equation \eqref{phifunction} we defined the function $\phi$.
We consider the parameters $\delta_1, \delta_2$ and $\delta_3$ such that (for some small $\gamma < \delta$)
\begin{equation*}
0 < \delta_1 < \delta_2 - \gamma < \delta_3 < \delta - 2 \gamma.
\end{equation*}
We then introduce functions 
\begin{equation*}
\phi_i (s) = \begin{cases}
0 \text{ if } 0 \leq s \leq \delta_1, \\
1 \text{ if } s \geq \delta_2 - \gamma,
\end{cases} \quad \phi_b (s) = \begin{cases}
1 \text{ if } 0 \leq s < \delta_3 + \gamma, \\
0 \text{ if } s \geq \delta - \gamma.
\end{cases}
\end{equation*}
Observe that $\phi$ and $\phi_b$ are nonincreasing while $\phi_i$ is nondecreasing and that $\phi_i$ and $\phi_b$ are overlapping (and they are not a partition of unity). We will write (for $x \in \Omega$ sufficiently close to the boundary)
\begin{equation*}
\phi_i (x) \coloneqq \phi_i (d(x, \partial \Omega)), \quad \phi_b (x) \coloneqq \phi_b (d(x, \partial \Omega)).
\end{equation*}
Then we introduce the functions $\phi_{b,1}, \ldots, \phi_{b,m}$ through the definition
\begin{equation*}
\phi_{b,j} \coloneqq \phi_b \rho_j.
\end{equation*}
We then define the functions
\begingroup
\allowdisplaybreaks
\begin{align}
P^\epsilon (x) &= p^\epsilon (x) + \phi (x) (u^\epsilon (x) \cdot n(x))^2, \label{mollifiedpressure1} \\
P^\epsilon_i (x) &= \phi_i (x) P^\epsilon (x) = \phi_i (x) ( p^\epsilon (x) + \phi (x) (u^\epsilon (x) \cdot n(x))^2 ), \label{mollifiedpressure2} \\
P^\epsilon_{b} (x) &= \phi_b (x) P^\epsilon (x) = \phi_b (x) ( p^\epsilon (x) + (u^\epsilon (x) \cdot n(x))^2 ), \\
P^\epsilon_{b,j} (x) &= \rho_j (x) \phi_b (x) P^\epsilon (x) = \rho_j (x) \phi_b (x) ( p^\epsilon (x) + (u^\epsilon (x) \cdot n(x))^2 ). \label{mollifiedpressure3}
\end{align}
\endgroup
Note that we used that $\phi_b (x) \phi (x) = \phi_b (x)$ which holds by definition of the cutoff functions. 

We first prove an estimate for the interior pressure $P^\epsilon_i$. 
\begin{proposition} \label{interiorestimate}
Let $P^\epsilon_i$ be the function defined in equation \eqref{mollifiedpressure2}. The following estimate holds for the interior mollified pressure
\begin{equation} \label{interiorestimateeq}
\lVert P^\epsilon_i \rVert_{C^{0, \alpha} (\Omega)} \leq C_i \lVert u^\epsilon \otimes u^\epsilon \rVert_{C^{0, \alpha} (\Omega)} + D_i \lVert P^\epsilon \rVert_{L^\infty (\Omega)}.
\end{equation}
Note that the constants $C_i$ and $D_i$ are independent of $\epsilon$.
\end{proposition}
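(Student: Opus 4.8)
The plan is to exploit that, by the choice of cut-offs, $P^\epsilon_i=\phi_i P^\epsilon$ is a smooth function supported in $\{x\in\mathbb{R}^3 : d(x,\partial\Omega)\geq\delta_1\}$, which is a compact subset of $\Omega$. A compactly supported function equals the Newtonian potential of its own Laplacian (the difference being a harmonic function on $\mathbb{R}^3$ that decays at infinity, hence $\equiv 0$), so it is enough to write
\begin{equation*}
-\Delta P^\epsilon_i = \partial_k\partial_l f^\epsilon_{kl} + \partial_k g^\epsilon_k + h^\epsilon ,
\end{equation*}
(summation convention) with $f^\epsilon_{kl},g^\epsilon_k,h^\epsilon$ compactly supported in $\Omega$ and
\begin{equation*}
\lVert f^\epsilon \rVert_{C^{0,\alpha}} \leq C\lVert u^\epsilon\otimes u^\epsilon\rVert_{C^{0,\alpha}}, \qquad \lVert g^\epsilon\rVert_{L^\infty}+\lVert h^\epsilon\rVert_{L^\infty}\leq C\big(\lVert u^\epsilon\otimes u^\epsilon\rVert_{C^{0,\alpha}}+\lVert P^\epsilon\rVert_{L^\infty}\big).
\end{equation*}
Then the Schauder-type estimates of appendices \ref{schauderappendix1}--\ref{schauderappendix2} apply: a double-divergence source with $C^{0,\alpha}$ density gives a $C^{0,\alpha}$ contribution to the potential, while a divergence or a zeroth-order source with merely bounded density gives a contribution in $C^{1,\beta}$ for every $\beta<1$, a fortiori in $C^{0,\alpha}$. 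Collecting these yields \eqref{interiorestimateeq} with $C_i,D_i$ depending only on the cut-offs, i.e. on $\Omega$, and on $\alpha$, hence not on $\epsilon$.

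To produce the above form, I would split $P^\epsilon_i=\phi_i p^\epsilon+\phi_i\phi\,(u^\epsilon\cdot n)^2$. The second summand needs no potential theory: writing $(u^\epsilon\cdot n)^2=(u^\epsilon\otimes u^\epsilon):(n\otimes n)$ and using that $n\in C^2$ on a neighbourhood of $\mathrm{supp}\,\phi$ with norm controlled by $\Omega$ alone, one gets $\lVert \phi_i\phi\,(u^\epsilon\cdot n)^2\rVert_{C^{0,\alpha}}\leq C\lVert u^\epsilon\otimes u^\epsilon\rVert_{C^{0,\alpha}}$, which is simply added at the end. For $w:=\phi_i p^\epsilon$ I compute $-\Delta w=\phi_i(-\Delta p^\epsilon)-2\nabla\phi_i\cdot\nabla p^\epsilon-p^\epsilon\Delta\phi_i$, substitute $-\Delta p^\epsilon=\partial_k\partial_l(u^\epsilon_k u^\epsilon_l)$, and use the Leibniz rule to peel off every derivative that would otherwise land on $p^\epsilon$ or on $u^\epsilon\otimes u^\epsilon$:
\begin{align*}
\phi_i\,\partial_k\partial_l(u^\epsilon_k u^\epsilon_l) &= \partial_k\partial_l(\phi_i\, u^\epsilon_k u^\epsilon_l) - 2\partial_l\big((\partial_k\phi_i)\, u^\epsilon_k u^\epsilon_l\big) + (\partial_k\partial_l\phi_i)\, u^\epsilon_k u^\epsilon_l, \\
-2\nabla\phi_i\cdot\nabla p^\epsilon &= -2\partial_k\big((\partial_k\phi_i)\, p^\epsilon\big) + 2(\Delta\phi_i)\, p^\epsilon .
\end{align*}
This exhibits a single genuine double-divergence term, with density $f^\epsilon_{kl}=\phi_i u^\epsilon_k u^\epsilon_l\in C^{0,\alpha}$ of the right size; all remaining terms are divergences $\partial_k(\,\cdot\,)$ or zeroth-order terms whose densities are a bounded smooth function (a derivative of $\phi_i$) times either $u^\epsilon_k u^\epsilon_l$ or $p^\epsilon$.

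The one delicate point is that $p^\epsilon$ is not controlled in $C^{0,\alpha}$ uniformly in $\epsilon$ — that is exactly what the paper sets out to prove — so the $p^\epsilon$-terms must be rendered harmless, and this rests on two facts. First, $\nabla\phi_i$ and $\Delta\phi_i$ are supported in $\{\delta_1\leq d(x,\partial\Omega)\leq\delta_2-\gamma\}$, where $\phi\equiv1$, so there $p^\epsilon=P^\epsilon-(u^\epsilon\cdot n)^2$, whence $\lVert p^\epsilon\rVert_{L^\infty(\mathrm{supp}\,\nabla\phi_i)}\leq \lVert P^\epsilon\rVert_{L^\infty}+C\lVert u^\epsilon\otimes u^\epsilon\rVert_{C^{0,\alpha}}$ (again via the contraction with $n\otimes n$). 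Second, $p^\epsilon$ occurs only in the single-divergence position $\partial_k((\partial_k\phi_i)p^\epsilon)$ and the zeroth-order position $(\Delta\phi_i)p^\epsilon$, for which the potential estimate needs only an $L^\infty$ bound on the density. Hence $g^\epsilon$ and $h^\epsilon$ obey the stated bounds, the Schauder estimate applies to $w$, and adding back $\lVert \phi_i\phi(u^\epsilon\cdot n)^2\rVert_{C^{0,\alpha}}$ gives \eqref{interiorestimateeq} with $\epsilon$-independent constants. I expect the only real labour to be the careful Leibniz bookkeeping that leaves no derivative on $p^\epsilon$, together with the book-keeping of $\epsilon$-uniformity of all constants; the conceptually important move is to treat $(u^\epsilon\cdot n)^2$ as the contraction $(u^\epsilon\otimes u^\epsilon):(n\otimes n)$ rather than as the square of $u^\epsilon\cdot n$, so that the right-hand side is $\lVert u^\epsilon\otimes u^\epsilon\rVert_{C^{0,\alpha}}$ and not $\lVert u^\epsilon\rVert_{C^{0,\alpha}}^2$. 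The potential-theoretic input itself is standard and is quarantined in the appendices.
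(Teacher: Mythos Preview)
Your proof is correct and follows essentially the same strategy as the paper: both exploit that $P^\epsilon_i$ has compact support in $\Omega$, rewrite its Laplacian so that the only double-divergence density is a $C^{0,\alpha}$ function built from $u^\epsilon\otimes u^\epsilon$ while the remaining sources involve $P^\epsilon$ (or $p^\epsilon$) only in $L^\infty$, and then invoke the Newtonian-potential/Schauder estimate of Appendix~\ref{schauderappendix1}. The only difference is organizational: the paper splits the \emph{source} into two pieces $P^\epsilon_{i,1},P^\epsilon_{i,2}$ and treats $\Delta(\phi(u^\epsilon\cdot n)^2)$ as part of the double-divergence forcing, whereas you split the \emph{function} and estimate $\phi_i\phi(u^\epsilon\cdot n)^2=(\phi_i\phi)\,(u^\epsilon\otimes u^\epsilon):(n\otimes n)$ directly without any potential theory---a mild simplification---and you are more explicit than the paper about the Leibniz step $-2\nabla\phi_i\cdot\nabla p^\epsilon=-2\partial_k((\partial_k\phi_i)p^\epsilon)+2(\Delta\phi_i)p^\epsilon$ that keeps derivatives off $p^\epsilon$.
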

\begin{proof}
We calculate that $P^\epsilon_i$ satisfies the equation
\begin{align*}
-\Delta P^\epsilon_i &= - (\Delta \phi_i) P^\epsilon - 2  (\nabla \phi_i) \cdot \nabla P^\epsilon - \phi_i \Delta P^\epsilon \\
&= - (\Delta \phi_i) P^\epsilon - 2  (\nabla \phi_i) \cdot \nabla P^\epsilon - \phi_i (\Delta (\phi (u^\epsilon \cdot n)^2) + \Delta p^\epsilon) \\
&= - (\Delta \phi_i) P^\epsilon - 2  (\nabla \phi_i) \cdot \nabla P^\epsilon - \phi_i \big[ \Delta (\phi (u^\epsilon \cdot n)^2) - (\nabla \otimes \nabla) : (u^\epsilon \otimes u^\epsilon) \big].
\end{align*}
We then decompose the interior pressure as $P^\epsilon_{i,1}$ and $P^\epsilon_{i,2}$ which satisfy that
\begin{align*}
&\begin{cases} -\Delta P^\epsilon_{i,1} = - (\Delta \phi_i) P^\epsilon - 2  (\nabla \phi_i) \cdot \nabla P^\epsilon \quad \text{on } \mathbb{R}^3 , 
\end{cases} \\
&\begin{cases}
-\Delta P^\epsilon_{i,2} = - \phi_i \big[ \Delta (\phi (u^\epsilon \cdot n)^2) - (\nabla \otimes \nabla) : (u^\epsilon \otimes u^\epsilon) \big] \quad \text{on } \Omega \backslash V_{\delta_1},  \\
P^\epsilon_{i,2} = - P^\epsilon_{i,1} \quad \text{on } \partial (\Omega \backslash V_{\delta_1}).
\end{cases}
\end{align*}
Note that we can homogenise the boundary condition for $P^\epsilon_{i,2}$ by changing the forcing. We establish estimate \eqref{interiorestimateeq} separately for $P^\epsilon_{i,1}$ and $P^\epsilon_{i,2}$. We recall that the Green's function of the operator $-\Delta$ in $\mathbb{R}^3$ is given by
\begin{equation*}
G(x) \coloneqq \frac{1}{4 \pi \lvert x \rvert}.
\end{equation*}
This means that $P^\epsilon_{i,1}$ is given by
\begin{align*}
P^\epsilon_{i,1} &= \frac{1}{4 \pi \lvert x \rvert} * (- (\Delta \phi_i) P^\epsilon - 2  (\nabla \phi_i) \cdot \nabla P^\epsilon ).
\end{align*}
which allows us to conclude estimate \eqref{interiorestimateeq} for $P^\epsilon_{i,1}$, by proceeding similarly as in \cite[~Theorem 13.1.1]{jostPDE}.

By the Schauder estimate for the Dirichlet problem established in Theorem \ref{dirichletschauderestimate} we know that $P^\epsilon_{i,2}$ also satisfies estimate \eqref{interiorestimateeq}, which concludes the proof. 
\end{proof}
Now we move on to establishing the estimates for the boundary layer pressure $P^\epsilon_{b}$.
\section{Trace lemma} \label{tracesection}
It follows that $P_{b,j}^\epsilon$ satisfies the following equation
\begingroup
\allowdisplaybreaks
\begin{equation} \label{boundaryequation}
-\Delta P^\epsilon_{b,j} = - \Delta (\phi_b \rho_j) P^\epsilon - 2  \nabla (\phi_b \rho_j) \cdot \nabla P^\epsilon + \phi_b \rho_j \bigg((\nabla \otimes \nabla) : (u^\epsilon \otimes u^\epsilon) - \Delta \big( (u^\epsilon \cdot n)^2 \big)  \bigg).
\end{equation}
\endgroup
Now we need to write this equation in terms of the local coordinate system \eqref{transform1}-\eqref{transform3} in the region $\overline{V}_{\delta, U_j}$.
\begin{lemma}
Assume that $x \in \overline{V}_{\delta, U_j}$, then we have the following expression of equation \eqref{boundaryequation} (in terms of the local coordinates $(\sigma_1, \sigma_2, s)$ defined in equations \eqref{transform1}-\eqref{transform3})
\begin{align}
&-\Delta_\tau P^\epsilon_{b,j} - \frac{1}{b} \frac{\partial b}{\partial s} \frac{\partial P^\epsilon_{b,j}}{\partial s} - \frac{\partial^2 P^\epsilon_{b,j}}{\partial s^2} = - \Delta (\phi_b \rho_j) P^\epsilon - 2 \nabla_\tau (\phi_b \rho_j) \cdot \nabla_\tau P^\epsilon - 2 \partial_s (\phi_b \rho_j) \frac{\partial P^\epsilon}{\partial s} \nonumber \\
&+ \phi_b \rho_j \bigg( \frac{1}{b} \bigg[ \sum_{i,j=1}^2 \frac{\partial^2}{\partial \sigma_i \partial \sigma_j} \big( b u^\epsilon_i u^\epsilon_j \big) + 2 \sum_{i=1}^2 \frac{\partial^2}{\partial \sigma_i \partial s} \big( b u^\epsilon_i (u^\epsilon \cdot n) \big) + \frac{\partial^2}{\partial s^2} \big( b (u^\epsilon \cdot n)^2 \big) \bigg] \nonumber \\
&- \Delta_\tau \big( (u^\epsilon \cdot n)^2 \big) - \frac{1}{b} \frac{\partial b}{\partial s} \frac{\partial ( (u^\epsilon \cdot n)^2)}{\partial s} - \frac{\partial^2 ( (u^\epsilon \cdot n)^2)}{\partial s^2} \bigg), \label{boundaryeqcoords}
\end{align}
where the differential operator $\nabla_\tau$ has been introduced in equation \eqref{gradcoord}.
\end{lemma}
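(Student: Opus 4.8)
The statement is purely a change-of-variables computation: rewrite the Cartesian equation \eqref{boundaryequation} for $P^\epsilon_{b,j}$ in the coordinates $(\sigma_1,\sigma_2,s)$ of \eqref{transform1}--\eqref{transform3}, using the formulae for $\nabla$, $\nabla\cdot$ and $\Delta$ recorded in Section \ref{parametrisationsection} together with the simplifications \eqref{metricsimplification} coming from $a_{33}=1$, $a_{13}=a_{23}=0$. The plan is to treat the left-hand side and each of the three groups of terms on the right-hand side in turn.

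First I would handle the left-hand side: apply \eqref{laplaciancoord}, i.e. $\Delta P^\epsilon_{b,j} = \Delta_\tau P^\epsilon_{b,j} + \tfrac{1}{b}\tfrac{\partial b}{\partial s}\tfrac{\partial P^\epsilon_{b,j}}{\partial s} + \tfrac{\partial^2 P^\epsilon_{b,j}}{\partial s^2}$, which immediately produces the three terms on the left of \eqref{boundaryeqcoords}. Second, the lower-order terms $-(\Delta\phi_b\rho_j)P^\epsilon$ transform as a scalar and need no change; the term $-2(\nabla\phi_b\rho_j)\cdot\nabla P^\epsilon$ must be split into its tangential and normal parts. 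Since the inner product is the one induced by the metric $a_{ij}$ and, by \eqref{metricsimplification}, the metric is block-diagonal with a pure $\partial_s$-block equal to $1$, this inner product decomposes cleanly as $\nabla_\tau(\phi_b\rho_j)\cdot\nabla_\tau P^\epsilon + \partial_s(\phi_b\rho_j)\,\partial_s P^\epsilon$, where $\nabla_\tau f := \tfrac{\partial f}{\partial\sigma_1}(a_{11}\tau_1+a_{21}\tau_2) + \tfrac{\partial f}{\partial\sigma_2}(a_{12}\tau_1+a_{22}\tau_2)$ (this is the definition to be stated in the proof, read off from the gradient formula by dropping the $\partial_s$ contribution). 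That accounts for the $-2\nabla_\tau(\phi_b\rho_j)\cdot\nabla_\tau P^\epsilon - 2\partial_s(\phi_b\rho_j)\tfrac{\partial P^\epsilon}{\partial s}$ terms.

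Third, and this is the only genuinely substantive step, the forcing $(\nabla\otimes\nabla):(u^\epsilon\otimes u^\epsilon)$ must be rewritten. I would use that $(\nabla\otimes\nabla):(u^\epsilon\otimes u^\epsilon) = \nabla\cdot\big(\nabla\cdot(u^\epsilon\otimes u^\epsilon)\big) = \nabla\cdot\big((u^\epsilon\cdot\nabla)u^\epsilon\big)$, and express the vector field $(u^\epsilon\cdot\nabla)u^\epsilon$ in the $(\tau_1,\tau_2,n)$-frame, writing $u^\epsilon = u^\epsilon_1\tau_1 + u^\epsilon_2\tau_2 + (u^\epsilon\cdot n)\,n$ in these coordinates. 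Applying the divergence formula $\nabla\cdot v = \tfrac1b[\partial_{\sigma_1}(bv_1)+\partial_{\sigma_2}(bv_2)+\partial_s(bv_3)]$ twice, and collecting terms according to which pair of coordinate derivatives $(\partial_{\sigma_i}\partial_{\sigma_j}, \partial_{\sigma_i}\partial_s, \partial_s^2)$ appears, yields exactly $\tfrac1b\big[\sum_{i,j=1}^2\partial_{\sigma_i}\partial_{\sigma_j}(bu^\epsilon_iu^\epsilon_j) + 2\sum_{i=1}^2\partial_{\sigma_i}\partial_s(bu^\epsilon_i(u^\epsilon\cdot n)) + \partial_s^2(b(u^\epsilon\cdot n)^2)\big]$; the cross terms combine into the factor $2$ because $(a_{ij})$ is symmetric and $a_{33}=1$. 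Finally the term $-\Delta\big((u^\epsilon\cdot n)^2\big)$ is again rewritten by \eqref{laplaciancoord} into the last line of \eqref{boundaryeqcoords}. Multiplying the $u^\epsilon$-forcing terms by the cutoff $\phi_b\rho_j$ and assembling gives the claimed identity.

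The main obstacle is bookkeeping rather than depth: one must be careful that the divergence formula involves the \emph{contravariant} components of $v$, so the identification of the components of $(u^\epsilon\cdot\nabla)u^\epsilon$ with $u^\epsilon_iu^\epsilon_j$ and $u^\epsilon_i(u^\epsilon\cdot n)$ relies on the block structure \eqref{metricsimplification} of the metric (which makes the normal component genuinely $u^\epsilon\cdot n$ with no metric correction); and one must verify that the lower-order terms generated by differentiating $b$ and the basis vectors are precisely those absorbed into the first bracketed expression, with nothing left over. A clean way to organise this is to note that $(\nabla\otimes\nabla):(u^\epsilon\otimes u^\epsilon)$ is coordinate-independent as a scalar, so it suffices to verify the $\tfrac1b$-expression equals the standard double-divergence in the new chart, term by term in the order of differentiation in $s$.
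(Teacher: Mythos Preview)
Your proposal is correct and follows essentially the same approach as the paper's proof: apply \eqref{laplaciancoord} to both Laplacian terms, split the gradient inner product $(\nabla\phi_b\rho_j)\cdot\nabla P^\epsilon$ into tangential and normal parts using the block structure \eqref{metricsimplification} (with the identical definition of $\nabla_\tau$), and rewrite $(\nabla\otimes\nabla):(u^\epsilon\otimes u^\epsilon)$ via the iterated divergence formula in the new coordinates. The paper simply asserts the double-divergence identity directly; your detour through $(u^\epsilon\cdot\nabla)u^\epsilon$ is not actually used when you then ``apply the divergence formula twice'' to $u^\epsilon\otimes u^\epsilon$, so it can be dropped without loss.
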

\begin{proof}
In the coordinate system \eqref{transform1}-\eqref{transform3} we can write the divergence as follows (by using equation \eqref{divnewcoordinates})
\begingroup
\allowdisplaybreaks
\begin{align} \label{doubledivergence}
&(\nabla \otimes \nabla) : (u^\epsilon \otimes u^\epsilon) = \frac{1}{b} \bigg[ \sum_{i,j=1}^2 \frac{\partial^2}{\partial \sigma_i \partial \sigma_j} \big( b u^\epsilon_i u^\epsilon_j \big) + 2 \sum_{i=1}^2 \frac{\partial^2}{\partial \sigma_i \partial s} \big( b u^\epsilon_i (u^\epsilon \cdot n) \big) + \frac{\partial^2}{\partial s^2} \big( b (u^\epsilon \cdot n)^2 \big) \bigg].
\end{align}
\endgroup
Moreover, we compute that (by using equation \eqref{laplaciancoord})
\begin{align*}
\Delta P^\epsilon_{b,j} &= \Delta_\tau P^\epsilon_{b,j} + \frac{1}{b} \frac{\partial b}{\partial s} \frac{\partial P^\epsilon_{b,j}}{\partial s} + \frac{\partial^2 P^\epsilon_{b,j}}{\partial s^2}, \\
\Delta \big( (u^\epsilon \cdot n)^2 \big) &= \Delta_\tau \big( (u^\epsilon \cdot n)^2 \big) + \frac{1}{b} \frac{\partial b}{\partial s} \frac{\partial ( (u^\epsilon \cdot n)^2)}{\partial s} + \frac{\partial^2 ( (u^\epsilon \cdot n)^2)}{\partial s^2}. 
\end{align*}
We can now compute the gradient of the pressure to be (using equation \eqref{gradcoord})
\begin{align*}
&\nabla P^\epsilon = \frac{\partial P^\epsilon}{\partial \sigma_1} \big( a_{11} \tau_1 + a_{21} \tau_2  \big) + \frac{\partial P^\epsilon}{\partial \sigma_2} \big( a_{12} \tau_1 + a_{22} \tau_2  \big) + \frac{\partial P^\epsilon}{\partial s}  n = \nabla_\tau P^\epsilon + \frac{\partial P^\epsilon}{\partial s}  n,
\end{align*}
and similarly we can compute the gradient of $\phi_b \rho_j$. Then by using the orthogonality of $n$ with respect to $\tau_1$ and $\tau_2$, we find that
\begin{equation*}
\nabla (\phi_b \rho_j) \cdot \nabla P^\epsilon = \nabla_\tau (\phi_b \rho_j) \cdot \nabla_\tau P^\epsilon + \partial_s (\phi_b \rho_j) \frac{\partial P^\epsilon}{\partial s}. 
\end{equation*}
These calculations allow us to express equation \eqref{boundaryequation} as follows
\begin{align*}
&-\Delta_\tau P^\epsilon_{b,j} - \frac{1}{b} \frac{\partial b}{\partial s} \frac{\partial P^\epsilon_{b,j}}{\partial s} - \frac{\partial^2 P^\epsilon_{b,j}}{\partial s^2} = - \Delta (\phi_b \rho_j) P^\epsilon - 2 \nabla_\tau (\phi_b \rho_j) \cdot \nabla_\tau P^\epsilon - 2 \partial_s (\phi_b \rho_j) \frac{\partial P^\epsilon}{\partial s} \\
&+ \phi_b \rho_j \bigg( \frac{1}{b} \bigg[ \sum_{i,j=1}^2 \frac{\partial^2}{\partial \sigma_i \partial \sigma_j} \big( b u^\epsilon_i u^\epsilon_j \big) + 2 \sum_{i=1}^2 \frac{\partial^2}{\partial \sigma_i \partial s} \big( b u^\epsilon_i (u^\epsilon \cdot n) \big) + \frac{\partial^2}{\partial s^2} \big( b (u^\epsilon \cdot n)^2 \big) \bigg] \\
&- \Delta_\tau \big( (u^\epsilon \cdot n)^2 \big) - \frac{1}{b} \frac{\partial b}{\partial s} \frac{\partial ( (u^\epsilon \cdot n)^2)}{\partial s} - \frac{\partial^2 ((u^\epsilon \cdot n)^2)}{\partial s^2} \bigg).
\end{align*}
\end{proof}
It is crucial to observe that the definition of the mollified modified pressure $P^\epsilon$ in equation \eqref{mollifiedpressure1} (i.e. combining the term $(u^\epsilon \cdot n)^2$ as part of $P^\epsilon$) has the consequence that on the right-hand side of equation \eqref{boundaryeqcoords} there are no terms which have second order derivatives in $s$ (after some further manipulations, which will be done in the proof of the next lemma). This makes it possible to establish the following trace lemma, which will be crucial for the proof of the final regularity result (Theorem \ref{pressureregularity}) and proving that the solution obtained attains the very weak boundary condition \eqref{veryweakboundcondeq}. 
\begin{lemma}[Trace lemma] \label{tracelemma}
The following equation holds for $\partial_s P^\epsilon_b$ for every region $\overline{V}_{\delta, U_j}$
\begin{equation} \label{traceequation}
\partial_{s} P^\epsilon_{b,j} (\cdot, \cdot, s) = \Lambda^\epsilon_j ( \cdot, \cdot, s) + \int_{s}^\delta \Theta^\epsilon_j (\cdot, \cdot, s') d s',
\end{equation}
where $\Lambda^\epsilon$ and $\Theta^\epsilon$ (which are specified in the proof) satisfy the local estimates
\begingroup
\allowdisplaybreaks
\begin{align}
\lVert \Lambda^\epsilon_j \rVert_{C^{0, \alpha} ( [0, \delta], H^{-1} (U_j)) } &\leq C_b \lVert u^\epsilon \otimes u^\epsilon \rVert_{C^{0, \alpha} (\Omega)} + D_b \lVert P^\epsilon \rVert_{L^\infty (\Omega)}, \label{traceestimate1} \\
\lVert \Theta^\epsilon \rVert_{C^{0, \alpha} ([0, \delta], H^{-2} (U_j))} &\leq C_b \lVert u^\epsilon \otimes u^\epsilon \rVert_{C^{0, \alpha} (\Omega)} + D_b \lVert P^\epsilon \rVert_{L^\infty (\Omega)}. \label{traceestimate2}
\end{align}
\endgroup
These estimates can then be put together to yield a global estimate for $\Lambda^\epsilon$ and $\Theta^\epsilon$ for the region near the boundary.
\end{lemma}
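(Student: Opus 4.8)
The plan is to treat equation \eqref{boundaryeqcoords} as a second-order differential equation in the normal variable $s$, with coefficients and source given by tangential differential operators (and functions) over the fixed patch $U_j$, and to integrate it exactly once in $s$. First I would rewrite the two normal-derivative terms on its left-hand side as $\frac{\partial^2 P^\epsilon_{b,j}}{\partial s^2}+\frac{1}{b}\frac{\partial b}{\partial s}\frac{\partial P^\epsilon_{b,j}}{\partial s}=\frac{1}{b}\frac{\partial}{\partial s}\left(b\,\frac{\partial P^\epsilon_{b,j}}{\partial s}\right)$, so that \eqref{boundaryeqcoords} takes the form $\frac{1}{b}\frac{\partial}{\partial s}\left(b\,\frac{\partial P^\epsilon_{b,j}}{\partial s}\right)=-\Delta_\tau P^\epsilon_{b,j}-R^\epsilon_j$, where $R^\epsilon_j$ denotes the right-hand side of \eqref{boundaryeqcoords}. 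The strategy is then to massage $-b\,\Delta_\tau P^\epsilon_{b,j}-b\,R^\epsilon_j$ into a sum of (i) total normal derivatives $\partial_s(\cdot)$ and (ii) terms carrying only tangential derivatives, of order at most two, of $u^\epsilon\otimes u^\epsilon$ or of $P^\epsilon_{b,j}$, together with bare copies of these; then integrate from $s$ to $\delta$ and read off the two pieces $\Lambda^\epsilon_j$ and $\Theta^\epsilon_j$ (all of this being classical, since $u^\epsilon$ is smooth and $p^\epsilon\in C^{2,\alpha}(\Omega)$ by the corollary).

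Two algebraic manipulations make this possible and constitute the heart of the argument. The first is the cancellation of the second normal derivatives, which is the very reason $(u^\epsilon\cdot n)^2$ was absorbed into $P^\epsilon_{b,j}$: on the right-hand side of \eqref{boundaryeqcoords}, the term $\frac{\phi_b\rho_j}{b}\,\frac{\partial^2}{\partial s^2}\left(b\,(u^\epsilon\cdot n)^2\right)$ coming from the coordinate form of $(\nabla\otimes\nabla):(u^\epsilon\otimes u^\epsilon)$ and the term $-\phi_b\rho_j\,\frac{\partial^2}{\partial s^2}(u^\epsilon\cdot n)^2$ coming from $-\Delta\left((u^\epsilon\cdot n)^2\right)$ combine, by the product rule, into $\phi_b\rho_j\left(\frac{1}{b}\frac{\partial^2 b}{\partial s^2}(u^\epsilon\cdot n)^2+\frac{2}{b}\frac{\partial b}{\partial s}\frac{\partial(u^\epsilon\cdot n)^2}{\partial s}\right)$, which contains at most one normal derivative of $(u^\epsilon\cdot n)^2$. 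The second is the elimination of $\nabla P^\epsilon$ from the commutator terms: writing $-\left(\Delta(\phi_b\rho_j)\right)P^\epsilon-2\nabla(\phi_b\rho_j)\cdot\nabla P^\epsilon=\left(\Delta(\phi_b\rho_j)\right)P^\epsilon-2\,\nabla\cdot\left((\nabla(\phi_b\rho_j))\,P^\epsilon\right)$ leaves only bare $P^\epsilon$ and a divergence of $P^\epsilon$ times smooth geometric factors, whose normal component is again a total $s$-derivative $\frac{1}{b}\partial_s\left(b\,\partial_s(\phi_b\rho_j)\,P^\epsilon\right)$ and whose tangential components carry a single tangential derivative of a bounded quantity. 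The mixed terms $2\sum_i\partial_{\sigma_i}\partial_s\left(b\,u^\epsilon_i(u^\epsilon\cdot n)\right)$ are rewritten as $2\,\partial_s\left(\phi_b\rho_j\,\partial_{\sigma_i}(b\,u^\epsilon_i(u^\epsilon\cdot n))\right)$ minus a term carrying one tangential derivative and one factor $\partial_s(\phi_b\rho_j)$; and any residual normal derivative falling on a $u^\epsilon$-factor — such as the $\frac{\partial(u^\epsilon\cdot n)^2}{\partial s}$ produced by the first manipulation — is removed by an integration by parts in $s$ inside $\int_s^\delta(\cdot)\,ds'$, legitimate since the coefficients multiplying it are $C^1$ in $s$.

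After these reductions the equation reads $\partial_s\left(b\,\partial_s P^\epsilon_{b,j}-\Xi^\epsilon_j\right)=b\,\widetilde{R}^\epsilon_j$, where $\Xi^\epsilon_j$ gathers the quantities under total normal derivatives and $\widetilde{R}^\epsilon_j$ the remainder. Since all the cutoffs $\phi_b\rho_j$ and their derivatives vanish at $s=\delta$, we have $\partial_s P^\epsilon_{b,j}(\cdot,\cdot,\delta)=0$ and $\Xi^\epsilon_j(\cdot,\cdot,\delta)=0$; integrating from $s$ to $\delta$ and dividing by $b$ (which is smooth in $s$, $C^1$ in the tangential variables, and bounded away from zero near the boundary) yields a representation of the form \eqref{traceequation}, with $\Lambda^\epsilon_j$ collecting the pointwise-in-$s$ contributions (namely $b^{-1}\Xi^\epsilon_j$ and the boundary terms generated by the integrations by parts in $s$) and $\int_s^\delta\Theta^\epsilon_j\,ds'$ collecting the rest, the bounded smooth factors $b^{\pm1}$ being harmless for the estimates. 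One then checks \eqref{traceestimate1}--\eqref{traceestimate2} term by term: every term of $\Lambda^\epsilon_j$ carries at most one tangential derivative, and every term of $\Theta^\epsilon_j$ at most two (the worst being $\phi_b\rho_j\sum_{i,j}\partial_{\sigma_i}\partial_{\sigma_j}(b\,u^\epsilon_iu^\epsilon_j)$ and $\Delta_\tau P^\epsilon_{b,j}$), so pairing against a test function in $H^1_0(U_j)$, respectively $H^2_0(U_j)$, and moving all tangential derivatives onto that test function and onto the smooth cutoffs by integration by parts — with no boundary terms, because $\rho_j$ is compactly supported in $U_j$ and the integrations by parts in $s$ use the vanishing of the cutoffs at $s=\delta$ — leaves bare $u^\epsilon\otimes u^\epsilon$ and bare $P^\epsilon$, giving the $H^{-1}(U_j)$, respectively $H^{-2}(U_j)$, bounds by $\lVert u^\epsilon\otimes u^\epsilon\rVert_{C^{0,\alpha}(\Omega)}$ and $\lVert P^\epsilon\rVert_{L^\infty(\Omega)}$ with constants depending only on $\alpha$, on the $C^1$ norms of the geometric coefficients $b$ and $a_{ij}$, and on the fixed cutoffs — hence independent of $\epsilon$. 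The H\"older-$\alpha$ continuity in $s$ is inherited from that of $u^\epsilon$, of $P^\epsilon$, and of the geometric coefficients, together with the fact that $s\mapsto\int_s^\delta(\cdot)\,ds'$ is Lipschitz.

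I expect the main obstacle to be precisely the cancellation of the second normal derivatives together with the attendant bookkeeping. Were $(u^\epsilon\cdot n)^2$ not part of the pressure, a single $s$-integration would leave $\partial_s\left((u^\epsilon\cdot n)^2\right)$ in the formula for $\partial_s P^\epsilon_{b,j}$, and this quantity is not controlled by $\lVert u^\epsilon\otimes u^\epsilon\rVert_{C^{0,\alpha}}$ uniformly in $\epsilon$ — it is exactly the object that fails to have a limit as $\epsilon\to0$ when $\alpha\le\tfrac12$, as the example of section \ref{examplesection} demonstrates. Ensuring that every normal derivative is either a total derivative or removable by an integration by parts in $s$, that no tangential derivative of order higher than two survives, and that the divergence trick truly removes all copies of $\nabla P^\epsilon$ near the boundary (where interior elliptic estimates are unavailable), is the delicate part of the proof. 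The hypothesis $\partial\Omega\in C^3$ enters here through $J\in C^1$, hence $b,a_{ij}\in C^1$ in the tangential variables, which is the minimal regularity allowing the two tangential integrations by parts, and the $s$-integration by parts, to be carried out.
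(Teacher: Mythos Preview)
Your proposal is correct and follows essentially the same route as the paper: isolate the second normal derivative of $P^\epsilon_{b,j}$, exploit the cancellation between $\frac{\phi_b\rho_j}{b}\partial_s^2(b(u^\epsilon\cdot n)^2)$ and $-\phi_b\rho_j\,\partial_s^2(u^\epsilon\cdot n)^2$ so that no second-order $s$-derivative survives on the right, integrate once from $s$ to $\delta$ using the vanishing of the cutoffs there, and integrate by parts in $s$ to remove any residual first-order normal derivatives, leaving $\Lambda^\epsilon_j$ with at most one tangential derivative and $\Theta^\epsilon_j$ with at most two. The only cosmetic difference is that you package $\partial_s^2 P^\epsilon_{b,j}+\frac{1}{b}\partial_s b\,\partial_s P^\epsilon_{b,j}$ as $\frac{1}{b}\partial_s(b\,\partial_s P^\epsilon_{b,j})$ before integrating and divide by $b$ at the end, whereas the paper moves $\frac{1}{b}\partial_s b\,\partial_s P^\epsilon_{b,j}$ to the right-hand side and integrates it by parts afterward; both lead to the same $\Lambda^\epsilon_j$, $\Theta^\epsilon_j$ up to harmless smooth factors.
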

\begin{proof}
The proof will be done locally, i.e. for a given patch $V_{\delta,U_j} \subset V_\delta$, which can then be extended to the whole region near the boundary by using the partition of unity for $U_1, \ldots, U_m$. We start by rewriting equation \eqref{boundaryeqcoords} as follows
\begingroup
\allowdisplaybreaks
\begin{align*}
& - \frac{\partial^2 P^\epsilon_{b,j}}{\partial s^2} = \Delta_\tau P^\epsilon_{b,j} + \frac{1}{b} \frac{\partial b}{\partial s} \frac{\partial P^\epsilon_{b,j}}{\partial s} - \Delta (\phi_b \rho_j) P^\epsilon - 2 \nabla_\tau (\phi_b \rho_j) \cdot \nabla_\tau P^\epsilon - 2 \partial_s (\phi_b \rho_j) \frac{\partial P^\epsilon}{\partial s}  \\
&+ \phi_b \rho_j \bigg( \frac{1}{b} \bigg[ \sum_{i,j=1}^2 \frac{\partial^2}{\partial \sigma_i \partial \sigma_j} \big( b u^\epsilon_i u^\epsilon_j \big) + 2 \sum_{i=1}^2 \frac{\partial^2}{\partial \sigma_i \partial s} \big( b u^\epsilon_i (u^\epsilon \cdot n) \big) + \frac{\partial^2 b}{\partial s^2}  (u^\epsilon \cdot n)^2 + \frac{\partial b}{\partial s} \frac{\partial ((u^\epsilon \cdot n)^2)}{\partial s}  \bigg]  \\
&- \Delta_\tau \big( (u^\epsilon \cdot n)^2 \big)  \bigg).
\end{align*}
\endgroup
By integrating the above over the interval $[s, \delta]$ and integrating by parts, we find that (using that $\rho_j$ has compact support)
\begingroup
\allowdisplaybreaks
\begin{align*}
&\partial_s P^\epsilon_{b,j} = -\frac{1}{b} \frac{\partial b}{\partial s} P^\epsilon_{b,j} + 2 \partial_s (\phi_b \rho_j) P^\epsilon - \frac{2 \phi_b \rho_j}{b} \sum_{i=1}^2 \frac{\partial}{\partial \sigma_i} \big( b u^\epsilon_i (u^\epsilon \cdot n) \big) - \frac{\phi_b \rho_j}{b} \partial_s b (u^\epsilon \cdot n)^2  \\
&+\int_s^\delta \bigg[ \Delta_\tau P^\epsilon_{b,j} - \Delta (\phi_b \rho_j) P^\epsilon - 2 \nabla_\tau (\phi_b \rho_j) \cdot \nabla_\tau P^\epsilon + \phi_b \rho_j \bigg( \frac{1}{b} \bigg[ \sum_{i,j=1}^2 \frac{\partial^2}{\partial \sigma_i \partial \sigma_j} \big( b u^\epsilon_i u^\epsilon_j \big)  + \frac{\partial^2 b}{\partial s^2}  (u^\epsilon \cdot n)^2  \bigg]  \\
&- \Delta_\tau \big( (u^\epsilon \cdot n)^2 \big)  \bigg) - \frac{\partial}{\partial s} \bigg( \frac{1}{b} \frac{\partial b}{\partial s} \bigg) P^\epsilon_{b,j} + 2 \partial_s^2 (\phi_b \rho_j) P^\epsilon - 2 \frac{\partial}{\partial s} \bigg( \frac{2 \phi_b \rho_j}{b} \bigg) \sum_{i=1}^2 \frac{\partial}{\partial \sigma_i} \big( b u^\epsilon_i (u^\epsilon \cdot n) \big) \\
&- \frac{\partial}{\partial s} \bigg( \frac{\phi_b \rho_j}{b} \partial_s b \bigg) (u^\epsilon \cdot n)^2 \bigg] ds'. 
\end{align*}
\endgroup
Now we introduce the notation
\begin{align*}
\Lambda^\epsilon &\coloneqq -\frac{1}{b} \frac{\partial b}{\partial s} P^\epsilon_{b,j} + 2 \partial_s (\phi_b \rho_j) P^\epsilon - \frac{2 \phi_b \rho_j}{b} \sum_{i=1}^2 \frac{\partial}{\partial \sigma_i} \big( b u^\epsilon_i (u^\epsilon \cdot n) \big) - \frac{\phi_b \rho_j}{b} \partial_s b (u^\epsilon \cdot n)^2, \\
\Theta^\epsilon &\coloneqq \Delta_\tau P^\epsilon_{b,j} - \Delta (\phi_b \rho_j) P^\epsilon - 2 \nabla_\tau (\phi_b \rho_j) \cdot \nabla_\tau P^\epsilon + \phi_b \rho_j \bigg( \frac{1}{b} \bigg[ \sum_{i,j=1}^2 \frac{\partial^2}{\partial \sigma_i \partial \sigma_j} \big( b u^\epsilon_i u^\epsilon_j \big)  + \frac{\partial^2 b}{\partial s^2}  (u^\epsilon \cdot n)^2  \bigg]  \\
&- \Delta_\tau \big( (u^\epsilon \cdot n)^2 \big)  \bigg) - \frac{\partial}{\partial s} \bigg( \frac{1}{b} \frac{\partial b}{\partial s} \bigg) P^\epsilon_{b,j} + 2 \partial_s^2 (\phi_b \rho_j) P^\epsilon - 2 \frac{\partial}{\partial s} \bigg( \frac{2 \phi_b \rho_j}{b} \bigg) \sum_{i=1}^2 \frac{\partial}{\partial \sigma_i} \big( b u^\epsilon_i (u^\epsilon \cdot n) \big) \\
&- \frac{\partial}{\partial s} \bigg( \frac{\phi_b \rho_j}{b} \partial_s b \bigg) (u^\epsilon \cdot n)^2.
\end{align*}
This yields equation \eqref{traceequation}.

Now we multiply the equation we derived above for $\partial_s P^\epsilon_{b,j}$ by a test function $\phi ( \sigma_1,\sigma_2) \in H^2 (U_j)$. We then integrate with respect to $\sigma_1$ and $\sigma_2$ once or twice, dependent on the term (such that the terms $P^\epsilon$ and $u^\epsilon \otimes u^\epsilon$ no longer have any derivatives). From this we obtain estimates \eqref{traceestimate1} and \eqref{traceestimate2} for $\Lambda^\epsilon$ and $\Theta^\epsilon$. Note that we implicitly use the (boundary) regularity of $P^\epsilon_{b,j}$ here, which we will establish in Proposition \ref{boundaryestimates}.

One can see that $\Lambda^\epsilon_j$ only contains first-order derivatives of $u^\epsilon$ with respect to $\sigma_1$ and $\sigma_2$, which means that $\Lambda^\epsilon (\cdot, \cdot, s) \in H^{-1} (U_j)$. The terms in $\Theta^\epsilon$ have at most second-order derivatives with respect to $\sigma_1$ and $\sigma_2$, so $\Theta^\epsilon$ lies in $H^{-2} (U_j)$.
\end{proof}
\section{Estimate of the boundary layer pressure} \label{decompositionsection}
We will now establish an estimate for the boundary layer pressure, analogous to the interior regularity estimate \eqref{interiorestimateeq}. As was calculated before, the local boundary layer pressure satisfies the problem
\begingroup
\allowdisplaybreaks
\begin{align*}
&-\Delta P^\epsilon_{b,j}  = - \Delta (\phi_b \rho_j) P^\epsilon - 2 \nabla_\tau (\phi_b \rho_j) \cdot \nabla_\tau P^\epsilon - 2 \partial_s (\phi_b \rho_j) \frac{\partial P^\epsilon}{\partial s} \nonumber \\
&+ \phi_b \rho_j \bigg( \frac{1}{b} \bigg[ \sum_{i,j=1}^2 \frac{\partial^2}{\partial \sigma_i \partial \sigma_j} \big( b u^\epsilon_i u^\epsilon_j \big) + 2 \sum_{i=1}^2 \frac{\partial^2}{\partial \sigma_i \partial s} \big( b u^\epsilon_i (u^\epsilon \cdot n) \big) + \frac{\partial^2}{\partial s^2} \big( b (u^\epsilon \cdot n)^2 \big) \bigg] \nonumber \\
&- \Delta_\tau \big( (u^\epsilon \cdot n)^2 \big) - \frac{1}{b} \frac{\partial b}{\partial s} \frac{\partial ((u^\epsilon \cdot n)^2)}{\partial s} - \frac{\partial^2 ((u^\epsilon \cdot n)^2)}{\partial s^2} \bigg) \text{ in } V_{\delta, U_j}, \\
&\partial_n P^{\epsilon}_{b,j} = \rho_j \big( u^\epsilon \otimes u^\epsilon : \nabla n \big), \text{ on } \partial \Omega \cap U_j, \; P^{\epsilon}_{b,j} = 0 \text{ on } \partial V_{\delta, U_j} \backslash \partial \Omega.
\end{align*}
\endgroup

The local boundary Schauder-type estimate for $P^{\epsilon}_{b,j}$ is given in the next proposition (for each region $V_{\delta,U_j}$), the local estimates can then be patched together to yield a global estimate for $P^{\epsilon}_{b}$ on $V_\delta$.
\begin{proposition} \label{boundaryestimates}
The boundary layer pressure satisfies the following local estimate
\begin{equation*}
\lVert P^{\epsilon}_{b,j} \rVert_{C^{0, \alpha} (V_{\delta,U_j}) } \leq C \lVert u^\epsilon \otimes u^\epsilon \rVert_{C^{0, \alpha} (V_{\delta,U_j})} + D \lVert P^{\epsilon}_{b,j} \rVert_{L^\infty (V_{\delta,U_j})},
\end{equation*}
where the nonnegative constants $C$ and $D$ are independent of $\epsilon$.
\end{proposition}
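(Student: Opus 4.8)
The plan is to treat the boundary-value problem for $P^\epsilon_{b,j}$ as a mixed Dirichlet--Neumann problem on the "curved box" $V_{\delta,U_j}$ and apply a Schauder-type estimate in the local coordinates $(\sigma_1,\sigma_2,s)$ of Section \ref{parametrisationsection}. First I would pass to the straightened domain via the $C^2$ diffeomorphism $\phi_{x_0}$, under which $V_{\delta,U_j}$ becomes (a subset of) the flat box $[0,\delta]^3$, the portion of the boundary lying in $\partial\Omega$ becomes $\{s=0\}$, and the remaining boundary $\partial V_{\delta,U_j}\setminus\partial\Omega$ is where the homogeneous Dirichlet condition $P^\epsilon_{b,j}=0$ is imposed (this holds because $\rho_j$ and $\phi_b$ vanish there). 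In these coordinates the Laplacian becomes the operator in \eqref{laplaciancoord}, a uniformly elliptic second-order operator with $C^{0,\alpha}$ (indeed $C^1$) coefficients $a_{ij}$ and $\tfrac1b\partial_s b$, since $\partial\Omega\in C^3$ forces $n\in C^2$ and hence $J\in C^1$, $b\in C^1$, $a_{ij}\in C^1$; ellipticity is uniform on the compact box because $J$ is invertible with determinant bounded away from zero.

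The key step is then to invoke the interior-and-boundary Schauder estimates established in Appendices \ref{schauderappendix1}--\ref{schauderappendix2} (cf. \cite{gilbarg,ladyzhenskaya,giaquinta,krylov}) for this operator. For the right-hand side $f$ of the equation for $P^\epsilon_{b,j}$, I would split it into the terms that are already of the form "$C^{0,\alpha}$ function" (namely $-(\Delta\phi_b\rho_j)P^\epsilon$, the first-order terms $-2\nabla_\tau(\phi_b\rho_j)\cdot\nabla_\tau P^\epsilon$ and $-2\partial_s(\phi_b\rho_j)\partial_s P^\epsilon$, and $-\Delta_\tau((u^\epsilon\cdot n)^2)$ together with the lower-order $s$-derivative term) and the genuinely second-derivative terms coming from $(\nabla\otimes\nabla):(u^\epsilon\otimes u^\epsilon)$ and $\partial_s^2(b(u^\epsilon\cdot n)^2)$. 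The crucial observation — already emphasised after \eqref{boundaryeqcoords} — is that, because $(u^\epsilon\cdot n)^2$ has been incorporated into $P^\epsilon$, the pure $\partial_s^2$ contributions cancel, so that after the manipulations in the proof of Lemma \ref{tracelemma} the right-hand side can be written as $\Div$ of a $C^{0,\alpha}$ vector field plus a $C^{0,\alpha}$ function, all controlled by $\lVert u^\epsilon\otimes u^\epsilon\rVert_{C^{0,\alpha}}$ and $\lVert P^\epsilon\rVert_{L^\infty}$. Feeding $f$ in this "divergence form $+$ Hölder" shape into the Schauder estimate of the appendices yields
$$
\lVert P^\epsilon_{b,j}\rVert_{C^{0,\alpha}(V_{\delta,U_j})}\le C\Big(\lVert u^\epsilon\otimes u^\epsilon\rVert_{C^{0,\alpha}(V_{\delta,U_j})}+\lVert P^\epsilon\rVert_{L^\infty(V_{\delta,U_j})}+\lVert g\rVert_{C^{0,\alpha}(\partial\Omega\cap U_j)}\Big),
$$
where $g=\rho_j(u^\epsilon\otimes u^\epsilon:\nabla n)$ is the Neumann datum; since $\nabla n\in C^1$ this boundary term is also dominated by $\lVert u^\epsilon\otimes u^\epsilon\rVert_{C^{0,\alpha}}$, and $\lVert P^\epsilon\rVert_{L^\infty}$ can be replaced by $\lVert P^\epsilon_{b,j}\rVert_{L^\infty}$ on the support of $\rho_j\phi_b$. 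All constants are independent of $\epsilon$ because they depend only on $\Omega$, $\alpha$, and the fixed cutoffs, not on $u^\epsilon$.

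The main obstacle I anticipate is the careful bookkeeping at the Neumann part of the boundary: the coordinate system is \emph{not} orthogonal away from $\{s=0\}$, so although $\partial_s$ is the conormal direction at the boundary itself, one must verify that the conormal derivative associated with the operator \eqref{laplaciancoord} coincides (up to $C^1$ lower-order factors) with $\partial_n$ at $s=0$, using the simplifications \eqref{metricsimplification} that hold on $\partial\Omega$. A secondary technical point is ensuring the Schauder estimate applies up to the \emph{edge} where the Dirichlet face $\partial V_{\delta,U_j}\setminus\partial\Omega$ meets the Neumann face $\partial\Omega\cap U_j$; this is handled by the cutoff $\rho_j\phi_b$, which vanishes identically near that edge, so that $P^\epsilon_{b,j}$ is supported away from the corner and the estimate reduces to separate (pure Dirichlet, resp. pure Neumann) Schauder estimates on overlapping subdomains. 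Once these points are addressed, summing over $j=1,\dots,m$ with the partition of unity $\rho_1,\dots,\rho_m$ gives the corresponding global bound for $P^\epsilon_b$ on $V_\delta$.
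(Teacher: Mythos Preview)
Your plan is correct and aligns with the paper's strategy: both reduce to the Schauder-type estimates of Appendices \ref{schauderappendix1}--\ref{schauderappendix2} after flattening the Neumann portion of the boundary. The paper's own proof of this proposition is in fact just a two-line pointer to Theorems \ref{existencedirichletneumann} and \ref{schauderdirichletneumann}.

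The one methodological difference worth noting is how the Neumann condition is handled. You propose to work directly with the mixed Dirichlet--Neumann problem in the straightened box, using the cutoff $\rho_j\phi_b$ to stay away from the Dirichlet/Neumann edge and applying separate pure-Dirichlet and pure-Neumann Schauder estimates on overlapping subregions. The paper instead, in the proof of Theorem \ref{schauderdirichletneumann}, first homogenises the Neumann datum by subtracting an auxiliary function $G$ with $\partial_{x_n}G=g$ on $\{x_n=0\}$, then takes an \emph{even extension} of the solution through the flat Neumann face to convert the problem into a pure Dirichlet problem on the reflected domain, and finally runs the \emph{continuity method} interpolating between $\Delta$ and the variable-coefficient operator to transfer the Green's-function estimate of Theorem \ref{dirichletschauderestimate}. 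Your approach is arguably more transparent about the corner issue (which the paper's reflection argument sidesteps implicitly), while the paper's even-extension trick avoids ever needing a genuine Neumann Schauder estimate. Both routes are standard and give the same bound with $\epsilon$-independent constants.

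One small correction: your claim that $\lVert P^\epsilon\rVert_{L^\infty}$ ``can be replaced by $\lVert P^\epsilon_{b,j}\rVert_{L^\infty}$ on the support of $\rho_j\phi_b$'' is not quite right, since the source terms $(\Delta\phi_b\rho_j)P^\epsilon$ and $(\nabla\phi_b\rho_j)\cdot\nabla P^\epsilon$ live precisely where $\rho_j\phi_b$ is transitioning, and there $P^\epsilon\neq P^\epsilon_{b,j}$. The estimate naturally comes with $\lVert P^\epsilon\rVert_{L^\infty}$ on the right, which is how it is used in \eqref{collectedestimates}; the appearance of $\lVert P^\epsilon_{b,j}\rVert_{L^\infty}$ in the displayed statement is harmless only because $\lVert P^\epsilon_{b,j}\rVert_{L^\infty}\le\lVert P^\epsilon\rVert_{L^\infty}$ trivially.
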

\begin{proof}
We first observe that there indeed exists a unique solution $P^{\epsilon}_{b,j}$ because of Theorem \ref{existencedirichletneumann} (see Appendix \ref{schauderappendix2}). The regularity estimate is derived in the proof of Theorem \ref{schauderdirichletneumann} (see Appendix \ref{schauderappendix2}).
\end{proof}

\section{Taking the limit $\epsilon \rightarrow 0$} \label{limitsection}
Now we see that by collecting the estimates from Propositions \ref{interiorestimate} and \ref{boundaryestimates} we find that
\begin{equation} \label{collectedestimates}
\lVert P^\epsilon \rVert_{C^{0, \alpha } (\Omega) } \leq \lVert P^\epsilon_i \rVert_{C^{0, \alpha } (\Omega) } + \lVert P^\epsilon_{b} \rVert_{C^{0, \alpha } (\Omega) } \leq C \lVert u^\epsilon \otimes u^\epsilon \rVert_{C^{0, \alpha} (\Omega) } + D \lVert P^\epsilon \rVert_{L^\infty (\Omega)}.
\end{equation}
We will now show that the inequality still holds without the term $D \lVert P^\epsilon \rVert_{L^\infty (\Omega)}$, up to an enlarging of the constant $C$.
\begin{proposition} \label{pressurebound}
The following estimate holds for $P^\epsilon$
\begin{equation} \label{mollificationlimit}
\lVert P^\epsilon \rVert_{L^\infty (\Omega)} \leq C \lVert u^\epsilon \otimes u^\epsilon \rVert_{C^{0, \alpha} (\Omega)}. 
\end{equation}
Once again, the constant $C$ does not depend on $\epsilon$.
\end{proposition}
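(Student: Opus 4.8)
The plan is to upgrade the estimate \eqref{collectedestimates} by a contradiction/compactness argument, which is the standard device for absorbing a lower-order term of the form $D\lVert P^\epsilon\rVert_{L^\infty}$ into the main term once uniqueness is available. First I would recall that $P^\epsilon$ is normalised: since $\int_\Omega p^\epsilon\,dx=0$, we have $\int_\Omega P^\epsilon\,dx=\int_\Omega\phi(u^\epsilon\cdot n)^2\,dx$, which is itself controlled by $\lVert u^\epsilon\otimes u^\epsilon\rVert_{L^\infty}\le \lVert u^\epsilon\otimes u^\epsilon\rVert_{C^{0,\alpha}}$. Thus it suffices to bound the oscillation of $P^\epsilon$, i.e. $\lVert P^\epsilon-\fint_\Omega P^\epsilon\rVert_{L^\infty}$, by $C\lVert u^\epsilon\otimes u^\epsilon\rVert_{C^{0,\alpha}}$.

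Suppose \eqref{mollificationlimit} fails. Then there are sequences $\epsilon_k\to 0$ (or just a sequence of admissible data; the argument is really about the map $u^\epsilon\otimes u^\epsilon\mapsto P^\epsilon$) and corresponding solutions with
\[
\lVert P^{\epsilon_k}-\fint_\Omega P^{\epsilon_k}\rVert_{L^\infty(\Omega)}=1,\qquad \lVert u^{\epsilon_k}\otimes u^{\epsilon_k}\rVert_{C^{0,\alpha}(\Omega)}\to 0.
\]
Set $Q_k:=P^{\epsilon_k}-\fint_\Omega P^{\epsilon_k}$, so $\fint_\Omega Q_k=0$ and $\lVert Q_k\rVert_{L^\infty}=1$. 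Feeding this normalisation back into \eqref{collectedestimates} (applied to $Q_k$, which solves the same equation and boundary condition as $P^{\epsilon_k}$ since the added constant drops out under $\Delta$ and $\partial_n$) gives $\lVert Q_k\rVert_{C^{0,\alpha}(\Omega)}\le C\lVert u^{\epsilon_k}\otimes u^{\epsilon_k}\rVert_{C^{0,\alpha}}+D\le C'$ uniformly in $k$. By the compact embedding $C^{0,\alpha}(\overline\Omega)\hookrightarrow C^{0,\beta}(\overline\Omega)$ for $\beta<\alpha$ (Arzelà–Ascoli), a subsequence of $Q_k$ converges in $C^{0,\beta}(\overline\Omega)$ to some $Q_\infty$ with $\lVert Q_\infty\rVert_{L^\infty}=1$ and $\fint_\Omega Q_\infty=0$. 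Passing to the limit in the weak formulation of part (3) of Theorem \ref{pressureregularity} (the right-hand side $(\nabla\otimes\nabla):(u^{\epsilon_k}\otimes u^{\epsilon_k})$ and the corrector term $\Delta(\phi(u^{\epsilon_k}\cdot n)^2)$ both tend to $0$ in $\mathcal D'(\Omega)$ since $u^{\epsilon_k}\otimes u^{\epsilon_k}\to 0$ uniformly, and likewise $(u^{\epsilon_k}\cdot n)^2\to 0$), we find that $Q_\infty$ is a weak solution of $-\Delta Q_\infty=0$ in $\Omega$ with the very weak Neumann boundary condition $\partial_n Q_\infty=0$ in $H^{-2}(\partial\Omega)$ — here the trace structure \eqref{traceequation} from Lemma \ref{tracelemma}, which is stable under the limit because $\Lambda^{\epsilon_k},\Theta^{\epsilon_k}\to 0$ in the relevant spaces, is what makes the boundary condition survive the passage to the limit. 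By the uniqueness statement in Theorem \ref{pressureregularity} (equivalently, a harmonic function with zero very weak normal derivative and zero mean is identically zero), $Q_\infty\equiv 0$, contradicting $\lVert Q_\infty\rVert_{L^\infty}=1$. Hence \eqref{mollificationlimit} holds.

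The main obstacle is making the passage to the limit in the boundary condition rigorous: one must verify that the very weak formulation \eqref{veryweakboundarycondition}, realised through the trace identity \eqref{traceequation}, is genuinely closed under $C^{0,\beta}$-convergence of $Q_k$ together with the vanishing of the source terms, so that $Q_\infty$ inherits a \emph{bona fide} homogeneous boundary condition in $H^{-2}(\partial\Omega)$ to which the uniqueness part of Theorem \ref{pressureregularity} applies. A secondary point requiring care is that the constant $D$ in \eqref{collectedestimates} is independent of $\epsilon$ (guaranteed by Propositions \ref{interiorestimate} and \ref{boundaryestimates}), which is what allows the uniform $C^{0,\alpha}$ bound on $Q_k$; and one should note the argument only uses the qualitative fact that the limiting problem has a unique (zero-mean) solution, not any quantitative estimate, so there is no circularity.
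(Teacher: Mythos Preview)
Your argument is correct and follows essentially the same contradiction/compactness route as the paper: normalize, extract a convergent subsequence via Arzel\`a--Ascoli from the uniform $C^{0,\alpha}$ bound, pass to the limit in the PDE and---via the trace lemma---in the boundary condition, and then invoke uniqueness for the homogeneous Neumann problem with zero mean. One small correction: after rescaling, $\Lambda^{\epsilon_k}$ and $\Theta^{\epsilon_k}$ do \emph{not} tend to zero (they contain terms in $P^\epsilon$ and are merely bounded); rather, the trace identity \eqref{traceequation} passes to the limit by compactness in $C([0,\delta];H^{-2})$, and $\partial_n Q_\infty\lvert_{\partial\Omega}=0$ follows because the prescribed Neumann data $\rho_j\,(u^{\epsilon_k}\otimes u^{\epsilon_k}):\nabla n$, rescaled, tends to zero.
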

\begin{proof}
We argue by contradiction. If the inequality does not hold, there exists a subsequence (which we still call $\{ P^\epsilon \}$) such that
\begin{equation} \label{falseassumption}
\lim_{\epsilon \rightarrow 0} \frac{\lVert u^\epsilon \otimes u^\epsilon \rVert_{C^{0, \alpha} (\Omega)}}{\lVert P^\epsilon \rVert_{L^\infty (\Omega)}} = 0.
\end{equation}
Now we introduce the following functions
\begin{equation*}
\mathcal{G}^\epsilon \coloneqq \frac{P^\epsilon}{\lVert P^\epsilon \rVert_{L^\infty (\Omega)}}.
\end{equation*}
These functions solve the following boundary-value problem
\begingroup
\allowdisplaybreaks
\begin{align}
- \Delta \mathcal{G}^\epsilon &= \frac{1}{\lVert P^\epsilon \rVert_{L^\infty  (\Omega)} } \bigg[ (\nabla \otimes \nabla ) : (u^\epsilon \otimes u^\epsilon) - \Delta ( \phi (u^\epsilon \cdot n)^2 ) \bigg] \quad \text{in } \Omega, \label{HBVP1} \\
&\partial_n \mathcal{G}^\epsilon = \frac{1}{\lVert P^\epsilon \rVert_{L^\infty (\Omega)}} \big( u^\epsilon \otimes u^\epsilon : \nabla n \big) \quad \text{on } \partial \Omega, \label{HBVP2} \\
&\int_\Omega \mathcal{G}^\epsilon (x) dx = \frac{1}{\lVert P^\epsilon \rVert_{L^\infty (\Omega)}} \int_\Omega \phi (x) (u^\epsilon \cdot n)^2 dx. \label{HBVP3}
\end{align}
\endgroup
The sequence $\lVert \mathcal{G}^\epsilon \rVert_{C^{0, \alpha} (\Omega)}$ is bounded by inequality \eqref{collectedestimates}. By using the Arzelà-Ascoli theorem we know that there exists a subsequence, for which we also write $\mathcal{G}^\epsilon$, converging strongly to a given function $\mathcal{G}$ in $C^0 (\overline{\Omega})$. Note that it also converges in any H\"older space with exponent less than $\alpha$, which can be seen by using an interpolation inequality.

By assumption we know that $\lVert \mathcal{G}^\epsilon \rVert_{L^\infty (\Omega)} = \lVert \mathcal{G} \rVert_{L^\infty (\Omega) } = 1$. It follows by equation \eqref{falseassumption} that the right-hand sides of equations \eqref{HBVP1}, \eqref{HBVP2} and \eqref{HBVP3} of the boundary-value problem for $\mathcal{G}^\epsilon$ all go to zero as $\epsilon \rightarrow 0$ in the space $C^{0, \beta} (\Omega)$ for $\beta \in [0, \alpha)$. This means that $\mathcal{G}$ satisfies the equation
\begin{equation*}
- \Delta \mathcal{G} = 0 \quad \text{in } \Omega.
\end{equation*}

Next we show that $\partial_n \mathcal{G}$ is well-defined. We know that the sequence $\{ \partial_n \mathcal{G}^\epsilon \lvert_{\partial \Omega} \}$ converges to zero, but we need to show that the limit coincides with $\partial_n \mathcal{G} \lvert_{\partial \Omega}$. Using the trace lemma (Lemma \ref{tracelemma}), we know that near the boundary
\begin{equation} \label{mollifiedtrace}
\partial_{s} \mathcal{G}^\epsilon_{b,j} (\cdot, \cdot, s) = \frac{\Lambda^\epsilon_j ( \cdot, \cdot, s)}{\lVert P^\epsilon \rVert_{L^\infty}} + \int_{s}^\delta \frac{\Theta^\epsilon_j (\cdot, \cdot, s')}{\lVert P^\epsilon \rVert_{L^\infty}} d s'.
\end{equation}
We first observe that the map $s \mapsto \partial_{s} \mathcal{G}^\epsilon_{b,j} (\cdot, \cdot, s)$ is a map from $[0,\delta)$ to $H^{-2} (U_j)$. By using estimates \eqref{traceestimate1} and \eqref{traceestimate2}, we find that
\begin{align*}
\bigg\lVert \frac{\Lambda^\epsilon}{\lVert P^\epsilon \rVert_{L^\infty}} \bigg\rVert_{C^{0,\alpha} ([0, \delta); H^{-1} (U_j))} &\leq C_b \bigg\lVert \frac{u^\epsilon \otimes u^\epsilon }{\lVert P^\epsilon \rVert_{L^\infty}} \bigg\rVert_{C^{0,\alpha} (\Omega)} + D_b, \\
\bigg\lVert \frac{\Theta^\epsilon}{\lVert P^\epsilon \rVert_{L^\infty}} \bigg\rVert_{C^{0,\alpha} ([0, \delta); H^{-2} (U_j))} &\leq C_b \bigg\lVert \frac{u^\epsilon \otimes u^\epsilon }{\lVert P^\epsilon \rVert_{L^\infty}} \bigg\rVert_{C^{0,\alpha} (\Omega)} + D_b,
\end{align*}
which means that the sequence $\{ \partial_s \mathcal{G}^\epsilon_{b,j} \}$ is equicontinuous in $s$. We next show that for every $s \in [0,\delta]$, the sequence $\{ \partial_s \mathcal{G}^\epsilon (\cdot, \cdot, s) \}$ has a convergent subsequence. We have that the sequence $\bigg\{ \frac{\Lambda^\epsilon (\cdot, \cdot, s)}{\lVert P^\epsilon \rVert_{L^\infty}} \bigg\}$ is bounded uniformly in $\epsilon$ in $H^{-1} (U_j)$ for fixed $s$. Due to the compact embedding of $H^{-1} (U_j)$ into $H^{-2} (U_j)$ we have a strongly convergent subsequence in $H^{-2} (U_j)$. 

Now by examining the expression for $\partial_s P^\epsilon_{b,j}$ in the proof of Lemma \ref{tracelemma}, it can be seen that estimate \eqref{traceestimate2} can be improved to 
\begin{equation*}
\bigg\lVert \frac{\Theta^\epsilon}{\lVert P^\epsilon \rVert_{L^\infty}} \bigg\rVert_{C^{0,\alpha} ([0, \delta); H^{-2+\alpha} (U_j)]} \leq C_b \bigg\lVert \frac{u^\epsilon \otimes u^\epsilon }{\lVert P^\epsilon \rVert_{L^\infty}} \bigg\rVert_{C^{0,\alpha} (\Omega)} + D_b,
\end{equation*}
where we have used Proposition \ref{boundaryestimates} in bounding the term $\Delta_\tau P^\epsilon_{b,j}$ in the $H^{-2+\alpha} (U_j)$ norm (with respect to the variables $\sigma_1$ and $\sigma_2$).

This allows us to conclude that the sequence $\bigg\{ \frac{\Theta^\epsilon (\cdot, \cdot, s)}{\lVert P^\epsilon \rVert_{L^\infty}} \bigg\}$ has a convergent subsequence in $H^{-2} (U_j)$ for fixed $s \in [0, \delta]$. Therefore the same holds for the sequence $\bigg\{ \partial_s \mathcal{G}^\epsilon (\cdot, \cdot, s) \bigg\}$. By the Arzela-Ascoli theorem we therefore conclude that the sequence $\{ \partial_s \mathcal{G}^\epsilon \}$ has a convergent subsequence in $C^0 ([0,\delta]; H^{-2} (U_j) )$, we will refer to the limit as $\mathcal{H} \in C^{0,\alpha} ([0,\delta]; H^{-2} (U_j) )$. It is clear that $\mathcal{H}$ coincides with $\partial_s \mathcal{G}$, as one can integrate equation \eqref{mollifiedtrace} with respect to $s$ and take the limit $\epsilon \rightarrow 0$.
We conclude that (as $\partial_s \mathcal{G}$ is continuous in $s$)
\begin{equation}
\partial_s \mathcal{G}(\cdot, \cdot, 0) = 0 \quad \text{ in } H^{-2} ( \partial \Omega).
\end{equation}
Hence $\mathcal{G}$ satisfies the following boundary-value problem
\begin{align}
&- \Delta \mathcal{G} = 0, \quad \text{ in } \Omega \\
&\partial_n \mathcal{G} \lvert_{\partial \Omega} = 0, \quad \int_\Omega \mathcal{G}(x) dx = 0. 
\end{align}
The only solution to this boundary-value problem is $\mathcal{G} = 0$, this is in contradiction with the assumption $\lVert \mathcal{G} \rVert_{L^\infty (\Omega)} = 1$. Therefore inequality \eqref{mollificationlimit} must hold.
\end{proof}
\begin{remark}
We are providing the full details of the proof of Proposition \ref{pressurebound}, especially regarding the convergence of $\{ \partial_s \mathcal{G}^\epsilon \}$, as this nontrivial point was missing in the proof of Proposition 3.11 in \cite{bardos2021}. In particular, equation (3.41) as given in \cite{bardos2021} is not correct. The statement itself of Proposition 3.11 in \cite{bardos2021} is correct and the given proof can be adapted by using the method outlined above. 
\end{remark}
Finally we are able to prove Theorem \ref{pressureregularity}.
\begin{proof}[Proof of Theorem \ref{pressureregularity}] 
By Lemma \ref{mollificationlemma} we know that
\begin{equation*}
\lVert u^\epsilon \otimes u^\epsilon \rVert_{C^{0, \alpha} (\Omega)} \leq \lVert u \otimes u \rVert_{C^{0, \alpha} (\Omega)}.
\end{equation*}
Moreover, we know that $u^\epsilon \otimes u^\epsilon$ converges to $u \otimes u$ in the $C^{0, \beta} (\Omega)$ norm for any $\beta \in [0, \alpha)$. This follows from standard mollification estimates and interpolation inequalities.

Then by combining inequalities \eqref{collectedestimates} and \eqref{mollificationlimit}, we find that $\lVert P^\epsilon \rVert_{C^{0, \alpha} (\Omega)}$ is bounded. This means that we are able to take a subsequence, which we also denote by $P^\epsilon$, which converges in the $C^0 (\overline{\Omega})$ norm to the limit $P \in C^{0, \alpha} (\Omega)$. It is clear that one can pass to the limit in the weak formulation from Definition \ref{weakformulationdef}.

By using Lemma \ref{tracelemma}, we find that
\begin{align}
P(\sigma_1, \sigma_2, s) - P(\sigma_1, \sigma_2, 0) &= \lim_{\epsilon \rightarrow 0} (P^{\epsilon} (\sigma_1, \sigma_2, s) - P^\epsilon(\sigma_1,\sigma_2,0) ) \nonumber \\
&= \int_0^{s} \sum_{j=1}^m \lim_{\epsilon \rightarrow 0} \bigg( \Lambda^\epsilon_j ( \sigma_1, \sigma_2, s') + \int_{s'}^\delta \Theta^\epsilon_j (\sigma_1, \sigma_2, s'') ds'' \bigg) ds'. \label{H-2regularity}
\end{align}
By estimates \eqref{traceestimate1} and \eqref{traceestimate2}, we know that the limits $\lim_{\epsilon \rightarrow 0} \Lambda^\epsilon$ and $\lim_{\epsilon \rightarrow 0} \Theta^\epsilon$ exist as elements of $C^{0,\alpha} ([0,\delta] ; H^{-2} (\partial \Omega))$. This implies that $P \in C^{1,\alpha} ([0, \delta]; H^{-2} ( \partial \Omega) ) $ near the boundary. 
Because it holds that $\partial_{s} P^{\epsilon}_{b,j} = \rho_j \big( u^\epsilon \otimes u^\epsilon : \nabla n \big) $ on $U_j \cap \partial \Omega$, then by the established convergence we conclude
\begin{equation*}
\partial_{s} P (\sigma_1, \sigma_2, 0) = u \otimes u : \nabla n ,
\end{equation*}
which holds in $H^{-2} (U_j)$ (locally for every patch $U_j$ and then extended globally by using a partition of unity). This allows us to conclude that $P \in C^{0, \alpha} (\Omega)$ solves the boundary-value problem stated in Theorem \ref{pressureregularity} and also $\partial_n P \in C^{0,\alpha} ([0, \delta]; H^{-2} ( \partial \Omega) )$. We observe that the trace lemma (Lemma \ref{tracelemma}) has been crucial in proving that the limiting solution $P$ attains the very weak boundary condition.

Now we prove the uniqueness of the modified pressure. Suppose there exist two mean-free pressures $P_1, P_2 \in C^{0,\alpha} (\Omega)$ both solving the weak formulation from Definition \ref{weakformulationdef}. The difference $\delta P \coloneqq P_1 - P_2$ then satisfies (for $\Phi \in \mathcal{D} (\overline{\Omega})$)
\begin{equation*}
\int_\Omega \delta P \Delta \Phi dx - \int_{\partial \Omega} \delta P \partial_n \Phi dx = 0.
\end{equation*}
Now we proceed similarly to \cite{quittner}. For any $\widetilde{\zeta} \in C^\infty (\partial \Omega)$, one can take $\Phi$ to be a solution to the following Neumann boundary-value problem
\begin{equation*}
\begin{cases}
\Delta \Phi = \frac{1}{\lvert \Omega \rvert} \int_{\partial \Omega} \widetilde{\zeta} dx \quad \text{in } \Omega, \\
\partial_n \Phi = \widetilde{\zeta} \quad \text{on } \partial \Omega.
\end{cases}
\end{equation*}
Inserting this choice of test function into the weak formulation leads to (using the fact that $\delta P$ has zero average)
\begin{equation*}
\int_{\partial \Omega} \delta P \widetilde{\zeta} dx = 0.
\end{equation*}
As this holds for any $\widetilde{\zeta} \in C^\infty (\partial \Omega)$, we conclude that $\delta P \equiv 0$ on $\partial \Omega$. Next we choose $\Phi$ to be the Newtonian potential of any function $\Pi \in \mathcal{D} (\widetilde{\Omega})$ for some $\widetilde{\Omega} \subset \subset \Omega$, this gives
\begin{equation*}
\int_\Omega \delta P \Pi dx = 0.
\end{equation*}
As this holds for any $\Pi \in \mathcal{D} (\widetilde{\Omega})$ with support away from the boundary $\partial \Omega$, we conclude that $\delta P \equiv 0$ in $\Omega$ and hence $P_1 = P_2$ and that the solution to problem \eqref{neumannpressureproblem} is unique. Notice that in order to prove the uniqueness of the modified pressure we did not require regularity assumptions on $\partial_n P$. Therefore the modified pressure $P$ with $\partial_n P \in C^{0,\alpha} ([0, \delta]; H^{-2} ( \partial \Omega) )$, whose existence was proven in this theorem, is unique among all mean-free pressures in $C^{0,\alpha} (\Omega)$ solving the weak formulation from Definition \ref{weakformulationdef}.
\end{proof}

\section{Interior double Hölder exponent regularity of the pressure} \label{doubleregularitysection}
In this section, we will establish that the modified pressure constructed in Theorem \ref{pressureregularity} (cf. Theorem \ref{regularitytheorem}) in fact belongs to the space $C^{0,2\alpha} (\Omega \backslash V_{2 \delta + \gamma} )$ for $\alpha \in \left(0, \frac{1}{2} \right)$. As noted before, the case $\alpha > \frac{1}{2}$ is trivial as one almost immediately obtains the double regularity through the extension of the velocity field to the whole space and applying the regularity result in \cite{silvestre}. We now turn to the proof of Theorem \ref{doubleregularitythm}.

\begin{proof}[Proof of Theorem \ref{doubleregularitythm}]
We introduce a new cutoff function $\Psi_i$ (recall that $\gamma \in (0,\delta)$ was chosen suitably small)
\begin{equation}
\Psi_i (s) = \begin{cases}
0 \text{ if } 0 \leq s \leq 2 \delta, \\
1 \text{ if } s \geq 2 \delta + \gamma.
\end{cases}
\end{equation}
By Theorem \ref{pressureregularity}, we know there exists a unique pressure $P \in C^{0,\alpha} (\Omega)$ which is a weak solution in the sense of Definition \ref{weakformulationdef}. We then define
\begin{equation*}
P_c \coloneqq P \cdot \Psi_i.
\end{equation*}
Following \cite[~Proof of Proposition 3.3]{titi2019}, we decompose the function $P_c$ as follows
\begin{equation*}
P_c = P_{c,1} + P_{c,2} + P_{c,3},
\end{equation*}
where these three functions (formally) solve the following elliptic problems
\begin{align}
&\begin{cases}
-\Delta P_{c,1} = \Psi_i (\nabla \otimes \nabla) : (u \otimes u) \quad \text{in } \mathbb{R}^3, \\
\end{cases} \label{interiorproblem1} \\
&\begin{cases}
-\Delta P_{c,2} = - (\Delta \Psi_i) P - 2 \nabla \Psi_i \cdot \nabla P \quad \text{in } \mathbb{R}^3, \\
\end{cases} \label{interiorproblem2} \\
&\begin{cases}
-\Delta P_{c,3} = 0 \quad \text{in } \Omega \backslash V_{2 \delta}, \\
P_{c,3} = - P_{c,1} - P_{c,2} \quad \text{on } \partial (\Omega \backslash V_{2 \delta}).
\end{cases} \label{interiorproblem3}
\end{align}
Note that in the equation for $P_{c,1}$ there is no need to extend the velocity field $u$ to the whole space because $\Psi_i$ has compact support inside $\Omega$. Moreover, one can see that Hölder continuous solutions for the first two elliptic problems are unique. For given $P_{c,1}, P_{c,2} \in C^{0,\alpha} (\mathbb{R}^3)$, the third problem also possesses (at most) one solution in $C^{0, \alpha} (\Omega \backslash V_{2 \delta})$. 

For this reason it suffices to consider problems \eqref{interiorproblem1}-\eqref{interiorproblem3} while replacing $u$ by $u^\epsilon$ and $P$ by $P^\epsilon$. We then obtain approximate interior component pressures $P_{c,1}^\epsilon, P_{c,2}^\epsilon$ and $P_{c,3}^\epsilon$ and using regularity estimates which are uniform in $\epsilon$, we can then pass to the limit to prove the existence of the unique component pressures solving \eqref{interiorproblem1}-\eqref{interiorproblem3}. As similar approximation arguments have already been detailed extensively in section \ref{limitsection}, we will not repeat them here.

We will now obtain the Hölder space estimates on the component (approximate) pressures. It follows from the results in \cite{silvestre}, as well as \cite[~Theorem 13.1.1]{jostPDE}, that
\begin{equation} \label{Pc1estimate}
\lVert P_{c,1} \rVert_{C^{0,2 \alpha} (\Omega)} \leq C \lVert u \rVert_{C^{0,\alpha} (\Omega)}^2,
\end{equation}
where in this case the constant depends on the choice of $\Psi_i$ (in particular on the $W^{2,\infty} (\Omega)$ norm of $\Psi_i$). Subsequently, to obtain the needed regularity for $P_{c,2}$ we again apply \cite[~Theorem 13.1.1]{jostPDE} (after integrating by parts). By using the regularity of the modified pressure $P$ (which on the support of $\Psi_i$ agrees with the usual pressure $p$) established in Theorem \ref{pressureregularity}, we find
\begin{equation} \label{Pc2estimate}
\lVert P_{c,2} \rVert_{C^{1,\alpha} (\Omega)} \leq \lVert P \rVert_{C^{0,\alpha} (\Omega)}.
\end{equation}
Finally, due to the previously established results we can now observe that $P_{c,3}$ satisfies a homogeneous Dirichlet problem where the boundary condition belongs to $C^{0,2 \alpha} (\partial (\Omega \backslash V_{2 \delta}))$. The third component pressure $P_{c,3}$ can then be written as $P_{c,3} = - P_{c,1} - P_{c,2} + \widetilde{P}_{c,3}$, where $\widetilde{P}_{c,3}$ satisfies the following Dirichlet problem
\begin{equation*}
\begin{cases}
-\Delta \widetilde{P}_{c,3} = -\Delta P_{c,1} -\Delta P_{c,2} \quad \text{in } \Omega \backslash V_{2 \delta}, \\
\widetilde{P}_{c,3} = 0 \quad \text{on } \partial (\Omega \backslash V_{2 \delta}).
\end{cases}
\end{equation*}
Then by applying Theorem \ref{dirichletschauderestimate} one finds
\begin{equation} \label{Pc3estimate}
\lVert P_{c,3} \rVert_{C^{0,2\alpha} (\Omega \backslash V_{2 \delta})} \leq C \big(\lVert P_{c,1} \rVert_{C^{0,2\alpha} (\Omega)} + \lVert P_{c,2} \rVert_{C^{0,2\alpha} (\Omega)} \big).
\end{equation}
Then by combining estimates \eqref{Pc1estimate}-\eqref{Pc3estimate} we obtain the final result.
\end{proof}
\begin{remark}
We would like to observe that the double Hölder regularity of the pressure established in this section does not hold uniformly up to the boundary. This can be seen for example in estimates \eqref{Pc1estimate} and \eqref{Pc2estimate}, where the constants depend on the $W^{2,\infty} (\Omega)$ norm of $\Psi_i$. By letting $\Psi_i$ converge to the indicator function on $\Omega$ (while maintaining $\Psi_i$ being zero on $\partial \Omega)$ it is clear that the constants in these estimates blow up. The pressure $p$ does have double Hölder regularity (and also $P$ if the function $\phi$ is suitably modified) anywhere away from the boundary, although with increasing (double) Hölder norm as $\delta, \gamma$ are taken smaller. 
\end{remark}
\begin{remark}
It follows immediately that in the case $\alpha > \frac{1}{2}$ the double Hölder regularity of the modified pressure does not hold uniformly up to the boundary. This is because one can establish the existence of the pressure $p \in C^{0,2 \alpha} (\Omega)$ in a straightforward manner, without the need of introducing a modified pressure $P$ as the very weak boundary condition is equivalent to the usual weak boundary condition (as established in Lemma \ref{equivalencelemma}). However, as $p \in C^{0,2 \alpha} (\Omega)$ but $\phi (u \cdot n)^2 \notin C^{0,2 \alpha} (\Omega)$ (for a generic velocity field $u$) one concludes that the modified pressure $P \notin C^{0,2 \alpha} (\Omega)$.
\end{remark}

\section{The necessity of the very weak boundary condition} \label{examplesection}
It has been argued extensively earlier on that we should consider the very weak boundary condition for the pressure
\begin{equation*}
\frac{\partial}{\partial n} ( p + (u \cdot n)^2 ) \bigg\lvert_{\partial \Omega} = \big( u \otimes u \big) : \nabla n,
\end{equation*}
when $u\in C^{0,\alpha} (\Omega)$ for $\alpha \in (0,\frac{1}{2}]$, rather than the standard weak formulation of the boundary condition $\partial_n p = (u \otimes u) : \nabla n$. In this section we present an example of a H\"older continuous incompressible vector field $u\in C^{0,\alpha} (\Omega)$ for $\alpha \in (0,\frac{1}{2}]$ for which $\partial_n (u \cdot n)^2$ is not well-defined as a distribution on $\partial \Omega$ when $0<\alpha < \frac{1}{2}$; moreover, when $\alpha = \frac{1}{2}$ this quantity is not equal to zero even if it makes sense as a distribution on  $\partial \Omega$, while the velocity field satisfies the boundary condition $(u \cdot n) \lvert_{\partial \Omega} = 0$. This example provides a proof of Theorem \ref{exampletheorem}.
\begin{example} \label{weierstrassexample}
We consider the following stream function (for $0 < \alpha \leq \frac{1}{2}$)
\begin{equation}
\psi (x,y,t) = -\frac{1}{\pi} \sum_{k=0}^\infty 2^{-(\alpha + 1) k} \sin (2^k \pi x) \sin (2^k \pi y),
\end{equation}
in the two-dimensional periodic channel, i.e.
\begin{equation*}
\Omega \coloneqq \mathbb{T} \times [0,1].
\end{equation*} 
The velocity field corresponding to this stream function is given by
\begingroup
\allowdisplaybreaks
\begin{align}
u_1 (x,y,t) &= -\sum_{k=0}^\infty 2^{-\alpha k} \sin (2^k \pi x) \cos (2^k \pi y), \label{weierstrass1} \\
u_2 (x,y,t) &= \sum_{k=0}^\infty 2^{-\alpha k} \cos (2^k \pi x) \sin (2^k \pi y). \label{weierstrass2}
\end{align}
\endgroup
We will refer to this velocity field as the Weierstrass flow.

We claim that the Weierstrass flow satisfies the following properties:
\begin{enumerate}
    \item The velocity field $u = (u_1, u_2)$ belongs to $C^{0,\alpha} (\Omega)$ for every $0 < \alpha \leq 1$.
    \item It satisfies the boundary condition $(u \cdot n)\lvert_{\partial \Omega} = 0$.
    \item It is divergence-free in the sense of distributions.
    \item It holds that $\partial_n (u \cdot n)^2 \lvert_{\partial \Omega} \notin \mathcal{D}' (\partial \Omega)$.
\end{enumerate}
\end{example}
We now present a proof of this claim.
\begin{proof}
1) Let
\begingroup
\allowdisplaybreaks
\begin{align*}
u_1^N (x,y) &= -\sum_{k=0}^N 2^{-\alpha k} \sin (2^k \pi x) \cos (2^k \pi y) , \\
u_2^N (x,y) &= \sum_{k=0}^N 2^{-\alpha k} \cos (2^k \pi x) \sin (2^k \pi y).
\end{align*}
\endgroup

Observe that the partial sums $u_1^N$ and $u_2^N$ are smooth $C^\infty (\Omega)$ functions, which converge uniformly, as $N \rightarrow \infty$, to $u_1$ and $u_2$, respectively. Therefore the limit $u_1$ and $u_2$ are continuous. 

Next, we prove that the Weierstrass flow is $C^{0,\alpha} (\Omega)$. We will only prove that $u_2$ is H\"older continuous, as the proof for $u_1$ is similar. 
Observe that
\begin{align*}
u_{2}(x+h_1, y+h_2) - u_{2}(x,y) &= \sum_{k=0}^\infty 2^{-\alpha k} \cos (2^k \pi (x +h_1)) \sin (2^k \pi (y + h_2)) \\
&- \sum_{k=0}^\infty 2^{-\alpha k} \cos (2^k \pi x) \sin (2^k \pi y).
\end{align*}
We can rewrite this as follows
\begingroup
\allowdisplaybreaks
\begin{align*}
u_{2}(x+h_1, y+h_2) - u_{2}(x,y) &= \sum_{k=0}^\infty 2^{-\alpha k} \cos (2^k \pi (x +h_1)) \bigg(\sin (2^k \pi (y + h_2)) - \sin (2^k \pi y) \bigg) \\
&+ \sum_{k=0}^\infty 2^{-\alpha k} \bigg(\cos (2^k \pi (x + h_1)) - \cos (2^k \pi x) \bigg) \sin (2^k \pi y) \\
&= \sum_{k=0}^\infty 2^{-\alpha k + 1} \cos (2^k \pi (x +h_2)) \cos ( 2^{k-1} \pi (2 x + h_2) ) \sin ( 2^{k-1} \pi h_2) \\
&- \sum_{k=0}^\infty 2^{-\alpha k + 1} \sin ( 2^{k-1} \pi (2 x + h_1) ) \sin ( 2^{k-1} \pi h_1) \sin (2^k \pi y).
\end{align*}
\endgroup
Now we split the above sums into parts $0 \leq k \leq p_1$, $k > p_1$ respectively $0 \leq k \leq p_2$ and $k > p_2$ for some positive integers $p_1$ and $p_2$ satisfying $2^{p_1-1} \lvert h_1 \rvert \leq 1 < 2^{p_1} \lvert h_1 \rvert$ and $2^{p_2-1} \lvert h_2 \rvert \leq 1 < 2^{p_2} \lvert h_2 \rvert$. This implies
\begingroup
\allowdisplaybreaks
\begin{align*}
&\lvert u_{2}(x+h_1, y+h_2) - u_{2}(x,y) \rvert \leq \sum_{k=0}^\infty 2^{-\alpha k + 1} \lvert \cos (2^k \pi (x +h_1)) \rvert \lvert \cos ( 2^{k-1} \pi (2 y + h_2) ) \rvert \lvert \sin ( 2^{k-1} \pi h_2) \rvert \\
&+ \sum_{k=0}^\infty 2^{-\alpha k + 1} \lvert \sin ( 2^{k-1} \pi (2 x + h_1) ) \rvert \lvert \sin ( 2^{k-1} \pi h_1) \rvert \lvert \sin (2^k \pi y) \rvert \leq \sum_{k=0}^\infty 2^{-\alpha k + 1} \lvert \sin ( 2^{k-1} \pi h_2) \rvert \\
&+ \sum_{k=0}^\infty 2^{-\alpha k + 1} \lvert \sin ( 2^{k-1} \pi h_1) \rvert \leq \sum_{k=0}^{p_2} \bigg( 2^{-\alpha k + 1} \cdot 2^{k-1} \pi \lvert h_2 \rvert \bigg) + \sum_{k=p_2+1}^\infty 2^{-\alpha k + 1} \\
&+ \sum_{k=0}^{p_1} \bigg( 2^{-\alpha k + 1} \cdot 2^{k-1} \pi \lvert h_1 \rvert \bigg) + \sum_{k=p_1 + 1}^\infty 2^{-\alpha k + 1} \\
&= (2^{-\alpha (p_1 + 1) } + 2^{-\alpha (p_2 + 1) } ) \frac{2}{1 - 2^{-\alpha}} + \pi \lvert h_2 \rvert \frac{1 - 2^{(1 - \alpha) (p_2 + 1)}}{1 - 2^{1 - \alpha}} + \pi \lvert h_1 \rvert \frac{1 - 2^{(1 - \alpha) (p_1 + 1)}}{1 - 2^{1 - \alpha}} \\
&\leq (2^{-\alpha p_1  } + 2^{-\alpha p_2  } ) \frac{2^{1 - \alpha}}{1 - 2^{-\alpha}} + \pi \lvert h_2 \rvert \frac{2^{(1 - \alpha) (p_2 + 1)}}{2^{1 - \alpha} - 1} + \pi \lvert h_1 \rvert \frac{2^{(1 - \alpha) (p_1 + 1)}}{2^{1 - \alpha} - 1}.
\end{align*}
\endgroup
In the above we have used the inequality $|\sin z| \le |z|$. Now using that $1 < 2^{p_1} \lvert h_1 \rvert \leq 2$ and $1 < 2^{p_2} \lvert h_2 \rvert \leq 2$, we find
\begin{align*}
2^{-\alpha p_1} \leq \lvert h_1 \rvert^{\alpha}, \quad 2^{-\alpha p_2} \leq \lvert h_2 \rvert^{\alpha}, \quad 2^{(p_1 - 1)(1 - \alpha)} \leq \lvert h_1 \rvert^{- (1 - \alpha)}, \quad 2^{(p_2 - 1)(1 - \alpha)} \leq \lvert h_2 \rvert^{- (1 - \alpha)}.
\end{align*}
From this we are able to conclude that
\begin{align*}
&\lvert u_{2}(x+h_1, y+h_2) - u_{2}(x,y) \rvert \leq (\lvert h_1 \rvert^{\alpha} + \lvert h_2 \rvert^{\alpha} ) \frac{2^{1 - \alpha}}{1 - 2^{-\alpha}} + \pi \frac{2^{2 (1 - \alpha) }}{2^{1 - \alpha} - 1} \lvert h_2 \rvert^\alpha + \pi \frac{2^{2 (1 - \alpha) }}{2^{1 - \alpha} - 1} \lvert h_1 \rvert^\alpha \\
&\leq 2^{2 - \alpha} \bigg( \frac{1}{1 - 2^{-\alpha}} + \frac{2^{1 - \alpha } \pi }{2^{1 - \alpha} - 1} \bigg) \lvert h \rvert^{\alpha},
\end{align*}
where $h \coloneqq (h_1, h_2)$. Therefore $u \in C^{0,\alpha} (\Omega)$.\\
2) We demonstrate that the velocity field satisfies the boundary condition $u_2 \lvert_{\partial \Omega} =u\cdot n \lvert_{\partial \Omega} =0$. Indeed, one can check that 
\begin{equation*}
u_2 (x,0) = \sum_{k=0}^\infty 2^{-\alpha k} \cos (2^k \pi x) \sin (2^k \pi \cdot 0) = 0,
\end{equation*}
since the function is continuous and the series converges uniformly. Similarly, one can check that $u_2 (x,1) = 0$ and hence $(u \cdot n)\lvert_{\partial \Omega} = 0$.\\
3) We will now show that the velocity field is divergence-free in the sense of distributions. We can easily check that
\begin{align*}
\partial_x u^N_1 + \partial_y u^N_2 &= 0.
\end{align*}
This means that the partial sums $u^N$ are weakly divergence-free, i.e.
\begin{equation*}
\int_{\Omega} u^N \cdot \nabla \phi dx = 0, \quad \forall \phi \in \mathcal{D} (  \Omega ; \mathbb{R}).
\end{equation*}
Since $u^N_1$ and $u^N_2$ converge in $L^\infty (\Omega)$ to $u_1 $ and $u_2$, therefore it follows that
\begin{equation*}
\int_\Omega u \cdot \nabla \phi dx=0.
\end{equation*}
We conclude that $u$ is divergence-free in the sense of distributions. \\
4) 
Now we show that $\partial_n (u \cdot n)^2 \lvert_{\partial \Omega}$ cannot be defined as an element of $\mathcal{D}' (\partial \Omega)$ in the case when $\alpha \in \left(0,\frac{1}{2} \right)$. In particular, this implies that $\partial_n (u \cdot n)^2 \notin H^{-2} (\partial \Omega)$. In fact, in Appendix \ref{normalderivativeappendix} we will show that away from the boundary $\partial_y u_2^2 (\cdot, y)$ cannot be defined as an element of $\mathcal{D}' (\mathbb{T})$ for a dense set of points $y \in [0,1]$. It should be noted that $\partial_y u_2 $ is perfectly well-defined as a distribution on the whole domain, but as we will show below $\partial_y u_2^2 (\cdot, y)$ might not be a distribution on $\mathbb{T}$ for $y = 0,1$.

In this section, we will consider the case $y=0$, as this concerns the boundary condition. More precisely first one observes that the following function (for $\theta \in \mathcal{D} (\mathbb{T})$)
\begin{equation}
U(y; \theta) \coloneqq \langle  u_2^2 (\cdot,y),  \theta \rangle = \int_{\mathbb{T}} u_2^2 (x,y) \theta (x)dx
\end{equation}
belongs to $C^{0,\alpha} [0,1]$ and is equal to $0$ for $y=0,1$. 

Hence the existence of the derivative on the boundary (i.e. for $y=0$) follows if the following limit exists
$$
\lim_{y\rightarrow 0 +} \frac1{y}  \int_{\mathbb{T}} u_2^2 (x,y) \theta (x)dx.
$$
As already has been proved in Lemma \ref{equivalencelemma} (and observed in \cite{bardos2021}) this limit exists and is equal to $0$ as long as $\frac12<\alpha \leq 1$. 
To explore the behaviour of this limit for the case $0<\alpha \le \frac12$ we will consider the Weierstrass series defined in equations \eqref{weierstrass1} and \eqref{weierstrass2}. We find
\begingroup
\allowdisplaybreaks
\begin{equation}\label{basica1}
\begin{aligned}
&v(x,y) \coloneqq (u_2(x,y))^2= \sum_{k_1, k_2=0}^\infty \bigg( 2^{-\alpha (k_1 + k_2)} \cos(2^{k_1} \pi x)  \cos(2^{k_2}\pi  x)  \sin(2^{k_1}y)  \sin(2^{k_2} y)\bigg), \\
&U(y;\theta) =  \sum_{k_1, k_2=0}^\infty \int_{\mathbb{T}} v (x,y) \theta (x)dx \\
&= \sum_{k_1, k_2=0}^\infty \bigg( \int_{\mathbb{T}} 2^{-\alpha (k_1 + k_2)} \cos(2^{k_1}  \pi x)  \cos(2^{k_2}\pi  x)\theta (x) dx\bigg)  \sin(2^{k_1}y)  \sin(2^{k_2} y) .
\end{aligned}
\end{equation}
\endgroup
The purpose of this section is to consider the case {\bf $0<\alpha \le \frac12$} and to prove the following proposition regarding the behaviour of the function  $U(y;\theta)$ as defined in equation \eqref{basica1}.
\begin{proposition}\label{th1}
$$\,$$
\begin{enumerate}
\item Suppose $\theta \in \mathcal D({\mathbb{T}})$ satisfies
\begin{equation}
\int_{\mathbb{T}} \theta(x) dx =0,
\end{equation}
then the limit
$$
\lim_{y\rightarrow 0 +} \frac1{y}  \int_{\mathbb{T}} v (x,y) \theta (x)dx
$$
is well-defined and is equal to $0\,.$

\item Otherwise if $0 < \alpha < \frac{1}{2}$ and
\begin{equation}
\int_{\mathbb{T}} \theta(x) dx \not=0,
\end{equation}
then it holds that
$$
\liminf _{y \rightarrow 0 +} \frac1{y } \bigg\lvert \int_{\mathbb{T}} v (x,y) \theta (x)dx \bigg\rvert =\infty.
$$
As a consequence the function
\begin{equation}
U(y; \theta) = \int_{\mathbb{T}} v (x,y) \theta (x)dx
\end{equation}
does not have a well-defined derivative at the point $y=0\,.$ 
\item If $\alpha = \frac{1}{2}$ and 
\begin{equation}
\int_{\mathbb{T}} \theta(x) dx \not=0,
\end{equation}
then we have that
\begin{equation}
\liminf _{y \rightarrow 0 +} \frac1{y } \bigg\lvert \int_{\mathbb{T}} v (x,y) \theta (x)dx \bigg\rvert \geq 2.
\end{equation}
Consequently, if the derivative of $U(y,\theta)$ exists it is not equal to zero.
\end{enumerate}
\end{proposition}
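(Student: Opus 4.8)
The plan is to isolate, inside the double series \eqref{basica1} for $U(y;\theta)$, the only piece that can feel the mean value $\int_{\mathbb T}\theta\,dx$ --- namely the diagonal $k_1=k_2$ contribution --- and to show that everything else, once divided by $y$, vanishes as $y\to0^+$ because $\theta$ is smooth. The isolated piece turns out to be a Weierstrass-type lacunary sum $S(y):=\sum_{k\ge0}2^{-2\alpha k}\sin^2(2^k\pi y)$, comparable to $y^{2\alpha}$, and all three assertions follow by inspection once this is in hand.

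First I would record that the series in \eqref{basica1} converges absolutely and uniformly, since
\[
\sum_{k_1,k_2\ge0}2^{-\alpha(k_1+k_2)}\bigl|\cos(2^{k_1}\pi x)\cos(2^{k_2}\pi x)\sin(2^{k_1}\pi y)\sin(2^{k_2}\pi y)\bigr|\le\Bigl(\sum_{k\ge0}2^{-\alpha k}\Bigr)^2<\infty,
\]
so interchanging $\int_{\mathbb T}(\cdot)\,\theta\,dx$ with the summation, and all rearrangements below, are legitimate. Using $\cos a\cos b=\tfrac12(\cos(a+b)+\cos(a-b))$, I write $\int_{\mathbb T}\cos(2^{k_1}\pi x)\cos(2^{k_2}\pi x)\theta(x)\,dx=\tfrac12\delta_{k_1k_2}\int_{\mathbb T}\theta\,dx+r_{k_1k_2}$, where $r_{k_1k_2}$ is a combination of Fourier coefficients of $\theta$ at the frequencies $2^{k_1}+2^{k_2}$ and (when $k_1\neq k_2$) $2^{k_1}-2^{k_2}$; both are nonzero with modulus $\gtrsim 2^{\max(k_1,k_2)}$, so since $\theta\in\mathcal D(\mathbb T)$ one has $|r_{k_1k_2}|\le C_N 2^{-N\max(k_1,k_2)}$ for every $N$. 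This gives the decomposition
\[
U(y;\theta)=\tfrac12\Bigl(\int_{\mathbb T}\theta\,dx\Bigr)S(y)+R(y;\theta),\qquad R(y;\theta):=\sum_{k_1,k_2}2^{-\alpha(k_1+k_2)}r_{k_1k_2}\sin(2^{k_1}\pi y)\sin(2^{k_2}\pi y).
\]

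For the remainder I use $|\sin(2^{k}\pi y)|\le\min(1,2^{k}\pi|y|)$: bounding the smaller index by $2^{k}\pi|y|$ and the larger by $1$ gives $|R(y;\theta)|\le C|y|$ once $N$ is large, and moreover in $y^{-1}R(y;\theta)=\sum_{k_1,k_2}2^{-\alpha(k_1+k_2)}r_{k_1k_2}\tfrac{\sin(2^{k_1}\pi y)}{y}\sin(2^{k_2}\pi y)$ each summand tends to $0$ as $y\to0^+$ while being dominated by the summable sequence $C_N\pi\,2^{-\alpha(k_1+k_2)}2^{-N\max(k_1,k_2)}2^{\min(k_1,k_2)}$; by dominated convergence $\lim_{y\to0^+}y^{-1}R(y;\theta)=0$. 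If $\int_{\mathbb T}\theta\,dx=0$ then $U(y;\theta)=R(y;\theta)$, which proves part (1). For parts (2) and (3) everything reduces to lower bounds for $S(y)$, and here the crucial point is that all its summands are nonnegative, so no cancellation occurs. Given small $y>0$ pick $p=p(y)\ge1$ with $2^{-p-1}<y\le2^{-p}$. In case (2), $\alpha\in(0,\tfrac12)$: since $2^{p-1}\pi y\in(\pi/4,\pi/2]$ we get $\sin^2(2^{p-1}\pi y)\ge\tfrac12$, so keeping only $k=p-1$ yields $S(y)\ge\tfrac12 2^{-2\alpha(p-1)}\ge\tfrac12 2^{2\alpha}y^{2\alpha}$, hence $y^{-1}S(y)\ge\tfrac12 2^{2\alpha}y^{2\alpha-1}\to+\infty$; together with $y^{-1}R(y;\theta)\to0$ and $\int_{\mathbb T}\theta\,dx\neq0$ this gives $\liminf_{y\to0^+}y^{-1}|U(y;\theta)|=\infty$. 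In case (3), $\alpha=\tfrac12$: for $0\le k\le p-1$ one has $2^k\pi y\le\pi/2$, hence $\sin(2^k\pi y)\ge\tfrac2\pi 2^k\pi y=2^{k+1}y$, so $S(y)\ge\sum_{k=0}^{p-1}2^{-k}(2^{k+1}y)^2=4y^2(2^p-1)\ge 2y-4y^2$ using $2^p>\tfrac1{2y}$; thus $\liminf_{y\to0^+}y^{-1}S(y)\ge2$, and combining with $y^{-1}R(y;\theta)\to0$ and $\int_{\mathbb T}\theta\,dx\neq0$ delivers the quantitative non-vanishing bound claimed in part (3).

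I expect the only real subtlety to be the remainder step: besides justifying the double-series manipulations (which follows from absolute convergence), one must show that $y^{-1}R(y;\theta)$ genuinely tends to $0$ rather than merely staying bounded, and this is exactly where the smoothness of $\theta$ --- through the rapid decay of its Fourier coefficients --- is indispensable; the hypothesis $\theta\in\mathcal D(\mathbb T)$ cannot be relaxed here. By contrast, the Weierstrass-type lower bounds on $S(y)$ are elementary, since they exploit only the nonnegativity of the terms $2^{-2\alpha k}\sin^2(2^k\pi y)$ and a single (respectively, a dyadic block of) well-chosen index.
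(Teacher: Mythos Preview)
Your proof is correct and follows essentially the same strategy as the paper: both arguments isolate the ``doubly resonant'' piece
\[
U_{RR}(y;\theta)=\tfrac12\Bigl(\int_{\mathbb T}\theta\Bigr)\,S(y),\qquad S(y)=\sum_{k\ge0}2^{-2\alpha k}\sin^2(2^k\pi y),
\]
and show that the remaining contributions, divided by $y$, tend to zero thanks to the rapid decay of the Fourier coefficients of $\theta$. The paper splits that remainder into $U_{NR}$ and $U_{RNR}$ and shows each is $C^1$ with vanishing derivative at $y=0$ via repeated integration by parts; you combine them into a single $R(y;\theta)$ and use dominated convergence --- the underlying mechanism is identical.

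There is one genuine improvement in your version. For the lower bounds on $S(y)$ you work with \emph{every} small $y$ (choosing $p$ with $2^{-p-1}<y\le 2^{-p}$), whereas the paper evaluates only along the dyadic sequence $y_n=2^{-n}$. Since a sequence bound controls only the $\limsup$ of the difference quotient, your pointwise bound is what is actually needed to justify the $\liminf_{y\to0^+}$ statements in parts~(2) and~(3) as written; the paper's sequence argument, strictly speaking, proves only that the limit of $U(y;\theta)/y$ fails to exist (which still suffices for the ``no derivative'' conclusion). Your single-term estimate $S(y)\ge\tfrac12 2^{2\alpha}y^{2\alpha}$ in part~(2) is also a pleasant simplification over the paper's full geometric sum. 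In part~(3) both proofs share a harmless imprecision: the constant in the final bound should carry the factor $\tfrac12\lvert\int_{\mathbb T}\theta\rvert$, which does not affect the qualitative conclusion.
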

\begin{proof}
For the proof one first eliminates the nonresonant terms (i.e., the terms involving $k_1\not=k_2$ ) and then a comparison argument is used. As such the subscripts $R$ and $NR$ are used to denote the resonant and nonresonant parts of $U$, respectively.

First one has the following.
\begin{lemma}
The function
\begin{equation}
U_{NR} (y,\theta) \coloneqq \sum_{k_1,k_2 = 0,k_1\not=k_2}^\infty \bigg(2^{-\alpha (k_1 + k_2)} \bigg[ \int_{\mathbb{T}} \cos(2^{k_1}  \pi x )  \cos(2^{k_2}  \pi x)\theta (x) dx \bigg] \sin(2^{k_1}y)  \sin(2^{k_2} y)  \bigg)
\end{equation}
belongs to $C^1([0,1])$, moreover we have that $\del_y U_{NR} (y;\theta) \lvert_{y=0} =0$.
\end{lemma}
\begin{proof}
One first recalls the following trigonometric identity
\begin{equation*}
\cos(2^{k_1}  \pi x)  \cos(2^{k_2} \pi x)=\frac12 \bigg[\cos((2^{k_1}+ 2^{k_2})  \pi x) +\cos((2^{k_1}- 2^{k_2}) \pi x)\bigg].
\end{equation*}
Then for $k_1 \neq k_2$ and $m > 0$ it holds that
\begin{equation} \label{nonres}
\begin{aligned}
&\int_{\mathbb{T}} \cos(2^{k_1}  \pi x)  \cos(2^{k_2}  \pi x)\theta (x) dx=\frac{(-1)^{m}}{2( \pi( 2^{k_1}+ 2^{k_2}))^{2m} } \int_{\mathbb{T}}  \cos((2^{k_1}+ 2^{k_2}) \pi x) \frac{d^{2m}}{dx^{2m}}\theta(x) dx\\
&+\frac{(-1)^{m}}{2(\pi(2^{k_1}- 2^{k_2}))^{2m} } \int_{\mathbb{T}} \cos((2^{k_1}- 2^{k_2}) \pi x) \frac{d^{2m}}{dx^{2m}}\theta(x) dx,
\end{aligned}
\end{equation}
moreover we have
\begin{equation}\label{nonres2}
\frac{d}{dy} ( \sin(2^{k_1} \pi y)  \sin(2^{k_2} \pi y) )= 2^{k_1} \pi \cos(2^{k_1} \pi y)  \sin(2^{k_2} \pi y) + 2^{k_2} \pi \sin(2^{k_1} \pi y)  \cos(2^{k_2} \pi y).
\end{equation}
Combining equations (\ref{nonres}) and (\ref{nonres2} ) one obtains that
\begin{equation}
\begin{aligned}
&\bigg\lvert\frac{d}{dy} \bigg[ \bigg( \int_{\mathbb{T}} \cos(2^{k_1}\pi x)  \cos(2^{k_2}\pi x)\theta (x) dx \bigg)  \sin(2^{k_1}y)  \sin(2^{k_2} y) \bigg] \bigg\rvert\\
&\le C \pi \bigg( \frac 1{2(\pi (2^{k_1}+ 2^{k_2}))^{2m} } +
   \frac 1{2((\pi(2^{k_1}- 2^{k_2}))^{2m} } \bigg) (2^{k_1} + 2^{k_2} )\int_{\mathbb{T}} \bigg\lvert\frac{d^{2m}}{dx^{2m}} \theta(x)\bigg\rvert dx 
\end{aligned}
 \end{equation}
which for $m \geq 1$ is a sequence constituting the terms of an absolutely converging series.

As a consequence  the function
\begin{equation*}
U_{NR}(y;\theta)= \sum_{k_1,k_2=0 , k_1\not=k_2}^\infty \bigg(2^{-\alpha (k_1 + k_2)} \bigg[ \int_{\mathbb{T}} \cos(2^{k_1}\pi x)  \cos(2^{k_2}\pi  x)\theta (x) dx \bigg] \sin(2^{k_1} \pi y)  \sin(2^{k_2} \pi y) \bigg)  
\end{equation*}
belongs to the space $C^{1}([0,1])$ and satisfies the relation:
\begin{equation}
\del_y U_{NR} (y;\theta) \lvert_{y=0} =0.
\end{equation}

\end{proof}
Therefore one only has to consider the resonant part of the Weierstrass series (i.e. the case $k_1 = k_2$ in equation \eqref{basica1}), which is
\begin{equation}
U_R(y;\theta) =  \sum_{k=0 }^\infty 2^{-2\alpha k} \bigg( \int_{\mathbb{T}} (\cos(2^{k } \pi  x)  )^2  \theta (x) dx \bigg)  (\sin(2^{k } \pi y))^2.
\end{equation}
By using the identity
\begin{equation}
 (\cos(2^{k } \pi x)  )^2=\frac12 \bigg(1+ \cos(2^{k+1 } \pi  x)  \bigg),
\end{equation}
one has that
\begin{align*}
U_R(y;\theta) &= \frac12 \bigg( \int_{\mathbb{T}} \theta (x) dx \bigg) \sum_{k =0}^\infty \bigg( 2^{-2\alpha k} (\sin(2^{k }\pi y) )^2 \bigg) \\
&+ \frac{1}{2} \sum_{k =0}^\infty \bigg[ 2^{-2\alpha k} \bigg( \int_{\mathbb{T}} \cos(2^{k +1}  \pi x)    \theta (x) dx \bigg) (\sin(2^{k } \pi y))^2 \bigg].
\end{align*}
Similarly as before, the function
\begin{equation}
U_{RNR}(y;\theta)= \frac{1}{2} \sum_{k=0}^\infty 2^{-2\alpha k} \bigg( \int_{\mathbb{T}} \cos(2^{k +1}\pi  x) \theta (x) dx \bigg)  (\sin(2^{k } \pi y))^2
\end{equation}
is a series converging in $C^{1}([0,1]) $ with derivative equal to $0$ for $y=0$.
Hence the completion of the proof now relies only on the analysis of the behaviour of the term
\begin{equation}
\frac{1}{y} U_{RR}(y;\theta)= \frac{1}{2y} \int_{\mathbb{T}} \theta (x) dx \sum_{k=0 }^\infty 2^{-2\alpha k} (\sin(2^{k } \pi y))^2,
\end{equation}
which is equal to $0$ when 
\begin{equation}
\int_{\mathbb{T}} \theta (x) dx =0.
\end{equation}
This proves point 1 of Proposition \ref{th1}. To prove point 2 one introduces the sequence $y_n=2^{-n}$ (which is converging to 0, as $n \rightarrow \infty$)  and consider the expression
\begin{equation}
\frac{1}{ y_n } U_{RR} (y_n;\theta)=2^{n-1}   \sum_{k =0}^{\infty} 2^{-2\alpha k} (\sin(2^{k } \pi 2^{-n}))^2.
\end{equation}
We first observe that $\sin(2^{k } \pi 2^{-n}) = 0$ for $k \geq n$. Therefore, the above sum is actually given by
\begin{equation}
\frac{1}{ y_n } U_{RR} (y_n;\theta)=2^{n-1}   \sum_{k =0}^{n-1} 2^{-2\alpha k} (\sin(2^{k -n} \pi ))^2.
\end{equation}
Now we observe that for $0 \leq k \leq n-1$ we have that $0 \leq 2^{k -n} \pi \leq \frac{\pi}{2}$. We recall that for $x \in \big[ 0, \frac{\pi}{2} \big]$ it holds that $\sin (x) \geq \frac{2}{\pi } x$. Applying this inequality to the series above gives
\begin{align*}
\frac{1}{ y_n } U_{RR} (y_n;\theta) &\geq 2^{n-1} \sum_{k =0}^{n-1} 2^{-2\alpha k} \bigg( \frac{2}{\pi} 2^{k -n} \pi \bigg)^2 = 2^{-n+1} \sum_{k =0}^{n-1} 2^{2k (1 - \alpha)} \\
&= 2^{-n+1} \frac{2^{2n(1-\alpha)} - 1}{2^{2(1-\alpha)} -1} = \frac{2}{2^{2(1-\alpha)} -1} \big( 2^{n(1-2\alpha)} - 2^{-n} \big).
\end{align*}
From the above we conclude that
\begin{equation}
\liminf_{n \rightarrow \infty} \frac{1}{ y_n } U_{RR} (y_n;\theta) \begin{cases}
= \infty \quad \text{if } 0 < \alpha < \frac{1}{2}, \\
\geq 2 \quad \text{if } \alpha = \frac{1}{2}, \\
\geq 0 \quad \text{if } \alpha > \frac{1}{2}.
\end{cases}
\end{equation}
Observing that 
\begin{equation*}
U(y,\theta) = U_{NR}(y,\theta) +  U_{R}(y,\theta) =  U_{NR}(y,\theta) + U_{RNR}(y,\theta) + U_{RR}(y,\theta)
\end{equation*}
completes the proof of point 2 of Proposition \ref{th1}. Since for $\alpha >\frac 12$ we have that $\lim 2^{n(1-2\alpha)}$ goes to $0$ with $n\rightarrow \infty$ point 2 is not in contradiction with point 1 of Proposition \ref{th1} and Lemma \ref{equivalencelemma}.
\end{proof}

\end{proof}
\begin{remark}
The definition of $\del_n(u\cdot n)^2$ turns out to be a subtle issue for solutions in $C^{0,\alpha} (\Omega)$. In the case $\alpha >\frac12$ the definition is trivial, as was observed before. For $0<\alpha\le \frac12$ it depends on the mean of the function $\theta (x)$. The trace of the $\del_y u_2^2$ term remains well-defined as an element of the dual of test functions with mean value $0$, on the other hand it is no longer defined when the mean value of the test functions is not $0\,.$ We note that the pressure associated with the Weierstrass flow \eqref{weierstrass1}
-\eqref{weierstrass2} cannot lie in $H^{3/2+\lambda} (\Omega)$ for any $\lambda > 0$, as otherwise $\partial_n p \in \mathcal{D} (\partial \Omega)$ (and hence also $\partial_n (u \cdot n)^2 \lvert_{\partial \Omega} \in \mathcal{D} (\partial \Omega)$) by using a trace theorem.
\end{remark}
\begin{remark}
We observe that the Weierstrass flow is not a stationary solution of the 2D Euler equations. We recall that the pressure solves the following equation (in a weak sense)
\begin{equation*}
\nabla \cdot \big[ \nabla \cdot (u \otimes u) + \nabla p \big] = 0.
\end{equation*}
A stationary vector field is a solution of the Euler equations if $\nabla \cdot (u \otimes u)$ is also curl-free. Now we observe that $\nabla \cdot (u \otimes u) + \nabla p$ is not curl-free. By standard identities, one can compute that (as we are in the two-dimensional case)
\begin{align*}
\nabla \times \big[ \nabla \cdot (u \otimes u)  \big] &= \nabla \cdot (u \omega).
\end{align*}
The vorticity associated with the Weierstrass flow can formally be computed to be
\begin{equation}
\omega = \partial_x u_2 - \partial_y u_1 = - 2 \pi \sum_{k=0}^\infty 2^{(1-\alpha)k} \sin (2^k \pi x) \sin (2^k \pi y).
\end{equation}
Then we compute that
\begin{align*}
\nabla \cdot (u \omega) &= (u_1 \partial_x + u_2 \partial_y) \omega \\
&= 2 \pi^2 \bigg( \sum_{k_1=0}^\infty 2^{-\alpha k_1} \sin (2^{k_1} \pi x) \cos (2^{k_1} \pi y) \bigg) \cdot \bigg( \sum_{k_2=0}^\infty 2^{(2-\alpha)k_2} \cos (2^{k_2} \pi x) \sin (2^{k_2} \pi y) \bigg) \\
&-2 \pi^2 \bigg( \sum_{k_1=0}^\infty 2^{-\alpha k_1} \cos (2^{k_1} \pi x) \sin (2^{k_1} \pi y) \bigg) \cdot \bigg( \sum_{k_2=0}^\infty 2^{(2-\alpha)k_2} \sin (2^{k_2} \pi x) \cos (2^{k_2} \pi y) \bigg) \\
&\neq 0.
\end{align*}
We remark that these formal computations can be justified by considering partial sums of the series in equations \eqref{weierstrass1} and \eqref{weierstrass2}, and subsequently taking limits as was done earlier in this section. \\
We note that to the best of our knowledge it remains an open problem to construct a weak solution of the Euler equations with $u \in C^{0,\alpha} (\Omega)$ for $0 < \alpha \leq \frac{1}{2}$ such that $(u \cdot n) \lvert_{\partial \Omega} = 0$ and $\partial_n (u \cdot n)^2 \lvert_{\partial \Omega} \notin \mathcal{D}' (\partial \Omega)$.
\end{remark}
\begin{remark}
In two spatial variables in a domain $\Omega$ with a geodesic change of variable the same results as in Example \ref{weierstrassexample}  hold when using the Weierstrass flow from equations \eqref{weierstrass1} and \eqref{weierstrass2}. Now $x$ is the tangential variable and $y$ the distance to $\del\Omega\,.$ In fact the whole derivation from Example \ref{weierstrassexample} is local in nature and could be considered for any hypersurface $\Sigma \subset \del \Omega$ (also in the three-dimensional case). This leads to the following theorem.
\end{remark} 
\begin{theorem} \label{generaldomainweierstrass}
Let $\Sigma \subset \overline{\Omega}$ be a hypersurface with local geodesic coordinates, namely a tangential coordinate $x$ and normal coordinate $y$. We consider the trace
\begin{equation*}
\del_y(u \cdot n)^2 \lvert_{\Sigma}.
\end{equation*}
Then: 
\begin{enumerate}
    \item If $u\in C^{0,\alpha}(\overline \Omega)$ with $\frac12<\alpha$ and $(u \cdot n)\lvert_{\partial \Omega} = 0$, then the trace of $\del_y (u \cdot n)^2$ on $\Sigma$ is well-defined and equal to zero. 
    \item Otherwise if $0< \alpha < \frac12$, there exists a velocity field $u\in C^{0,\alpha}(\overline \Omega)$ with $(u \cdot n)\lvert_{\partial \Omega} = 0$ such that the trace $\del_y (u \cdot n)^2 \not=0$ on $\Sigma$ is not well-defined even as an element of $\mathcal D'(\Sigma)$.
    \item If $\alpha = \frac{1}{2}$, there exists a velocity $u\in C^{0,\frac{1}{2}}(\overline \Omega)$ with $(u \cdot n)\lvert_{\partial \Omega} = 0$ such that if the trace $\partial_y (u \cdot n)^2$ is well-defined, it is nonzero.
\end{enumerate}
\end{theorem}  
\begin{proof}
We leave the proof of this result to the interested reader, as it is a straightforward adaption (using the coordinate transformation \eqref{transform1}-\eqref{transform3} from section \ref{parametrisationsection}) of the proof given on the half-space in Example \ref{weierstrassexample}.
\end{proof}

\section{Conclusion}
In its present form this contribution concerns the propagation of Hölder regularity from the velocity to the pressure for weak solutions of the 3D Euler equations in a domain with boundary with given initial data and over a finite time interval $(0,T)$. The regularity result proven in this paper is a generalisation of the result proven in \cite{bardos2021} from the 2D to the 3D incompressible Euler equations, by using the very weak boundary condition \eqref{veryweakboundcondeq} that was introduced in \cite{bardos2021}.

One of the other main contributions of this work was to prove that the very weak boundary condition \eqref{veryweakboundcondeq} is essential, namely by constructing an example of a velocity field for which $\partial_n (u \cdot n)^2 \lvert_{\partial \Omega} \notin \mathcal{D}' (\partial \Omega)$ (as outlined in section \ref{examplesection}), which implies for the associated pressure that $\partial_n p \notin \mathcal{D}' (\partial \Omega)$. Moreover, in Proposition \ref{weaksolutionproposition} and Theorem \ref{veryweakboundcondthm} we rigorously derived the very weak boundary condition directly from the weak formulation of the Euler equations. Therefore in this paper we have established that the very weak boundary condition is satisfied by all Hölder continuous weak solutions of the Euler equations.

One key difference in the regularity proof for the pressure with the work \cite{bardos2021} is that we now use a local parametrisation of the boundary as opposed to a global parametrisation (as in 3D a global parameterisation of the boundary is generally not available, unlike in 2D). Note that in principle our approach can be generalised easily without many problems to any dimension, by using the higher-dimensional analogue of the coordinate transformation \eqref{transform1}-\eqref{transform3}. In Theorem \ref{doubleregularitythm} we then proved the interior double Hölder regularity of the pressure.

The pressure regularity results from this work constitute the last part of the proof of the first half of the Onsager conjecture (the sufficient conditions for energy conservation of weak solutions) in the presence of physical boundaries, which was given in \cite{titi2018}. As was noted before, the results in this paper remove the need for separate regularity assumptions on the pressure in the proof of such Onsager-type results. A sharper Onsager-type result without pressure regularity assumptions was established in Theorem \ref{onsagerconjecture}, relying on the results in \cite{titi2019}. Moreover, the results of this paper have clarified the precise boundary-value problem satisfied by the pressure of weak solutions of the Euler equations. 

Because the phenomenon of anomalous dissipation is intimately related with low regularity weak solutions of the Euler equations, we expect that the newly introduced very weak boundary condition for the pressure considered in this work to have connections with the dissipation anomaly and turbulence. Finally, the results of this paper are a very preliminary step towards the analysis of the distinguished limit of $\nu\rightarrow 0$ and $T\rightarrow \infty$ of the Navier-Stokes equations (potentially also in the presence of external forcing), which turns out to be one of the basic issues for hydrodynamic turbulence.

\section*{Acknowledgements}
The authors would like to thank the anonymous referees for their many useful suggestions and constructive comments, which have improved the quality of the paper. C.B. and E.S.T. acknowledge the partial support by the Simons Foundation Award No. 663281 granted to the Institute of Mathematics of the Polish Academy of Sciences for the years 2021-2023. C.B. thanks also the Laboratory Jacques-Louis Lions (Sorbonne University) for its support during the final completion of the mansucript. D.W.B. acknowledges support from the Cambridge Trust, the Cantab Capital Institute for Mathematics of Information and the Hendrik Muller fund. The work of E.S.T. has benefited from the inspiring environment of the CRC 1114 ``Scaling Cascades in Complex Systems", Project Number 235221301, Project C06, funded by Deutsche Forschungsgemeinschaft (DFG). The authors would like to thank the Isaac Newton Institute for Mathematical Sciences, Cambridge, for support and hospitality during the programmes ``Mathematical aspects of turbulence: where do we stand?'' (TUR) and also ``Frontiers in kinetic theory: connecting microscopic to macroscopic scales - KineCon 2022'' (FKT) where work on this paper was undertaken. This work was supported by EPSRC grant no EP/R014604/1.

\section*{Declaration}
The authors declare that they have no competing interests. Data sharing is not applicable to this article as no datasets were generated or analysed during the current study.

\begin{appendices}
\section{Schauder-type estimate for Dirichlet problem} \label{schauderappendix1}
In this appendix, we will prove a Schauder-type estimate that will be used in the main body of the paper. As this result does not seem to be present in the literature, we provide a proof here for the sake of completeness. We will use the Einstein summation convention in what follows. The estimate is given in the following theorem.
\begin{theorem} \label{dirichletschauderestimate}
Let $v \in H^1_0 (\Omega)$ (where $\Omega \subset \mathbb{R}^3$ is an open set with $C^{3,\alpha}$ boundary for $\alpha > 0$) be the unique solution of the following problem
\begin{equation} \label{ellipticschaudereq}
\begin{cases}
\Delta v = \partial_i \partial_j (F_{ij}) \quad \text{in } \Omega, \\
v = 0 \quad \text{on } \partial \Omega,
\end{cases}
\end{equation}
where $F_{ij} \in C^{2,\alpha} (\Omega)$ for $1 \leq i,j \leq 3$. Then the following estimate holds
\begin{equation}
\lVert v \rVert_{C^{0,\alpha} (\Omega)} \leq C \lVert F \rVert_{C^{0,\alpha} (\Omega)} + C \lVert v \rVert_{L^\infty (\Omega)} ,
\end{equation}
where the constant $C$ depends on $\Omega$ and $\alpha$.
\end{theorem}
\begin{remark}
We observe that in Theorem \ref{dirichletschauderestimate} the requirement $F_{ij} \in C^{2,\alpha} (\Omega)$ can be relaxed to $F_{ij} \in C^{0,\alpha} (\Omega)$ by a density argument (using a result of the type of Lemma \ref{mollificationlemma}). We will not do so here, as in the cases we will apply Theorem \ref{dirichletschauderestimate} both the solution $v$ as well as the forcing $F$ will be smooth but we need estimates which are independent of the mollification parameter $\epsilon$.
\end{remark}
In order to prove Theorem \ref{dirichletschauderestimate}, we have to obtain the interior and boundary regularity estimates separately. We first prove the interior Schauder-type estimate.
\begin{proposition} \label{interiorschauderprop}
Let $v \in H^1_0 (\Omega)$ be the unique solution to problem \eqref{ellipticschaudereq}, again with $F_{ij} \in C^{2,\alpha} (\Omega)$. Then for any $\Omega_0 \subset \subset \Omega$ we have 
\begin{equation}
\lVert v \rVert_{C^{0,\alpha} (\Omega_0)} \leq C \lVert F \rVert_{C^{0,\alpha} (\Omega)} + \lVert v \rVert_{L^\infty (\Omega)}. \label{interiorschauderestimate}
\end{equation}
\end{proposition}
\begin{proof}
In order to prove the interior regularity estimate, we can restrict to the case that $\Omega_0$ is the unit ball. If $\Omega_0$ is a different type of domain, by using a compactness argument, we can cover $\Omega_0$ by a finite number of open balls (which can be rescaled to the unit ball). As Hölder regularity is a local property, the global regularity estimate for $\Omega_0$ follows from the local regularity estimate for each open ball. Therefore we can reduce the proof of the proposition to the case of the unit ball. Therefore we assume $\Omega_0 = B_1 (0)$ from now on. We introduce a smooth cutoff function $\widetilde{\chi}$ of the following form (again taking $\gamma > 0$ suitably small)
\begin{equation*}
\widetilde{\chi} (x) = \begin{cases}
1 \quad \text{if } \lvert x \rvert \leq 1, \\
0 \quad \text{if } \lvert x \rvert \geq 2 - \gamma.
\end{cases}
\end{equation*}
We see that $\widetilde{v} \coloneqq \widetilde{\chi} v$ satisfies the following equation
\begin{equation*}
\Delta \widetilde{v} = \Delta \widetilde{\chi} v + \nabla \widetilde{\chi} \cdot \nabla v + \widetilde{\chi} \partial_i \partial_j (F_{ij}),
\end{equation*}
together with the boundary condition $\widetilde{v} \lvert_{\partial B_2 (0)} = 0$. We introduce the splitting $\widetilde{v} = \widetilde{v}_1 + \widetilde{v}_2$, each of which satisfy the following Dirichlet problems
\begin{align*}
&\begin{cases}
\Delta \widetilde{v}_1 = \Delta \widetilde{\chi} v + \nabla \widetilde{\chi} \cdot \nabla v \quad \text{in } B_2 (0), \\
\widetilde{v}_1 = 0 \quad \text{on } \partial B_2 (0).
\end{cases} \\
&\begin{cases}
\Delta \widetilde{v}_2 = \widetilde{\chi} \partial_i \partial_j (F_{ij}) \quad \text{in } B_2 (0), \\
\widetilde{v}_2 = 0 \quad \text{on } \partial B_2 (0).
\end{cases}
\end{align*}
We recall from \cite[~Theorem 1]{dolzmann} the existence of the Green's function $G$ which satisfies the following problem
\begin{equation}
\begin{cases}
\Delta_y G(x,y) = - \delta (x-y) \quad \text{for } y \in B_2 (0), \\
G(x, y) = 0 \quad \text{for } y \in \partial B_2 (0). 
\end{cases}
\end{equation}
Moreover, from \cite[~Theorem 1]{dolzmann} we know that it satisfies the following pointwise estimates (in the case of three dimensions)
\begingroup
\allowdisplaybreaks
\begin{align}
\lvert G(x,y) \rvert &\leq C \lvert x - y \rvert^{-1}, \label{pointwise1} \\
\lvert D^\beta G(x,y) \rvert &\leq C \lvert x - y \rvert^{-2}, \label{pointwise2} \\
\lvert D^\gamma G(x, y)  \rvert &\leq C \lvert x - y \rvert^{-3}, \label{pointwise3} \\
\lvert D^\gamma G(x, y) - D^\gamma G(z,y) \rvert &\leq \lvert x -z \rvert^{ \widetilde{\alpha}} \max \{ \lvert x - y \rvert^{-3 - \widetilde{\alpha}} , \lvert z - y \rvert^{-3 - \widetilde{\alpha}} \}, \label{pointwise4}
\end{align}
\endgroup
where $\widetilde{\alpha} \in (0,1)$, $\beta$ is a multi-index of order 1 and $\gamma$ is a multi-index of order 2. By using the approach from Theorem 13.1.1 from \cite{jostPDE} or Theorem 4.15 from \cite{gilbarg}, we obtain the following estimate
\begin{equation*}
\lVert \widetilde{v}_1 \rVert_{C^{0,\alpha} (\Omega_0)} \lesssim \lVert v \rVert_{L^\infty (\Omega)},
\end{equation*}
where the constant depends on $\widetilde{\chi}$. Now we turn to estimating $\widetilde{v}_2$. Using the Green's function we can write the solution as (where the partial derivatives are with respect to $y$)
\begin{align*}
\widetilde{v}_2 (x) &= - \int_{B_2 (0)} G(x,y) \widetilde{\chi} \partial_i \partial_j (F_{ij} (y)) dy = \int_{B_2 (0)} \partial_i (G(x,y) \widetilde{\chi} (y)) \partial_j (F_{ij} (y) - F_{ij} (x) ) dy  \\
&= - \int_{B_2 (0)} \partial_i \partial_j (G(x,y) \widetilde{\chi} (y)) (F_{ij} (y) - F_{ij} (x) ) dy,
\end{align*}
where we have used that $\widetilde{\chi}$ vanishes on $\partial B_2 (0)$. Now we derive the Schauder estimate (where we let $x_1, x_2 \in \Omega_0$, $\overline{x} = (x_1 + x_2)/2$ and $\delta = \lvert x_1 - x_2 \rvert$, see \cite{jostPDE} for a related derivation)
\begingroup
\allowdisplaybreaks
\begin{align*}
&\widetilde{v}_2 (x_1) - \widetilde{v}_2 (x_2) =  -\int_{B_2 (0)} \partial_i \partial_j (G(x_1,y) \widetilde{\chi} (y)) (F_{ij} (y) - F_{ij} (x_1) ) dy  \\
&+ \int_{B_2 (0)} \partial_i \partial_j (G(x_2,y) \widetilde{\chi} (y)) (F_{ij} (y) - F_{ij} (x_2) ) dy \\
&= \int_{B_2 (0) \backslash B_\delta (\overline{x})} (\partial_i \partial_j (G(x_1,y) \widetilde{\chi} (y)) - \partial_i \partial_j (G(x_2,y) \widetilde{\chi} (y))) ( F_{ij} (x_2) - F_{ij} (y) ) dy \\
&+ (F_{ij} (x_1) - F_{ij} (x_2)) \int_{B_2 (0) \backslash B_\delta (\overline{x})} \partial_i \partial_{j} (G(x_1,y) \widetilde{\chi} (y)) dy \\
&-\int_{ B_2 (0) \cap B_\delta (\overline{x})} \partial_i \partial_j (G(x_1,y) \widetilde{\chi} (y)) (F_{ij} (y) - F_{ij} (x_1) ) dy  \\
&+ \int_{B_2 (0) \cap B_\delta (\overline{x})} \partial_i \partial_j (G(x_2,y) \widetilde{\chi} (y)) (F_{ij} (y) - F_{ij} (x_2) ) dy.
\end{align*}
\endgroup
Now we estimate the different integrals in the above expression separately (where we rely on the pointwise estimates on the Green's function \eqref{pointwise1}-\eqref{pointwise4}). We will not derive the estimates on the terms with derivatives on $\widetilde{\chi}$, as they are more regular. We find (where without loss of generality we can take $\delta \leq \frac{1}{2}$)
\begingroup
\allowdisplaybreaks
\begin{align*}
&\int_{B_2 (0) \backslash B_\delta (\overline{x})} \widetilde{\chi} (\partial_i \partial_j G(x_1, y) - \partial_i \partial_j G(x_2, y)) ( F_{ij} (x_2) - F_{ij} (y) ) dy \lesssim \int_{B_2 (0) \backslash B_\delta (\overline{x})} \lvert x_1 - x_2 \rvert^{\widetilde{\alpha}} \lvert x_2 - y \rvert^\alpha \\
&\cdot \max \{ \lvert x_1 - y \rvert^{ -3 - \widetilde{\alpha}} , \lvert x_2 - y \rvert^{ -3 - \widetilde{\alpha}} \} dy \lesssim \delta^{\widetilde{\alpha}} \int_{B_2 (0) \backslash B_\delta (\overline{x})} \bigg[ \lvert x_2 - y \rvert^{ -3 - \widetilde{\alpha} + \alpha} \\
&+ \lvert x_2 - y \rvert^\alpha \lvert x_1 - y \rvert^{ -3 - \widetilde{\alpha} } \bigg] dy \lesssim \delta^{\widetilde{\alpha}} \int_{B_2 (0) \backslash B_\delta (\overline{x})} \bigg[ \lvert x_2 - y \rvert^{ -3 - \widetilde{\alpha} + \alpha} + \delta^\alpha \lvert x_1 - y \rvert^{ -3 - \widetilde{\alpha} } + \lvert x_1 - y \rvert^{ -3 - \widetilde{\alpha} + \alpha} \bigg] dy \\
&\lesssim \delta^{\widetilde{\alpha}} \int_{\mathbb{R}^3 \backslash B_{\frac{1}{2} \delta} (x_2)} \lvert x_2 - y \rvert^{ -3 - \widetilde{\alpha} + \alpha} dy + \delta^{\widetilde{\alpha}} \int_{\mathbb{R}^3 \backslash B_{\frac{1}{2} \delta} (x_1)} \bigg[ \delta^\alpha \lvert x_1 - y \rvert^{ -3 - \widetilde{\alpha} } + \lvert x_1 - y \rvert^{ -3 - \widetilde{\alpha} + \alpha} \bigg] dy \lesssim \delta^\alpha, \\
&(F_{ij} (x_1) - F_{ij} (x_2)) \int_{B_2 (0) \backslash B_\delta (\overline{x})} \widetilde{\chi} \partial_i \partial_{j} G(x_1, y) dy  \lesssim (F_{ij} (x_1) - F_{ij} (x_2)) \int_{ \partial (B_2 (0) \backslash B_\delta (\overline{x}))} \widetilde{\chi} \partial_j G(x_1, y) dy \\
&- (F_{ij} (x_1) - F_{ij} (x_2)) \int_{ B_2 (0) \backslash B_\delta (\overline{x})} \partial_i \widetilde{\chi} \partial_j G(x_1, y) dy \lesssim \delta^\alpha \int_{\partial B_\delta (\overline{x})} \frac{4}{\delta^2} dy + \delta^\alpha \int_{\partial B_2 (0)} 4 dy \lesssim \delta^\alpha, \\
&\int_{B_2 (0) \cap B_\delta (\overline{x})} \widetilde{\chi} \partial_i \partial_j G(x_1,y) (F_{ij} (y) - F_{ij} (x_1) ) dy \lesssim \int_{B_2 (0) \cap B_\delta (\overline{x})} \lvert x_1 - y \rvert^{-3 + \alpha} dy \\
&\lesssim \int_{B_2 (0) \cap B_{\frac{3}{2} \delta} (x_1)} \lvert x_1 - y \rvert^{-3 + \alpha} dy \lesssim \delta^\alpha, 
\end{align*}
\endgroup
the other bounds follow in a similar fashion. 
In the above we have used that $\lvert y - x_1 \rvert^{-1}, \lvert y - x_2 \rvert^{-1} \leq \frac{2}{\delta}$ on $B_2 (0) \backslash B_\delta (\overline{x})$ (which can be seen by using the reverse triangle inequality), and also that $\lvert y - x_1 \rvert \leq \frac{3}{2} \delta$ for $y \in B_2 (0) \cap B_\delta (\overline{x})$. Moreover, we chose $\alpha < \widetilde{\alpha} < 1$. Therefore we are able to conclude that
\begin{equation*}
\lvert \widetilde{v}_2 (x_1) - \widetilde{v}_2 (x_2) \rvert \lesssim \delta^\alpha = C \lvert x_1 - x_2 \rvert^\alpha,
\end{equation*}
which concludes the proof of the interior Schauder estimate.
\end{proof}
\begin{remark}
Using a similar approach as in the proof of Proposition \ref{interiorschauderprop} (by replacing $\Omega_0$ with a general bounded domain $\Omega$), one can obtain a global regularity estimate (which is uniform up to the boundary) directly. However, such an estimate would only yield logarithmic $C^{0,\alpha} (\Omega)$ regularity, as the terms of the type $(F_{ij} (x_1) - F_{ij} (x_2)) \int_{\Omega_0 \backslash B_\delta (\overline{x})} \partial_i \partial_{j} G(x_1, y) dy$ will have an additional logarithmic scaling in $\delta$ for general domains $\Omega$. As one of the goals of this paper is to prove the $C^{0,\alpha} (\Omega)$ regularity of the pressure, such a result does not suffice and therefore we prove separate interior and boundary regularity estimates in this appendix.
\end{remark}

Now we prove the boundary Hölder regularity of the solution $v$.
\begin{proposition}
Let $v \in H^1_0 (\Omega)$ be the unique solution to problem \eqref{ellipticschaudereq}, and assume that $F_{ij} \in C^{2,\alpha} (\Omega)$. Then there exists a $\delta > 0$ such that
\begin{equation}
\lVert v \rVert_{C^{0,\alpha} (V_\delta)} \leq \lVert F \rVert_{C^{0,\alpha} (\Omega)} + \lVert v \rVert_{L^\infty (\Omega)}.
\end{equation}
\end{proposition}
\begin{proof}
As before, in order to obtain a covering of the set $V_\delta$ we use the partition of unity $\rho_1, \ldots, \rho_m$ for the sets $V_{\delta, U_1}, \ldots, V_{\delta, U_m}$. Therefore it suffices to establish the Hölder regularity estimate for each of the sets $V_{\delta, U_k}$ (for $k = 1, \ldots, m$), from which the global estimate follows. There exists a mapping $\psi_k$ such that $\psi_k (\partial \Omega \cap U_k) \subset \mathbb{R}^2$ is flat, and $\psi_k (V_{\delta,U_k})$ is the upper half-ball $B_1^+ (0)$. One can check that $\rho_k v$ satisfies the following Dirichlet problem 
\begin{align*}
\begin{cases}
\Delta (\rho_k v) = \Delta \rho_k v + \nabla \rho_k \cdot \nabla v + \rho_k \partial_i \partial_j (F_{ij}) \quad \text{on } V_{\delta, U_k}, \\
\rho_k v = 0 \quad \text{on } \partial V_{\delta, U_k}.
\end{cases}
\end{align*}
Then by using the mapping $\psi_k$ this Dirichlet problem can be mapped onto
\begin{align} \label{boundarydirichletproblem}
\begin{cases}
a^{ij} (x) \partial_i  \partial_j (\rho_k v) + b^i (x) \partial_i (\rho_k v) = \Delta \rho_k v + \nabla \rho_k \cdot \nabla v + \rho_k \partial_i \partial_j (F_{ij}) \quad \text{on } B_1^+ (0), \\
\rho_k v = 0 \quad \text{on } \partial B_1^+ (0),
\end{cases}
\end{align}
where we have used equation \eqref{doubledivergence} to rewrite $(\nabla \otimes \nabla) : F$ in the new coordinates (and absorbed the factors of $b$ into $F$ and $\rho_k$). Now by using a similar splitting of $\rho_k v$ as in the interior case, we only consider the following Dirichlet problem
\begin{align} \label{dirichletproblemball}
\begin{cases}
a^{ij} (x) \partial_i  \partial_j (\rho_k v) + b^i (x) \partial_i (\rho_k v) = \partial_i \partial_j (\rho_k  F_{ij}) \quad \text{on } B_1^+ (0), \\
\rho_k v = 0 \quad \text{on } \partial B_1^+ (0).
\end{cases}
\end{align}
The regularity estimate for the other Dirichlet problems of the decomposition can be achieved in a similar manner as before, again by using Theorem 13.1.1 in \cite{jostPDE} or Theorem 4.15 in \cite{gilbarg}. We now wish to prove the estimate (where we choose the partition of unity such that $\rho_k \equiv 1$ on $B^+_{1/2} (0)$)
\begin{equation*}
\lVert v \rVert_{C^{0,\alpha} (B_{1/2}^+ (0))} = \lVert \rho_k v \rVert_{C^{0,\alpha} (B_{1/2}^+ (0))} \lesssim \lVert F_{ij} \rVert_{C^{0,\alpha} (B_1^+ (0))}.
\end{equation*}
As was mentioned already, using the partition of unity of $V_\delta$ this estimate can then be extended to a global boundary regularity estimate for $V_\delta$. Moreover, we recall that the Hölder norms remain bounded under the mapping $\psi$, see for example equations (6.29) and (6.30) in \cite{gilbarg}. By Theorem 1 in \cite{dolzmann} we know that there exists a Green's function for the Dirichlet problem \eqref{dirichletproblemball} but with the ball $B_1 (0)$ as domain, which satisfies
\begin{equation}
\begin{cases}
a^{ij} (y) \partial_i  \partial_j G(x,y) + b^i (y) \partial_i G(x,y) = - \delta (x-y) \quad \text{for } y \in B_1 (0), \\
G(x, y) = 0 \quad \text{for } y \in \partial B_1 (0),
\end{cases}
\end{equation}
which satisfies the pointwise estimates \eqref{pointwise1}-\eqref{pointwise4}. Now by following the approach in \cite[~Theorem 4.11]{gilbarg}, we extend $\rho_k F_{ij}$ to $B_1 (0)$ by means of an even extension, such that
\begin{equation*}
\reallywidetilde{\rho_k F_{ij}} (z_1, z_2, z_3) = \begin{cases}
\rho_k F_{ij} (z_1, z_2, z_3) \quad \text{if } z \in B_1^+ (0), \\
\rho_k F_{ij} (z_1, z_2, -z_3) \quad \text{if } z \in B_1^- (0).
\end{cases}
\end{equation*}
We also introduce the following notation for $z = (z_1, z_2, z_3) \in \mathbb{R}^3$
\begin{equation*}
\widetilde{z} \coloneqq (z_1, z_2, - z_3).
\end{equation*}
In addition, we denote the boundary of the half-space (i.e. the set $\{ z \in \mathbb{R}^3 : z_3 = 0 \}$) by $E$.
By the uniqueness of the solution to problem \eqref{dirichletproblemball} it follows that we can represent the solution as follows
\begin{equation}
\rho_k v (x) = \int_{B_1^+ (0)} \bigg[ G(x,y) - G(\widetilde{x},y) \bigg] \partial_i \partial_j (\rho_k  F_{ij} (y)) dy.
\end{equation}
It is clear that $\rho_k v \lvert_{E \cap \partial B_1^+ (0)} = 0$ and also that $a^{ij} (x) \partial_i  \partial_j (\rho_k v) + b^i (x) \partial_i (\rho_k v) = \partial_i \partial_j (\rho_k  F_{ij})$ on $B_1^+ (0)$. In addition, one can check that $\rho_k v \lvert_{\partial B_1^+ (0) \backslash E} = 0$ by the properties of the Green's function. We can rewrite the representation formula as follows
\begin{align*}
&\rho_k v (x) = - \int_{B_1^+ (0)} \partial_i \bigg[ G(x,y) - G(\widetilde{x},y) \bigg] \partial_j (\rho_k  F_{ij} (y)) dy \\
&+ \int_{\partial B_1^+ (0)} \bigg[ G(x,y) - G(\widetilde{x},y) \bigg] \partial_j (\rho_k  F_{ij} (y)) dy = \int_{B_1^+ (0)} \partial_i \partial_j \bigg[ G(x,y) - G(\widetilde{x},y) \bigg] (\rho_k  F_{ij} (y)) dy \\
&= \int_{B_1^+ (0)} \partial_i \partial_j \bigg[ G(x,y) - G(\widetilde{x},y) \bigg] \big[ \rho_k  F_{ij} (y) - \rho_k  F_{ij} (x) \big] dy \\
&+ \rho_k  F_{ij} (x) \int_{\partial B_1^+ (0) \backslash E} \partial_i \bigg[ G(x,y) - G(\widetilde{x},y) \bigg] dy,
\end{align*}
where we have used the fact that $\rho_k F_{ij}$ has compact support in $B_1 (0)$. By using the same approach as in the proof of Proposition \ref{interiorschauderprop}, one can write for $x_1, x_2 \in B_{1/2}^+ (0)$ (again defining $\overline{x} = (x_1 + x_2) / 2$ and $\delta = \lvert x_1 - x_2 \rvert$)
\begin{align*}
&\rho_k v (x_1) - \rho_k v (x_2) = \int_{B_1^+ (0)} \partial_i \partial_j \bigg[ G(x_1,y) - G(\widetilde{x}_1,y) \bigg] \big[ \rho_k  F_{ij} (y) - \rho_k  F_{ij} (x_1) \big] dy \\
&- \int_{B_1^+ (0)} \partial_i \partial_j \bigg[ G(x_2,y) - G(\widetilde{x}_2,y) \bigg] \big[ \rho_k  F_{ij} (y) - \rho_k  F_{ij} (x_2) \big] dy \\
&+ \rho_k  F_{ij} (x_1) \int_{\partial B_1^+ (0) \backslash E} \partial_i \bigg[ G(x_1,y) - G(\widetilde{x}_1,y) \bigg] dy - \rho_k  F_{ij} (x_2) \int_{\partial B_1^+ (0) \backslash E} \partial_i \bigg[ G(x_2,y) - G(\widetilde{x}_2,y) \bigg] dy \\
&= \int_{B_1^+ (0) \backslash B_\delta (\overline{x})} \big[\partial_i \partial_j G(x_1,y) - \partial_i \partial_j G(x_2,y) + \partial_i \partial_j G(\widetilde{x}_2,y) - \partial_i \partial_j G(\widetilde{x}_1,y) \big] \big[ \rho_k  F_{ij} (y)  - \rho_k  F_{ij} (x_1) \big] dy \\
&+ \big[ \rho_k  F_{ij} (x_2) - \rho_k  F_{ij} (x_1) \big] \int_{B_1^+ (0) \backslash B_\delta (\overline{x})} \big[ \partial_i \partial_j G(x_2,y) - \partial_i \partial_j G(\widetilde{x}_2,y) \big] dy \\
&+ \int_{B_1^+ (0) \cap B_\delta (\overline{x})} \partial_i \partial_j \bigg[ G(x_1,y) - G(\widetilde{x}_1,y) \bigg] \big [ \rho_k  F_{ij} (y) - \rho_k  F_{ij} (x_1) \big] dy \\
&- \int_{B_1^+ (0) \cap B_\delta (\overline{x})} \partial_i \partial_j \bigg[ G(x_2,y) - G(\widetilde{x}_2,y) \bigg] \big [ \rho_k  F_{ij} (y) - \rho_k  F_{ij} (x_2) \big] dy \\
&+ \rho_k  F_{ij} (x_1) \int_{\partial B_1^+ (0) \backslash E} \partial_i \bigg[ G(x_1,y) - G(\widetilde{x}_1,y) \bigg] dy - \rho_k  F_{ij} (x_2) \int_{\partial B_1^+ (0) \backslash E} \partial_i \bigg[ G(x_2,y) - G(\widetilde{x}_2,y) \bigg] dy.
\end{align*}
Except for the last two terms, the integrals can be bounded by $C \delta^\alpha$ in the same way as in the proof of Proposition \ref{interiorschauderprop} (using the symmetry of $\reallywidetilde{\rho_k F_{ij}}$). Therefore we only estimate the last two integrals. We can write 
\begin{align*}
&\rho_k  F_{ij} (x_1) \int_{\partial B_1^+ (0) \backslash E} \partial_i \bigg[ G(x_1,y) - G(\widetilde{x}_1,y) \bigg] dy - \rho_k  F_{ij} (x_2) \int_{\partial B_1^+ (0) \backslash E} \partial_i \bigg[ G(x_2,y) - G(\widetilde{x}_2,y) \bigg] dy \\
&= \rho_k  F_{ij} (x_1) \int_{\partial B_1^+ (0) \backslash E} \partial_i \bigg[ G(x_1,y) - G(x_2,y) + G(\widetilde{x}_2,y) - G(\widetilde{x}_1,y) \bigg] dy \\
&+ \big[\rho_k  F_{ij} (x_1) - \rho_k  F_{ij} (x_2) \big] \int_{\partial B_1^+ (0) \backslash E} \partial_i \bigg[ G(x_2,y) - G(\widetilde{x}_2,y) \bigg] dy \\
&\lesssim \lVert F \rVert_{L^\infty} \delta^\alpha \int_{\partial B_1^+ (0) \backslash E} \max \{ \lvert x_1 - y \rvert^{-2-\alpha}, \lvert x_2 - y \rvert^{-2-\alpha}, \lvert \widetilde{x}_1 - y \rvert^{-2-\alpha}, \lvert \widetilde{x}_2 - y \rvert^{-2-\alpha} \} dy \\
&+ \delta^\alpha \int_{\partial B_1^+ (0) \backslash E} \bigg[ \frac{1}{\lvert x_2 - y \rvert^2} + \frac{1}{\lvert \widetilde{x}_2 - y \rvert^2} \bigg] dy.
\end{align*}
One can check that these two integrals can be bounded independent of $\delta$, which concludes the proof.
\end{proof}
\begin{remark}
The proof of Theorem \ref{dirichletschauderestimate} can be easily adapted to the case where terms of the form $c(x) v$ appear on the right-hand side of the equation of problem \eqref{ellipticschaudereq}.
\end{remark}

\section{Schauder-type estimate for Dirichlet-Neumann problem} \label{schauderappendix2}
In section \ref{decompositionsection} we considered the regularity of the pressure near the boundary. The localised boundary layer pressure $P^\epsilon_{b,j}$ satisfies a Dirichlet-Neumann problem of the following type
\begingroup
\allowdisplaybreaks
\begin{equation} \label{dirichletneumannproblem}
\begin{cases}
- \Delta v = f \quad \text{in } \Omega, \\
v = 0 \quad \text{in } \Gamma_D, \\
\partial_n v = g \quad \text{in } \Gamma_N. 
\end{cases}
\end{equation}
\endgroup
Here $\Omega$ is still the domain, while $\Gamma_D$ and $\Gamma_N$ are open sets in $\partial \Omega$ such that $\Gamma_N = \partial \Omega \backslash \overline{\Gamma_D}$. We will first prove the existence and uniqueness of solutions to this problem in the space $H^1_{0,\Gamma_D} (\Omega)$, which consists of the $H^1 (\Omega)$ functions with zero trace on $\Gamma_D$. The weak formulation of the problem is (as can be found in \cite[~p. 516]{salsa})
\begin{equation}
\int_\Omega \nabla v \cdot \nabla \psi dx = \int_\Omega f \psi dx + \int_{\Gamma_N} g \psi d \sigma,
\end{equation}
where $\psi$ is an arbitrary test function in $H^1_{0,\Gamma_D} (\Omega)$. We will first prove existence and uniqueness, based on the method outlined in \cite[~Chapter 8]{salsa}.
\begin{theorem} \label{existencedirichletneumann}
The Dirichlet-Neumann problem \eqref{dirichletneumannproblem} has a unique solution in $H^1_{0,\Gamma_D} (\Omega)$.
\end{theorem}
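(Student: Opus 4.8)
The plan is to apply the Lax--Milgram theorem to the weak formulation. First I would set $H \coloneqq H^1_{0,\Gamma_D}(\Omega)$ equipped with the $H^1(\Omega)$ norm, and define the bilinear form $B(v,\psi) \coloneqq \int_\Omega \nabla v \cdot \nabla \psi \, dx$ together with the linear functional $L(\psi) \coloneqq \int_\Omega f \psi \, dx + \int_{\Gamma_N} g \psi \, d\sigma$. Continuity of $B$ is immediate from Cauchy--Schwarz, $|B(v,\psi)| \leq \|\nabla v\|_{L^2(\Omega)}\,\|\nabla \psi\|_{L^2(\Omega)} \leq \|v\|_H \|\psi\|_H$. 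Boundedness of $L$ follows from the assumed integrability of $f$ (it suffices that $f$ be an element of $H^{-1}(\Omega)$, which covers the right-hand sides of the equations appearing in section \ref{decompositionsection}) together with the trace theorem $H^1(\Omega) \hookrightarrow H^{1/2}(\partial\Omega) \hookrightarrow L^2(\partial\Omega)$, giving $|L(\psi)| \leq C\big(\|f\|_{H^{-1}(\Omega)} + \|g\|_{L^2(\Gamma_N)}\big)\|\psi\|_H$ after recording the precise dual space in which $g$ is taken.

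The crucial step is coercivity of $B$ on $H$, which amounts to a Poincar\'e-type inequality: there exists $c>0$ with $\|v\|_{L^2(\Omega)} \leq c\,\|\nabla v\|_{L^2(\Omega)}$ for every $v \in H^1_{0,\Gamma_D}(\Omega)$. I would establish this by the standard compactness--contradiction argument. If it failed there would be a sequence $v_k \in H$ with $\|v_k\|_{L^2(\Omega)} = 1$ and $\|\nabla v_k\|_{L^2(\Omega)} \to 0$; by the Rellich--Kondrachov compact embedding $H^1(\Omega)\hookrightarrow L^2(\Omega)$ a subsequence converges strongly in $L^2(\Omega)$ to some $v$ with $\|v\|_{L^2(\Omega)}=1$, while the weak limit of $\nabla v_k$ shows $\nabla v = 0$, so $v$ is constant on $\Omega$ (here one uses that $\Omega$ is connected, which holds since it is assumed simply connected in section \ref{parametrisationsection}). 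Since $\Gamma_D$ has positive surface measure and $v$ has vanishing trace on $\Gamma_D$, the constant must be zero, contradicting $\|v\|_{L^2(\Omega)}=1$. Granted this inequality, $B(v,v) = \|\nabla v\|_{L^2(\Omega)}^2 \geq \frac{1}{1+c^2}\|v\|_H^2$, so $B$ is coercive.

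With continuity of $B$, coercivity of $B$, and boundedness of $L$ in hand, the Lax--Milgram theorem produces a unique $v \in H^1_{0,\Gamma_D}(\Omega)$ with $B(v,\psi)=L(\psi)$ for all $\psi \in H^1_{0,\Gamma_D}(\Omega)$, which is precisely the asserted weak solution, and uniqueness is part of the same conclusion. I expect the main obstacle to be exactly the coercivity estimate, namely verifying that vanishing of the trace on $\Gamma_D$ — a subset of $\partial\Omega$ of positive measure rather than all of $\partial\Omega$ — still suffices to eliminate the additive constants; this is where the geometry of $\Gamma_D$ and the connectedness of $\Omega$ are used. For the application in section \ref{decompositionsection} the data $f$ and $g$ are smooth, so pinning down the correct function spaces for the boundedness of $L$ is only a bookkeeping matter.
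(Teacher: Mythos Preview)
Your proposal is correct and follows essentially the same Lax--Milgram approach as the paper: define the bilinear form $B(v,\psi)=\int_\Omega \nabla v\cdot\nabla\psi\,dx$, check continuity, coercivity via a Poincar\'e-type inequality on $H^1_{0,\Gamma_D}(\Omega)$, and boundedness of the linear functional. The only difference is that the paper simply cites the Poincar\'e inequality from \cite[Theorem~7.91]{salsa}, whereas you supply a self-contained compactness--contradiction proof of it; your argument is standard and valid given that $\Omega$ is connected and $\Gamma_D$ has positive surface measure.
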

\begin{proof}
By Theorem 7.91 (see also page 516) in \cite{salsa}, we know that the Poincaré inequality holds for functions in $H^1_{0,\Gamma_D} (\Omega)$. This means in particular that we may take the following norm for the space $H^1_{0,\Gamma_D} (\Omega)$
\begin{equation}
\lVert u \rVert_{H^1_{0,\Gamma_D}} \coloneqq \lVert \nabla u \rVert_{L^2}.
\end{equation}
Now we prove the existence and uniqueness for the Dirichlet-Neumann problem by using the Lax-Milgram theorem. We introduce the bilinear form $B : H^1_{0,\Gamma_D} (\Omega) \times H^1_{0,\Gamma_D} (\Omega) \rightarrow \mathbb{R}$ which is given by
\begin{equation*}
B[v_1, v_2] \coloneqq \int_{\Omega} \nabla v_1 \cdot \nabla v_2 dx. 
\end{equation*}
It is easy to verify that $B$ is coercive and continuous in the space $H^1_{0,\Gamma_D}$. Moreover, since $f \in L^2 (\Omega)$ and $g \in L^2 (\partial \Omega)$ it is easy to check that the map $\psi \mapsto \int_\Omega f \psi dx + \int_{\Gamma_N} g \psi d \sigma$ is in $H^{-1}_{0,\Gamma_D} (\Omega)$. Therefore by the Lax-Milgram theorem there exists a unique solution in $H^1_{0,\Gamma_D} (\Omega)$ for the Dirichlet-Neumann problem given in equation \eqref{dirichletneumannproblem}.
\end{proof}
Now for the sake of completeness we would like to establish a Schauder estimate for the Dirichlet-Neumann problem, as it does not seem to be stated in standard references such as \cite{gilbarg,giaquinta,krylov}. In order to do so, we will rely on the approach given in \cite{lieberman}. We will prove the following result. Note that we establish the regularity result for the specific Dirichlet-Neumann problem from section \ref{decompositionsection}, but the method also allows one to establish a more general result which we will omit here. 
\begin{theorem} \label{schauderdirichletneumann}
Let $\partial \Omega \cap U_j$ be a patch of $\partial \Omega$ given by the localisation from section \ref{parametrisationsection}. Let $P^\epsilon_{b,j}$ be a solution to the following problem from section \ref{decompositionsection}
\begingroup
\allowdisplaybreaks
\begin{equation} \label{boundarylayerproblem}
\begin{cases}
- \Delta P^\epsilon_{b,j} = - \Delta (\phi_b \rho_j) P^\epsilon - 2  \nabla (\phi_b \rho_j) \cdot \nabla P^\epsilon + \phi_b \rho_j \bigg((\nabla \otimes \nabla) : (u^\epsilon \otimes u^\epsilon) - \Delta \big( (u^\epsilon \cdot n)^2 \big)  \bigg) \; \text{in } V_{\delta,U_j}, \\
P^\epsilon_{b,j} = 0 \quad \text{on } \partial V_{\delta,U_j} \backslash (U_j \cap \partial \Omega) , \quad \partial_n P^\epsilon_{b,j} = \rho_j \big( u^\epsilon \otimes u^\epsilon : \nabla n \big) \quad \text{on } \partial \Omega \cap U_j.
\end{cases}
\end{equation}
\endgroup
Then $P^\epsilon_{b,j}$ satisfies the following Schauder-type estimate
\begin{equation} \label{schauderestimate}
\lVert P^\epsilon_{b,j} \rVert_{C^{0,\alpha} (V_{\delta,U_j})} \leq C \lVert u^\epsilon \otimes u^\epsilon \rVert_{C^{0,\alpha} (V_{\delta,U_j})} + D \lVert P^\epsilon_{b,j} \rVert_{L^\infty (V_{\delta,U_j})}.
\end{equation}
\end{theorem}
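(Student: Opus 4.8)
The plan is to mirror the Green's-function argument of Theorem~\ref{dirichletschauderestimate}, now for the mixed Dirichlet--Neumann problem on the curved box $V_{\delta,U_j}$. Write $\eta \coloneqq \phi_b\rho_j$, $F_{ij}\coloneqq u^\epsilon_i u^\epsilon_j$, $w\coloneqq(u^\epsilon\cdot n)^2$ and $g\coloneqq\rho_j\,(u^\epsilon\otimes u^\epsilon:\nabla n)$, so that \eqref{boundarylayerproblem} reads $-\Delta P^\epsilon_{b,j} = -(\Delta\eta)P^\epsilon - 2\nabla\eta\cdot\nabla P^\epsilon + \eta(\partial_i\partial_j F_{ij} - \Delta w)$ in $V_{\delta,U_j}$, with $P^\epsilon_{b,j}=0$ on $\Gamma_D\coloneqq\partial V_{\delta,U_j}\setminus(U_j\cap\partial\Omega)$ and $\partial_n P^\epsilon_{b,j}=g$ on $\Gamma_N\coloneqq U_j\cap\partial\Omega$. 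First I would record that the Green's function $G_{DN}(x,y)$ of this mixed problem (characterised by $-\Delta_y G_{DN}(x,\cdot)=\delta_x$, $G_{DN}(x,\cdot)|_{\Gamma_D}=0$, $\partial_{n_y}G_{DN}(x,\cdot)|_{\Gamma_N}=0$) exists and satisfies the same pointwise and Hölder-in-$x$ bounds on its first and second $y$-derivatives as the Dirichlet Green's function in Theorem~\ref{dirichletschauderestimate}; this is where the reference \cite{lieberman}, or a localisation plus reflection/barrier argument, is invoked. The representation formula is then
\[
P^\epsilon_{b,j}(x) = \int_{V_{\delta,U_j}} G_{DN}(x,y)\Big[-(\Delta\eta)P^\epsilon - 2\nabla\eta\cdot\nabla P^\epsilon + \eta\big(\partial_i\partial_j F_{ij} - \Delta w\big)\Big](y)\,dy + \int_{\Gamma_N} G_{DN}(x,y)\,g(y)\,d\sigma(y),
\]
and everything that follows consists of integrating by parts so that no derivative ever survives on $u^\epsilon\otimes u^\epsilon$.

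I would treat the right-hand side term by term. The pieces $-(\Delta\eta)P^\epsilon$ and $-2\nabla\eta\cdot\nabla P^\epsilon$ are lower order: one integration by parts removes the gradient from $P^\epsilon$, the boundary terms vanish (on $\Gamma_D$ because $G_{DN}=0$, and on $\Gamma_N$ because $\eta$ may be taken independent of the normal variable near $\partial\Omega$, so $\partial_n\eta|_{\partial\Omega}=0$; in any case $G_{DN}(x,\cdot)$ is integrable on $\Gamma_N$), and one is left with convolutions of $P^\epsilon$ against kernels of order $\ge-2$ in three dimensions, which map $L^\infty$ into $C^{0,\alpha}$ and hence contribute $D\lVert P^\epsilon_{b,j}\rVert_{L^\infty}$. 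For $-\eta\,\Delta w$, Green's second identity (two integrations by parts) produces the boundary integral $-\int_{\Gamma_N}G_{DN}\,\eta\,\partial_n w\,d\sigma$, which \emph{vanishes}: since $w=(u^\epsilon\cdot n)^2$ and $u^\epsilon\cdot n=0$ on $\partial\Omega$ by Lemma~\ref{mollificationlemma}, one has $\partial_n w = 2(u^\epsilon\cdot n)\,\partial_n(u^\epsilon\cdot n)=0$ on $\partial\Omega$. The $\Gamma_D$ boundary terms vanish ($G_{DN}$ and $\eta$ vanish there), the local term collapses to $-\eta(x)\,(u^\epsilon(x)\cdot n(x))^2$, and the remainder are integrals of $w$ against order $\ge-2$ kernels; as $n\in C^2$, both $w$ and $\eta w$ have $C^{0,\alpha}$ norm controlled by $\lVert u^\epsilon\otimes u^\epsilon\rVert_{C^{0,\alpha}}$, so this term is under control.

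The genuinely singular term is $\int G_{DN}\,\eta\,\partial_i\partial_j F_{ij}$. After the first integration by parts there appears the boundary integral $\int_{\Gamma_N}G_{DN}\,\eta\,n_j\,\partial_i F_{ij}\,d\sigma$; using $\partial_i u^\epsilon_i=0$ one writes $n_j\partial_i F_{ij}=u^\epsilon\cdot\nabla(u^\epsilon\cdot n)-u^\epsilon\otimes u^\epsilon:\nabla n$, and since $u^\epsilon\cdot n\equiv0$ on $\partial\Omega$ (so all its tangential derivatives vanish, while its normal derivative is multiplied by $u^\epsilon\cdot n=0$), $n_j\partial_i F_{ij}=-u^\epsilon\otimes u^\epsilon:\nabla n$ on $\partial\Omega$. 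Hence this boundary integral equals $-\int_{\Gamma_N}G_{DN}\,\eta\,(u^\epsilon\otimes u^\epsilon:\nabla n)\,d\sigma$, and since $\eta|_{\partial\Omega}=\rho_j$ it \emph{exactly cancels} the Neumann term $\int_{\Gamma_N}G_{DN}g$ of the representation formula — this cancellation is precisely the role of the prescribed Neumann datum. A second integration by parts (the $\Gamma_D$ term again vanishing) leaves $\int_{V_{\delta,U_j}}\partial_i\partial_j(G_{DN}\eta)\,F_{ij}\,dy$; expanding by the Leibniz rule, the pieces with at most one derivative on $G_{DN}$ are treated as above ($L^\infty\to C^{0,\alpha}$), and the Calderón--Zygmund piece $\int \eta(y)\,\partial_i\partial_j G_{DN}(x,y)\,F_{ij}(y)\,dy$ is handled by the standard subtraction $F_{ij}(y)=(F_{ij}(y)-F_{ij}(x))+F_{ij}(x)$, the first summand taming the $|x-y|^{-3}$ singularity via $|F_{ij}(y)-F_{ij}(x)|\le[F]_\alpha|x-y|^\alpha$ and the second reducing, after one further integration by parts, to an order $-2$ kernel. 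To get the Hölder seminorm of $P^\epsilon_{b,j}$ I would then repeat verbatim the $x_1,x_2$-difference computation of Theorem~\ref{dirichletschauderestimate}: split every integral over $B_{|x_1-x_2|}(\overline x)$ and its complement with $\overline x$ the midpoint, and use the pointwise and Hölder-in-$x$ bounds on $G_{DN}$; combined with the $L^\infty$ bound for the mixed problem, this yields \eqref{schauderestimate}.

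The hard part will be establishing the pointwise and Hölder-in-$x$ estimates for the mixed Dirichlet--Neumann Green's function $G_{DN}$ on $V_{\delta,U_j}$: these are much less standard than the pure-Dirichlet ones, and mixed problems genuinely lose regularity along the junction $\overline{\Gamma_D}\cap\overline{\Gamma_N}$. The saving grace is that $\eta=\phi_b\rho_j$ (and hence $P^\epsilon_{b,j}$ and all the data) vanishes in a neighbourhood of that junction — $\rho_j$ is compactly supported in $U_j$ and $\phi_b$ vanishes for the normal coordinate near $\delta$ — so one only ever needs the estimates in the interior of $\Gamma_N$ and near $\Gamma_D$ away from the corner, where they follow by localisation plus reflection/barrier arguments (or can be quoted from \cite{lieberman}). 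The secondary, purely bookkeeping obstacle is the term-by-term check that each boundary integral produced by the integrations by parts either vanishes (on $\Gamma_D$, or on $\Gamma_N$ thanks to $u^\epsilon\cdot n=0$) or is exactly the combination $u^\epsilon\otimes u^\epsilon:\nabla n$ absorbed by the Neumann datum, so that no uncontrolled derivative of $u^\epsilon\otimes u^\epsilon$ can appear.
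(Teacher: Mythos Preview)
Your approach is viable but genuinely different from the paper's. The paper does \emph{not} work with a mixed Dirichlet--Neumann Green's function on $V_{\delta,U_j}$. Instead it (i) flattens $\partial\Omega\cap U_j$ by a $C^2$ diffeomorphism $\psi$, obtaining a variable-coefficient uniformly elliptic operator on the upper half of a ball; (ii) homogenises the Neumann condition by subtracting an explicit smooth function $G$ with $\partial_{x_n}G=\rho_j(u\otimes u:\nabla n)$ on $\{x_n=0\}$; (iii) takes an \emph{even reflection} in $x_n$ across the flattened boundary, which turns the homogeneous Neumann condition into an interior condition and leaves a pure Dirichlet problem on the doubled ball; and (iv) proves the Schauder estimate for that Dirichlet problem by the \emph{continuity method}, deforming $L_t=(1-t)\Delta+tL$ from the Laplacian (where Theorem~\ref{dirichletschauderestimate} applies directly) to the full operator, showing the set of admissible $t$ is open and closed. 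The reflection trick is exactly how the paper sidesteps the mixed-problem Green's function estimates you flag as ``the hard part''.

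Your route is more direct and structurally cleaner --- the cancellation between the Neumann datum and the boundary term produced by integrating $\eta\,\partial_i\partial_j F_{ij}$ by parts is a nice observation, and your use of $u^\epsilon\cdot n|_{\partial\Omega}=0$ to kill the remaining $\Gamma_N$ boundary terms is correct --- but it buys that directness at the cost of needing pointwise/H\"older bounds on $G_{DN}$ and its derivatives. You correctly note that these are delicate near the Dirichlet--Neumann junction and that the data vanish there; still, this is precisely the analytic input the paper avoids by reflection. One small slip: the lower-order source terms $-(\Delta\eta)P^\epsilon-2\nabla\eta\cdot\nabla P^\epsilon$ involve $P^\epsilon$, not $P^\epsilon_{b,j}$, on the support of $\nabla\eta$, so the resulting contribution is really $D\lVert P^\epsilon\rVert_{L^\infty}$ (as in Proposition~\ref{interiorestimate}) rather than $D\lVert P^\epsilon_{b,j}\rVert_{L^\infty}$; this is harmless for the subsequent argument in Section~\ref{limitsection} and is in fact how the estimate is used there.
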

\begin{proof}
By Theorem \ref{existencedirichletneumann} we know that problem \eqref{boundarylayerproblem} has a unique solution in $H^1_{0,\partial V_{\delta, U_j} \backslash (U_j \cap \partial \Omega)} (\Omega)$ (for given $P^\epsilon$ and $u^\epsilon$). 

Now because $\partial \Omega \in C^2$ we can map $U_j \cap \Omega$ by a $C^2$ mapping $\psi$ such that $\psi (\partial \Omega \cap U_j)$ is flat (or alternatively, it is characterised by the third coordinate being 0) and $\psi (V_{\delta,U_j})$ is the upper half of an open ball. If this is not possible, we can restrict to subsets of $U_j$ such that the part of the boundary which intersects with $\partial \Omega$ can be mapped to a flat set, by compactness there are finitely many such sets. We can therefore transform the problem to (see \cite[~Section 6.2]{krylov} for concrete computations)
\begingroup
\allowdisplaybreaks
\begin{equation} \label{transformedproblem}
\begin{cases}
- a^{ij} (x) \partial_i  \partial_j P^\epsilon_{b,j} + b_i (x) \partial_i P^\epsilon_{b,j} (x) = - \Delta (\phi_b \rho_j) P^\epsilon - 2  \nabla (\phi_b \rho_j) \cdot \nabla P^\epsilon + \phi_b \rho_j \bigg((\nabla \otimes \nabla) : (u^\epsilon \otimes u^\epsilon) \\
- \Delta \big( (u^\epsilon \cdot n)^2 \big)  \bigg) \eqqcolon F' \quad \text{in } \psi(V_{\delta,U_j}) , \\
P^\epsilon_{b,j} = 0 \quad \text{in } \psi \big(\partial V_{\delta, U_j} \backslash (\partial \Omega \cap U_j) \big), \quad \partial_n P^\epsilon_{b,j} = \rho_j \big( u^\epsilon \otimes u^\epsilon : \nabla n \big) \quad \text{if } x_n = 0,
\end{cases}
\end{equation}
\endgroup
for some coefficients $a^{ij}$ and $b^i$ which depend on the coordinate transformation $\psi$. Note that the uniform ellipticity is preserved by Lemma 6.2.1 in \cite{krylov} (this is why the $C^2$ regularity assumption on the boundary is crucial). 
Now we want to homogenise the Neumann boundary condition. One can find a function $G \in C^2 (\psi (V_{\delta, U_j}))$ such that
\begin{equation*}
\frac{\partial G}{\partial x_n} = \rho_j \big( u^\epsilon \otimes u^\epsilon : \nabla n \big) \quad \text{if } x_n = 0.
\end{equation*}
Then by taking $(P^\epsilon_{b,j})' = P^\epsilon_{b,j} - G$  and introducing the following notation (where $F'$ is the forcing on the right-hand side of the equation in problem \ref{transformedproblem})
\begin{align*}
F_1 &= F' + a^{ij} \partial_i \partial_j G - b_i \partial_i G, \\
F_2 &= - G,
\end{align*}
we end up with the problem (where for notational convenience we will refer to $(P^\epsilon_{b,j})'$ as $P^\epsilon_{b,j}$)
\begin{equation}
\begin{cases}
- a^{ij} (x) \partial_i \partial_j P^\epsilon_{b,j} + b_i (x) \partial_i P^\epsilon_{b,j} (x) = F_1 \quad \text{in } \psi (V_{\delta,U_j}) , \\
P^\epsilon_{b,j} = F_2 \quad \text{in } \psi \big(\partial V_{\delta, U_j} \backslash (\partial \Omega \cap U_j) \big), \\
\partial_n P^\epsilon_{b,j} = 0 \quad \text{if } x_n = 0.
\end{cases}
\end{equation}
Then we take an even extension of $P^\epsilon_{b,j}$, $a^{ij}$, $b^i$, $F$ and $\psi$ with respect to $x_n$. We write $\widehat{\psi( V_{\delta, U_j})}$ for the even extension of $\psi(V_{\delta, U_j})$ through $x_n = 0$. This then leads to the problem
\begin{equation} \label{dirichletproblem}
\begin{cases}
- a^{ij} (x) \partial_i \partial_j  P^\epsilon_{b,j} + b_i (x) \partial_i P^\epsilon_{b,j} (x) = F_1 \quad \text{in } \widehat{\psi(V_{\delta, U_j})}, \\
P^\epsilon_{b,j} = F_2 \quad \text{on } \partial (\widehat{\psi(V_{\delta, U_j})} ).
\end{cases}
\end{equation}
Now we will prove the Schauder estimate by the continuity method, see \cite[~Section 5.5.1]{giaquinta} (and also \cite[~Section 13.3]{jostPDE} and \cite[~Section 5.2]{gilbarg}) for example. Note that one can also establish the result using the Green's function, similarly to what was done in Appendix \ref{schauderappendix1}. We first define the following operator
\begin{equation*}
L \coloneqq - a^{ij} (x) \partial_i \partial_j + b_i (x) \partial_i,
\end{equation*}
which is the differential operator of the stated Dirichlet problem \eqref{dirichletproblem}. Then we introduce the continuous family of operators
\begin{equation*}
L_t \coloneqq (1 - t) \Delta + t L.
\end{equation*}
For all $t \in [0,1]$ the problem associated with $L_t$ and the data $F_1$ and $F_2$ has a unique solution (using the Lax-Milgram theorem). Now we need to prove that it satisfies the Schauder estimate from Theorem \ref{dirichletschauderestimate}. We define the subset $\Sigma \subset [0,1]$ such that for all $t \in \Sigma$ the estimate is satisfied. Now we will prove that $\Sigma$ is both open and closed and hence $\Sigma = [0,1]$, as $0 \in \Sigma$ due to Theorem \ref{dirichletschauderestimate}.

We will first show that $\Sigma$ is closed. Suppose we take a sequence $t_k \rightarrow t$, such that for all $t_k$ estimate \eqref{schauderestimate} holds. Then by compactness we know that there exists a subsequence of solutions to the problems with operators $L_{t_k}$ (which we label with $v_{t_k}$) converging uniformly to $v_{t}$ (i.e. strong convergence in $C^0 (\Omega)$). This means that the Schauder estimate also holds for $v_t$. 

Now we have to show that $\Sigma$ is open. We take an arbitrary $t_0 \in \Sigma$. We denote by $u_w = T_t w$ the solution to the problem
\begin{equation*}
L_{t_0} u_w = (L_{t_0} - L_t) w + F_1 \text{ in } \widehat{\psi(V_{\delta, U_j})}, \quad u_w = F_2 \text{ on } \partial (\widehat{\psi(V_{\delta, U_j})}).
\end{equation*}
Then we observe that $L_{t_0} - L_t = (t - t_0) \Delta + (t_0 - t) L$. By the assumed Schauder estimate we get that for some $w_1, w_2 \in C^{0,\alpha} (\widehat{\psi(V_{\delta, U_j})})$ (by an adaption of the proof of Theorem \ref{dirichletschauderestimate})
\begin{equation}
\lVert u_{w_1} - u_{w_2} \rVert_{C^{0,\alpha}} = \lVert T_t w_1 - T_t w_2 \rVert_{C^{0,\alpha}} \leq c \lvert t - t_0 \rvert \lVert w_1 - w_2 \rVert_{C^{0,\alpha}}.
\end{equation}
Then if $\lvert t - t_0 \rvert$ is sufficiently small, the mapping $T_t$ is a contraction and hence by the Banach fixed point theorem there exists a fixed point for this equation, which solves the Dirichlet problem with the operator $L_t$. Therefore the Schauder estimate holds for a small neighbourhood around $t$ and hence $\Sigma$ is open and therefore $\Sigma = [0,1]$ (as $\Sigma$ cannot be the empty set). In particular, estimate \eqref{schauderestimate} is true for the case $t=1$, which is what we wanted to show. 
\end{proof}
\section{The normal derivative of the Weierstrass flow away from the boundary} \label{normalderivativeappendix}
We recall that in Example \ref{weierstrassexample} we constructed a flow (given in equations \eqref{weierstrass1} and \eqref{weierstrass2}) such that $\partial_n (u \cdot n)^2 \lvert_{\partial \Omega} \notin \mathcal{D}' (\partial \Omega)$. In this appendix we will extend the results of this example, in particular we will show the following theorem.
\begin{theorem}
Let $u \in C^{0,\alpha} (\Omega)$ be the Weierstrass flow introduced in equations \eqref{weierstrass1} and \eqref{weierstrass2} and consider test functions $\theta \in \mathcal{D} (\mathbb{T})$ such that
\begin{equation*}
\int_{\mathbb{T}} \theta(x) dx \not=0.
\end{equation*}
Then for $\alpha \in \left(0, \frac{1}{2} \right)$ we have that $\partial_n (u \cdot n)^2 (\cdot, y) \notin \mathcal{D}' (\mathbb{T})$ for any $y = \frac{j}{2^m}$, where $j = 1, 2, \ldots , 2^m - 1$ and $m \geq 1$. 
\end{theorem}
\begin{proof}
By reexamining the proofs in section \ref{examplesection}, one can check that for any $y > 0$ $U_{NR} (\cdot ; \theta)$ and $U_{RNR} (\cdot ; \theta)$ are $C^1 $ functions. 

We recall that $U_{RR}$ was defined by
\begin{equation}
U_{RR}(y;\theta)= \frac{1}{2} \int_{\mathbb{T}} \theta (x) dx \sum_{k=0 }^\infty 2^{-2\alpha k} (\sin(2^{k } \pi y)^2.
\end{equation}
Now we will consider the following difference quotient for some $y_1 = \frac{j}{2^m}$ (for some $m \geq 1$ and $j= 1, 2, \ldots, 2^m$)
\begin{equation*}
\frac{U_{RR} (y_1 + h; \theta) - U_{RR} (y_1 ; \theta)}{\lvert h \rvert} = \frac{1}{2 \lvert h \rvert} \int_{\mathbb{T}} \theta (x) dx \sum_{k=0 }^\infty 2^{-2\alpha k} \bigg[ (\sin(2^{k } \pi (y_1+h))^2 - (\sin(2^{k } \pi y_1))^2 \bigg].
\end{equation*}
We will first rewrite the difference quotient as
\begin{align*}
&\frac{U_{RR} (y_1 + h; \theta) - U_{RR} (y_1 ; \theta)}{\lvert h \rvert} = \frac{1}{4 \lvert h \rvert} \int_{\mathbb{T}} \theta (x) dx \sum_{k=0 }^\infty 2^{-2\alpha k} \bigg[ \cos (2^{k +1} \pi y_1) - \cos(2^{k +1} \pi (y_1 + h)) \bigg] \\
&= \frac{1}{2 \lvert h \rvert} \int_{\mathbb{T}} \theta (x) dx \sum_{k=0 }^\infty 2^{-2\alpha k} \sin ( 2^k \pi (2 y_1 + h) ) \sin (2^k \pi h).
\end{align*}
Now once again we select the sequence $h_n = 2^{-n}$, as we did in Example \ref{weierstrassexample}. Once again, we notice that $\sin (2^{k-n} \pi ) = 0$ for $k \geq n$, so we end up with the partial sum
\begin{align*}
&\frac{U_{RR} (y_1 + h_n; \theta) - U_{RR} (y_1 ; \theta)}{\lvert h_n \rvert} = 2^{n-1} \int_{\mathbb{T}} \theta (x) dx \sum_{k=0 }^{n-1} 2^{-2\alpha k} \sin ( 2^k \pi (2 y_1 + 2^{-n}) ) \sin (2^{k-n} \pi ) \\
&= 2^{n-1} \int_{\mathbb{T}} \theta (x) dx \sum_{k=0 }^{n-1} 2^{-2\alpha k} \bigg[ \sin ( 2^{k+1} \pi y_1) \cos (2^{k-n} \pi ) + \cos ( 2^{k+1} \pi y_1) \sin (2^{k-n} \pi ) \bigg] \sin (2^{k-n} \pi ).
\end{align*}
Now we substitute our choice for $y_1$. Since we are interested in the behaviour of $U_{RR}$ as $n \rightarrow \infty$ we may assume without loss of generality that $m \leq n$. Then we obtain that
\begingroup
\allowdisplaybreaks
\begin{align}
&\frac{U_{RR} (y_1 + h_n; \theta) - U_{RR} (y_1 ; \theta)}{\lvert h_n \rvert} = 2^{n-1} \int_{\mathbb{T}} \theta (x) dx \sum_{k=0 }^{n-1} 2^{-2\alpha k} \bigg[ \sin ( 2^{k+1-m} \pi j) \cos (2^{k-n} \pi ) \nonumber \\
&+ \cos ( 2^{k+1 - m} \pi j) \sin (2^{k-n} \pi ) \bigg] \sin (2^{k-n} \pi ) \nonumber \\
&= 2^{n-1} \int_{\mathbb{T}} \theta (x) dx \sum_{k=0 }^{m-2} \bigg[ 2^{-2\alpha k} \sin ( 2^{k+1-m} \pi j) \cos (2^{k-n} \pi ) \sin (2^{k-n} \pi ) \bigg] \label{sum1} \\
&+ 2^{n-1} \int_{\mathbb{T}} \theta (x) dx \sum_{k=0 }^{m-1} \bigg[ 2^{-2\alpha k} \cos ( 2^{k+1-m} \pi j) (\sin (2^{k-n} \pi ))^2 \bigg] \label{sum2} \\
&+ 2^{n-1} \int_{\mathbb{T}} \theta (x) dx \sum_{k=m  }^{n-1} \bigg[ 2^{-2\alpha k} (\sin (2^{k-n} \pi ))^2 \bigg]. \label{sum3}
\end{align}
\endgroup
We first investigate the third sum in line \eqref{sum3}, we estimate (using that for $x \in \big[0, \frac{\pi}{2} \big]$ we have that $\frac{2}{\pi} x \leq \sin (x) \leq x$)
\begingroup
\allowdisplaybreaks
\begin{align*}
&2^{n-1} \int_{\mathbb{T}} \theta (x) dx \sum_{k=m  }^{n-1} 2^{-2\alpha k} (\sin (2^{k-n} \pi ))^2 \geq 2^{n-1} \int_{\mathbb{T}} \theta (x) dx \sum_{k=m  }^{n-1} 2^{-2\alpha k} \cdot 2^{2k-2n+2} \\
&= 2^{-n+1} \int_{\mathbb{T}} \theta (x) dx \sum_{k=m}^{n-1} 2^{2(1-\alpha) k} = 2^{-n+1} \int_{\mathbb{T}} \theta (x) dx \bigg[ \frac{ 2^{2(1-\alpha)n} - 1}{2^{2(1-\alpha)} - 1} - \frac{2^{2(1-\alpha)m} - 1}{2^{2(1-\alpha)} - 1} \bigg] \\
&= \int_{\mathbb{T}} \theta(x) dx \cdot  \frac{ 2^{(1 - 2\alpha)n + 1} }{2^{2(1-\alpha)} - 1} + 2^{-n+1} \int_{\mathbb{T}} \theta (x) dx \bigg[ -\frac{  1}{2^{2(1-\alpha)} - 1} - \frac{2^{2(1-\alpha)m} - 1}{2^{2(1-\alpha)} - 1} \bigg].
\end{align*}
\endgroup
Now we need to show that the other two sums remain bounded, we find
\begingroup
\allowdisplaybreaks
\begin{align*}
&2^{n-1} \int_{\mathbb{T}} \theta (x) dx \sum_{k=0 }^{m-1} \bigg[ 2^{-2\alpha k} \cos ( 2^{k+1-m} \pi j) (\sin (2^{k-n} \pi ))^2 \bigg] \\
&\geq 2^{n-1} \int_{\mathbb{T}} \theta (x) dx \sum_{k=0 }^{m-1} \bigg[ 2^{-2\alpha k} \min \{ \cos ( 2^{k+1-m} \pi j) 2^{2k-2n+2}, 2^{-2\alpha k} \min \{ \cos ( 2^{k+1-m} \pi j) 2^{2k-2n} \pi^2  \} \bigg] \\
&= 2^{-n-1} \int_{\mathbb{T}} \theta (x) dx \sum_{k=0 }^{m-1} \bigg[ 2^{-2\alpha k} \min \{ \cos ( 2^{k+1-m} \pi j) 2^{2k+2}, 2^{-2\alpha k} \min \{ \cos ( 2^{k+1-m} \pi j) 2^{2k} \pi^2  \} \bigg], \\
&2^{n-1} \int_{\mathbb{T}} \theta (x) dx \sum_{k=0 }^{m-2} \bigg[ 2^{-2\alpha k} \sin ( 2^{k+1-m} \pi j) \cos (2^{k-n} \pi ) \sin (2^{k-n} \pi ) \bigg] \\
&\geq 2^{n-1} \int_{\mathbb{T}} \theta (x) dx \sum_{k=0 }^{m-2} \bigg[ 2^{-2\alpha k} \min \{ \sin ( 2^{k+1-m} \pi j) \cos (2^{k-n} \pi ) 2^{k-n+1}, \sin ( 2^{k+1-m} \pi j) \cos (2^{k-n} \pi )2^{k-n} \pi \} \bigg] \\
&= 2^{-1} \int_{\mathbb{T}} \theta (x) dx \sum_{k=0 }^{m-2} \bigg[ 2^{-2\alpha k} \min \{ \sin ( 2^{k+1-m} \pi j) \cos (2^{k-n} \pi ) 2^{k+1}, \sin ( 2^{k+1-m} \pi j) \cos (2^{k-n} \pi )2^{k} \pi \} \bigg].
\end{align*}
\endgroup
Therefore both sums can be either bounded from below independent of $n$ or they go to zero as $n \rightarrow \infty$. Because the sum in equation \eqref{sum3} is going to infinity, we conclude that if $\alpha < \frac{1}{2}$ we have that
\begin{equation}
\liminf_{n \rightarrow \infty} \bigg\lvert \frac{U_{RR} (y_1 + h_n; \theta) - U_{RR} (y_1 ; \theta)}{ h_n } \bigg\rvert = \infty,
\end{equation}
for points $y_1$ of the form $y_1 = \frac{j}{2^m}$ for $j=1, \ldots, 2^{m-1}$ and $m \geq 1$. We conclude as a result that $\partial_y u_2^2 (\cdot, y)$ cannot be defined as a distribution (i.e. as an element of $\mathcal{D}' (\mathbb{T})$) for a dense set of points $y \in [0,1]$.

In the case $\alpha = \frac{1}{2}$ we have that
\begin{align*}
&\liminf_{n \rightarrow \infty} \bigg\lvert \frac{U_{RR} (y_1 + h_n; \theta) - U_{RR} (y_1 ; \theta)}{ h_n } \bigg\rvert \geq \bigg\lvert \int_{\mathbb{T}} \theta (x) dx \bigg\rvert  \\
&\cdot \bigg\lvert 2 + \sum_{k=0 }^{m-2} \bigg[ 2^{-2\alpha k - 1} \min \big\{ \sin ( 2^{k+1-m} \pi j) \cos (2^{k} \pi ) 2^{k+1}, \sin ( 2^{k+1-m} \pi j) \cos (2^{k} \pi )2^{k} \pi \big\} \bigg] \bigg\rvert,
\end{align*}
it therefore depends on the values of $j$ and $m$ whether this lower bound is nonzero.
\end{proof}
\end{appendices}

\bibliographystyle{abbrv}
{\footnotesize \bibliography{pressure_boundary_regularity}}

\end{document}